\newtheorem{theorem}{Theorem}[section]
\newtheorem{lemma}[theorem]{Lemma}
\newtheorem{definition}[theorem]{Definition}
\newtheorem{remark}[theorem]{Remark}
\numberwithin{equation}{section}
\def\rn{{\mathbb R^n}}
\def\P{{\mathscr P}}
\def\pp{{p(\cdot)}}
\def\Xint#1{\mathchoice
   {\XXint\displaystyle\textstyle{#1}}%
   {\XXint\textstyle\scriptstyle{#1}}%
   {\XXint\scriptstyle\scriptscriptstyle{#1}}%
   {\XXint\scriptscriptstyle\scriptscriptstyle{#1}}%
   \!\int}
\def\XXint#1#2#3{{\setbox0=\hbox{$#1{#2#3}{\int}$}
     \vcenter{\hbox{$#2#3$}}\kern-.5\wd0}}
\def\avgint{\Xint-}
\def\Z{\mathbb{Z}}
\begin{document}
\title[Fractional maximal operators on...]
{\bf Fractional maximal operators on weighted variable Lebesgue spaces over the spaces of homogeneous type}

\author[X. Cen]{Xi Cen}
\address{Xi Cen\\
	School of Mathematics and Physics\\
	Southwest University of Science and Technology\\
	Mianyang 621010 \\
	People's Republic of China}\email{xicenmath@gmail.com}

\date{April 23, 2024.}

\subjclass[2020]{42B25, 42B35.}

\keywords{Weights; Variable exponents; Fractional maximal operators; spaces of homogeneous type.}


\begin{abstract} 
Let $(X,d,\mu)$ is a space of homogeneous type, we establish a new class of fractional-type variable weights $A_{p(\cdot), q(\cdot)}(X)$. Then, we get the new weighted strong-type and weak-type characterizations for fractional maximal operators $M_\eta$ on weighted variable Lebesgue spaces over $(X,d,\mu)$. This study generalizes the results by Cruz-Uribe--Fiorenza--Neugebauer \cite{Cruz2012} (2012), 
Bernardis--Dalmasso--Pradolini\cite{Ber2014} (2014), Cruz-Uribe--Shukla\cite{Cruz2018} (2018), and Cruz-Uribe--Cummings\cite{Cruz2022} (2022).
\end{abstract}

\maketitle
\tableofcontents
\section{\bf Introduction}
In this paper, we focus on the boundedness of fractional maximal operators $M_\eta$ on weighted Lebesgue spaces with variable exponents over the spaces of homogeneous type $L^{p(\cdot)}(X,\omega)$. This work is based on the theory of boundedness on weighted Lebesgue Spaces $L^{p(\cdot)}(\omega)$ and some recent work by Cruz-Uribe et al. (see Theorems {\bf  A-G} below). The theory of maximal operators was first studied by Muckenhoupt et al. \cite{Muc1972,Muc1974}, and a series of far-reaching results were obtained. Since then, the weighted theory of maximal operators can be regarded as the generalization of the work of Muckenhoupt et al.

$(\rn,|\cdot|,dx)$ is a special case of the spaces of the homogeneous type, of which we give some definitions and properties as follows.

\begin{definition}
For a positive function $d: X \times X \rightarrow [0, \infty)$, $X$ is a set, the quasi-metric space $(X, d)$ satisfies the following conditions:
\begin{enumerate}
    \item  When $x=y$, $d(x, y)=0$.
    \item $d(x, y)=d(y, x)$ for all $x, y \in X$.
    \item  For all $x, y, z \in X$, there is a constant $A_0 \geq 1$ such that $d(x, y) \leq A_0(d(x, z)+d(z, y))$.
\end{enumerate}
\end{definition}

\begin{definition}
 Let $\mu$ be a measure of a space $X$. For a quasi-metric ball $B(x,  r)$ and any $r>0$, if $\mu$ satisfies doubling condition, then there exists a doubling constant $C_\mu \geq 1$, such that  
$$
0<\mu(B(x, 2 r)) \leq C_\mu \mu(B(x, r))<\infty.
$$
\end{definition}

\begin{definition}
For a non-empty set $X$ with a qusi-metric $d$, a triple $(X, d, \mu)$ is said to be a space of homogeneous type if $\mu$ is a regular measure which satisfies doubling condition on the $\sigma$-algebra, generated by open sets and quasi-metric balls.
\end{definition}

Considering a measurable function \(p: E \rightarrow [1, \infty)\) on a subset \(E \subseteq X\), we define \(p_-(E) = \text{ess inf}_{x \in E} p(x)\) and \(p_+(E) = \text{ess sup}_{x \in E} p(x)\), with \(p_-\) and \(p_+\) specifically denoting these quantities over the entire space \(X\). Furthermore, we introduce some sets of measurable functions based on these definitions.
\begin{align*}
	&{\P}\left( E \right) = \{ p( \cdot ) :{\rm{E}} \to \left[ {1,\infty } \right) \text{ is measurable: } 1 < {p_ - }(E) \le {p_{\rm{ + }}}(E) < \infty \};\\
	&{{\P}_1}\left( {E} \right) = \{ p( \cdot ) :{\rm{E}} \to \left[ {1,\infty }\right) \text{ is measurable: } 1 \le {p_ - }(E) \le {p_{\rm{ + }}}(E) < \infty \};\\
	&{{\P}_0}\left( {E} \right) = \{ p( \cdot ) :{\rm{E}} \to \left[ {0,\infty }\right) \text{ is measurable: } 0 < {p_ - }(E) \le {p_{\rm{ + }}}(E) < \infty \}.
\end{align*}
Obviously, ${\P}\left( E \right) \subseteq {{\P}_1}\left( {E} \right) \subseteq {{\P}_0}\left( {E} \right)$. When $E=X$,  we write $\mathscr{P}(X)$ by ${\P}$ for convenience.

\begin{definition}
Let $1\leq p_-\leq p_+\leq \infty$, the variable exponent Lebesgue spaces with Luxemburg norm is defined as
\begin{equation*}
L_{}^{p( \cdot )}(X) = \{ f:{\left\| f \right\|_{L_{}^{p( \cdot )}(X)}}: = \inf \{ \lambda  > 0:{\rho _{p( \cdot )}}(\frac{f}{\lambda }) \le 1\}  < \infty \},
\end{equation*}
where ${\rho _{p( \cdot )}}(f) = \int_X {{{\left| {f(x)} \right|}^{p(x)}}dx}+\|f\|_{L^{\infty}(X_\infty)}.$ We always abbreviate $\|\cdot\|_{L^{p(\cdot)}(X)}$ to $\|\cdot\|_{p(\cdot)}$. For every ball $B \subseteq X$, if $\rho_\pp(f\chi_B) < \infty$, then $f$ is said to be locally $\pp$-integrable.
\end{definition}
 
In fact, the above spaces are Banach spaces (precisely, ball Banach function spaces), to which readers can refer \cite{red}.

Let $\omega$ be a weight function on $X$. The variable exponent weighted Lebesgue spaces are defined by
\begin{equation*}
	L_{}^{p( \cdot )}(X,\omega) = \{ f:{\left\| f \right\|_{L_{}^{p( \cdot )}(X,\omega)}} := {\left\| {\omega f} \right\|_{L_{}^{p( \cdot )}(X)}} < \infty \}.
\end{equation*}

\begin{definition}
For any $x, y \in X$ and $d(x,y)<\frac{1}{2}$, we say $p(\cdot) \in LH_{0}$, if  	\begin{equation}\label{LH0}
		|p(x)-p(y)| \lesssim \frac{1}{\log (e+1 /d(x,y))}.
	\end{equation}
We say $p(\cdot) \in LH_{\infty}$ (respect to a point $x_0 \in X$), if there exists $p_{\infty} \in X$, for any $x \in X$,
	\begin{equation}\label{LHinfty}
		\left|p(x)-p_{\infty}\right| \lesssim \frac{1}{\log (e+d(x,x_0))} .
	\end{equation}
We denote the globally log-H\"{o}lder continuous functions by $LH=LH_{0} \cap LH_{\infty}$.
\end{definition}
According to the above definition, it seems to relate to the choice of the point $x_0$. However, through \cite{Ad2015}, we can know that such a choice is immaterial.
\begin{lemma}\label{LHxy}
     For any $y_0\in X$, if $p(\cdot) \in L H_{\infty}$ with respect to $x_0\in X$, then $p(\cdot) \in L H_{\infty}$ with respect to $y_0$.
\end{lemma}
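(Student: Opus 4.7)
The plan is to reduce the lemma to a comparison of the quantities $\log(e+d(x,x_0))$ and $\log(e+d(x,y_0))$, and to show that these are comparable up to constants depending only on the quasi-metric constant $A_0$ and the fixed distance $d(x_0,y_0)$. Once that comparison is in hand, plugging it into the assumed bound $|p(x)-p_\infty|\lesssim 1/\log(e+d(x,x_0))$ immediately yields the same type of bound with $y_0$ in place of $x_0$, which is the desired conclusion.

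First I would apply the quasi-triangle inequality from Definition 1.1(iii) in both directions, obtaining
\[
d(x,y_0)\le A_0\bigl(d(x,x_0)+d(x_0,y_0)\bigr),\qquad d(x,x_0)\le A_0\bigl(d(x,y_0)+d(y_0,x_0)\bigr).
\]
From the first inequality I would derive the multiplicative estimate
\[
e+d(x,y_0)\le A_0\bigl(e+d(x,x_0)\bigr)\left(1+\tfrac{d(x_0,y_0)}{e}\right)=C_1\bigl(e+d(x,x_0)\bigr),
\]
using that $e+d(x,x_0)\ge e$. Taking logarithms yields $\log(e+d(x,y_0))\le \log(e+d(x,x_0))+\log C_1$, and symmetrically $\log(e+d(x,x_0))\le \log(e+d(x,y_0))+\log C_2$, where $C_1,C_2$ depend only on $A_0$ and $d(x_0,y_0)$.

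Next I would split into two cases depending on whether $d(x,y_0)$ is large or bounded. If $d(x,y_0)\le R$ for a threshold $R$ depending on $A_0$ and $d(x_0,y_0)$, then $1/\log(e+d(x,y_0))$ is bounded below by a positive constant, while $|p(x)-p_\infty|$ is bounded above by $p_+(X)+\sup|p_\infty|$, so the inequality \eqref{LHinfty} with respect to $y_0$ holds trivially by enlarging the implicit constant. If $d(x,y_0)>R$, I would choose $R$ large enough (depending on $\log C_2$) so that the additive comparison $\log(e+d(x,x_0))\ge \log(e+d(x,y_0))-\log C_2$ upgrades to a multiplicative one, namely $\log(e+d(x,x_0))\ge \tfrac{1}{2}\log(e+d(x,y_0))$. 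Substituting this into the hypothesis then gives
\[
|p(x)-p_\infty|\lesssim \frac{1}{\log(e+d(x,x_0))}\le \frac{2}{\log(e+d(x,y_0))},
\]
completing the proof.

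The argument is essentially routine; there is no serious obstacle. The only mildly delicate point is converting the additive comparison of logarithms into a multiplicative one on the range $d(x,y_0)>R$, which is why a case split on the size of $d(x,y_0)$ is convenient. Everything else follows formally from the quasi-triangle inequality and the fact that $d(x_0,y_0)$ is a fixed finite constant.
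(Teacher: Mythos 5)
Your argument is correct and is the standard one; note that the paper itself gives no proof of this lemma, only the citation to [Ad2015], and your write-up supplies essentially the argument used there: the quasi-triangle inequality gives $e+d(x,y_0)\le C_1\bigl(e+d(x,x_0)\bigr)$ with $C_1$ depending only on $A_0$ and $d(x_0,y_0)$, and hence $\log(e+d(x,x_0))\gtrsim\log(e+d(x,y_0))$ after disposing of the bounded range of $d(x,y_0)$, where the hypothesis already forces $|p(x)-p_\infty|\le C$ uniformly (so you do not even need to invoke $p_+(X)$). The only blemish is cosmetic: in the second case you invoke the lower bound $\log(e+d(x,x_0))\ge\log(e+d(x,y_0))-\log C_2$, but that inequality comes from your \emph{first} displayed comparison and so should carry the constant $C_1$, not $C_2$; the substance is unaffected.
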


If $x_0$ is not chosen definitely, we always suppose that $X$ has an arbitrary given point $x_0$.

The fractional maximal operator $M_\eta$ on the spaces of homogeneous type is defined as
$$
M_\eta f(x)=\sup _{B \subseteq X}\mu(B)^{\eta-1} \int_B|f(y)| d \mu \cdot {\chi _B}(x).
$$
When $X=\rn$, we take $\eta=\frac{\alpha}{n}$ and write $M_\eta$ by $M_\alpha$.
We now give the definition of fractional-type weights \(A_{p(\cdot),q(\cdot)}(X)\) and present some foundational results.
\begin{definition}\label{def.Apq}
Let $p(\cdot), q(\cdot) \in \mathscr{P}_1$ and $\frac{1}{p(\cdot)}-\frac{1}{q(\cdot)}=\eta \in[0,1)$. We say a weight $\omega \in A_{p(\cdot), q(\cdot)}(X)$, if 
    $$
    [\omega]_{A_{p(\cdot), q(\cdot)}(X)}:=\sup _{B \subseteq X} \mu(B)^{\eta-1}\left\|\omega \chi_B\right\|_{{q(\cdot)}}\left\|\omega^{-1} \chi_B\right\|_{{p^{\prime}(\cdot)}}<\infty.
    $$
\end{definition}

\begin{figure}[!h]
	\begin{center}
		\begin{tikzpicture}
\node (1) at(0,0) {$A_p(\rn)$};
\node (2) at(3,3) {$A_{p}(X)$};
\node (3) at(3,-3) {$A_{p, q}(\rn)$};
\node (4) at(6,3) {$A_{p(\cdot)}(X)$};
\node (5) at(6,-3) {$A_{p,q}(X)$};
\node (6) at(6,0) {$A_{p(\cdot), q(\cdot)}(\rn)$};
\node (7) at(9,0) {$A_{p(\cdot), q(\cdot)}(X)$};
\node (8) at(3,0) {$A_{p(\cdot)}(\rn)$};
			\draw[->] (1)--(2);
			\draw[->] (1)--(3);
   		\draw[->] (1)--(8);
			\draw[->] (2)--(4);
			\draw[->] (2)--(5);
			\draw[->] (3)--(5);
            \draw[->] (3)--(6);
			\draw[->] (8)--(4);
            \draw[->] (8)--(6);
            
			\draw[->] (4)--(7);
			\draw[->] (5)--(7);
            \draw[->] (6)--(7);
		\end{tikzpicture}
		\end{center}
\caption{The relationships between weights.}\label{figure1}
	\end{figure}
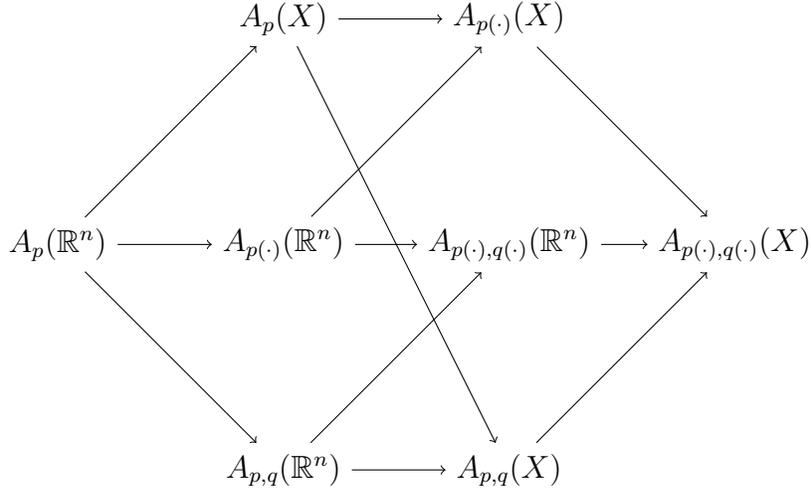

\begin{remark}
The above discussion introduce a broader category of weights, implying that the $A_{p(\cdot),q(\cdot)}$ set can be deduced as many particular instances under specific conditions $(Fig. \ref{figure1})$.
    \begin{enumerate}

\item If $\eta=0$, then $A_{p(\cdot), q(\cdot)}(X)=A_{p(\cdot)}(X)$ introduced in \cite{Cruz2022}.

\item If $p(\cdot)\equiv p$, then $A_{p(\cdot), q(\cdot)}(X)=A_{p,q}(X)$. 

\item If $p(\cdot)\equiv p$ and $\eta=0$, then $A_{p(\cdot), q(\cdot)}(X)=A_{p}(X)$. 

\item If $X=\rn$, then $A_{p(\cdot), q(\cdot)}(X)=A_{p(\cdot),q(\cdot)}(\rn)$ introduced in \cite{Ber2014}.

\item If $X=\rn$ and $\eta=0$, then $A_{p(\cdot), q(\cdot)}(X)=A_{p(\cdot)}(\rn)$ introduced in \cite{Cruz2012}.

\item If $X=\rn$ and $p(\cdot)\equiv p$, then $A_{p(\cdot), q(\cdot)}(X)=A_{p,q}(\rn)$ introduced in \cite{Muc1974}.

\item If $X=\rn$, $p(\cdot)\equiv p$, and $\eta=0$, then $A_{p(\cdot), q(\cdot)}(X)=A_{p}(\rn)$ introduced in \cite{Muc1972}.
    \end{enumerate}
\end{remark}

In this paper, we always abbreviate $A_{p(\cdot), q(\cdot)}(X)$ to $A_{p(\cdot), q(\cdot)}$, $A_{p(\cdot)}(X)$ to $A_{p(\cdot)}$, and $A_{p}(X)$ to $A_{p}$. It is easy to observe that
$$
\left[\omega^{-1}\right]_{A_{q^{\prime}(\cdot), p^{\prime}(\cdot)}}=[\omega]_{A_{p(\cdot), q(\cdot)}}.
$$
By H\"older's inequality, we have
\begin{align}
{\left[ \omega  \right]_{{A_{q( \cdot )}}}} \le& {\left[ \omega  \right]_{{A_{p( \cdot ),q( \cdot )}}}},\label{cha.1}\\ 
{\left[ {{\omega ^{ - 1}}} \right]_{{A_{p'( \cdot )}}}} \le& {\left[ \omega  \right]_{{A_{p( \cdot ),q( \cdot )}}}}.\label{cha.2}
\end{align}


We introduce some background and motivation regarding the main results of this paper.

Since 1972 and 1974, Muckenhoupt et al. \cite{Muc1972,Muc1974} studied the characterization of $A_{p}(\rn)$ and $A_{p,q}(\rn)$ by maximal operators $M$ and fractional maximal operators $M_{\alpha}$ respectively. Many people began to pay attention to the relationship between the characterization of weights and maximal operators.

In 2012, Cruz-Uribe, Fiorenza, and Neugebauer \cite{Cruz2012} firstly studied the characterization of $A_{p(\cdot)}(\rn)$ by maximal operators $M$.

\, \hspace{-20pt}{\bf Theorem A}(\cite{Cruz2012}). {\it\
	Let $p(\cdot) \in \P \cap L H$ and $\omega$ is a weight. Then $M$ is bounded on $L^{p(\cdot)}(\omega)$ if and only if $\omega \in A_{p(\cdot)}(\rn)$.
}

\, \hspace{-20pt}{\bf Theorem B}(\cite{Cruz2012}). {\it\
	Let $p(\cdot) \in \mathscr{P}_1 \cap L H$ and $\omega$ is a weight. Then
	$M$ is bounded from $L^{p(\cdot)}(\omega)$ to $WL^{p(\cdot)}(\omega)$
	if and only if $\omega \in A_{p(\cdot)}(\rn)$.
}

In 2014, Bernardis, Dalmasso, and Pradolini \cite{Ber2014} proved the characterizations for $A_{p(\cdot),q(\cdot)}(\rn)$ by fractional maximal operators $M_{\alpha}$ as follows.

\, \hspace{-20pt}{\bf Theorem C}(\cite{Ber2014}).{\it\
Let $p(\cdot), q(\cdot) \in \mathscr{P} \cap L H$, $ \frac{1}{p(\cdot)}-\frac{1}{q(\cdot)}=\frac{\alpha }{n} \in[0,1)$, and $\omega$ is a weight. Then $M_{\alpha}$ is bounded from $L^{p(\cdot)}(\omega)$ to $L^{q(\cdot)}(\omega)$ if and only if $\omega \in A_{p(\cdot), q(\cdot)}(\rn)$.
}

In 2018, Cruz-Uribe and Shukla \cite{Cruz2018} obtained the following results, which solve the problem of boundedness of fractional maximal operators on variable Lebesgue spaces over the spaces of homogeneous type.

\, \hspace{-20pt}{\bf Theorem D}(\cite{Cruz2018}).{\it\
Let $p(\cdot), q(\cdot) \in \mathscr{P} \cap L H$ and $ \frac{1}{p(\cdot)}-\frac{1}{q(\cdot)}=\eta \in[0,1)$ Then $M_\eta$ is bounded from $L^{p(\cdot)}(X)$ to $L^{q(\cdot)}(X)$.

Additionally, if \(\mu(X) < +\infty\), the requirement \( p(\cdot) \in L H\) can be substituted with \(p(\cdot) \in L H_0\).
}

\, \hspace{-20pt}{\bf Theorem E}(\cite{Cruz2018}). {\it\
Let $p(\cdot), q(\cdot) \in \mathscr{P}_1 \cap L H$ and $ \frac{1}{p(\cdot)}-\frac{1}{q(\cdot)}=\eta \in[0,1)$ Then $M_\eta$ is bounded from $L^{p(\cdot)}(X)$ to $WL^{q(\cdot)}(X)$.

Moreover, if \(\mu(X) < +\infty\), the condition \( p(\cdot) \in L H\) may be substituted with \(p(\cdot) \in L H_0\).
}

In 2022, Cruz-Uribe and Cummings \cite{Cruz2022} demonstrated the following characterizations for $A_{p(\cdot)}(X)$ by maximal operators $M$.

\, \hspace{-20pt}{\bf Theorem F}(\cite{Cruz2022}).\label{VApp1} {\it\
Let $p(\cdot) \in \P \cap L H$ and $\omega$ is a weight. Then $M$ is bounded on $L^{p(\cdot)}(X, \omega)$ if and only if $\omega \in A_{p(\cdot)}(X)$.
}

\, \hspace{-20pt}{\bf Theorem G}(\cite{Cruz2022}).\label{VApp2} {\it\
Let $p(\cdot) \in \mathscr{P}_1 \cap L H$ and $\omega$ is a weight. Then
$M$ is bounded from $L^{p(\cdot)}(X, \omega)$ to $WL^{p(\cdot)}(X, \omega)$
if and only if $\omega \in A_{p(\cdot)}(X)$.
}

Inspired by the above, it is natural to consider whether $A_{p(\cdot),q(\cdot)}(X)$ can be characterized by fractional maximal operators $M_\eta$? The answer to this question is yes. To be precise, we can draw the following conclusions.

\begin{theorem}\label{mainthm_0}
Let $p(\cdot), q(\cdot) \in \mathscr{P} \cap L H$, $ \frac{1}{p(\cdot)}-\frac{1}{q(\cdot)}=\eta \in[0,1)$, and $\omega$ is a weight. Then $M_\eta$ is bounded from $L^{p(\cdot)}(X, \omega)$ to $L^{q(\cdot)}(X, \omega)$ if and only if $\omega \in A_{p(\cdot), q(\cdot)}(X)$.
\end{theorem}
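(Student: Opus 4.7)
The statement is an if-and-only-if, and the two directions are of very different difficulty.

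For \emph{necessity}, I run a standard testing/duality argument. Fix a ball $B$ and a measurable $g$ with $\|g\chi_B\|_{p(\cdot)}\le 1$, and test the presumed bound on $f=g\omega^{-1}\chi_B$. Since $M_\eta f(x)\ge\mu(B)^{\eta-1}\int_B g\omega^{-1}\,d\mu$ for every $x\in B$, multiplying by $\omega$ and taking $L^{q(\cdot)}$ norms gives
$$\mu(B)^{\eta-1}\|\omega\chi_B\|_{q(\cdot)}\int_B g\omega^{-1}\,d\mu\ \le\ C\|g\chi_B\|_{p(\cdot)}\ \le\ C.$$
Taking the supremum over admissible $g$ and invoking the norm-conjugate duality $\|\omega^{-1}\chi_B\|_{p'(\cdot)}\approx\sup_g\int_B g\omega^{-1}\,d\mu$ for variable Lebesgue spaces produces the $A_{p(\cdot),q(\cdot)}(X)$ condition.

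For \emph{sufficiency}, my plan is a dyadic Calder\'on--Zygmund argument adapted to $(X,d,\mu)$. First, invoke the Hyt\"onen--Kairema adjacent dyadic systems to dominate $M_\eta f$ pointwise by a finite sum of dyadic fractional maximal operators $M_\eta^{\mathscr{D}}$, reducing the problem to a single dyadic grid. Second, for each $k\in\mathbb{Z}$ decompose $\{M_\eta^{\mathscr{D}}f>2^k\}$ as a disjoint union of maximal dyadic cubes $Q^k_j$ satisfying $\mu(Q^k_j)^{\eta-1}\int_{Q^k_j}|f|\,d\mu>2^k$. Third, on each $Q^k_j$ chain the generalized H\"older inequality for variable Lebesgue spaces with the defining inequality of $A_{p(\cdot),q(\cdot)}$,
\begin{align*}
\int_{Q^k_j}|f|\,d\mu &\lesssim \|f\omega\chi_{Q^k_j}\|_{p(\cdot)}\|\omega^{-1}\chi_{Q^k_j}\|_{p'(\cdot)},\\
\mu(Q^k_j)^{\eta-1}\|\omega\chi_{Q^k_j}\|_{q(\cdot)}\|\omega^{-1}\chi_{Q^k_j}\|_{p'(\cdot)} &\le [\omega]_{A_{p(\cdot),q(\cdot)}},
\end{align*}
to obtain the local estimate $2^k\|\omega\chi_{Q^k_j}\|_{q(\cdot)}\lesssim\|f\omega\chi_{Q^k_j}\|_{p(\cdot)}$. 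Fourth, assemble these estimates, using the bounded overlap of the $Q^k_j$ across levels, into a modular bound on $\|\omega M_\eta f\|_{q(\cdot)}$ that I then upgrade to the desired norm bound.

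The chief obstacle will be the last step: because $q(\cdot)$ is genuinely variable one cannot simply raise the local estimate to the $q(\cdot)$-th power and sum. I would follow the template of Cruz-Uribe--Shukla (Theorem D) and Cruz-Uribe--Cummings (Theorem F), using the log-H\"older hypotheses $LH_0$ and $LH_\infty$ to freeze $p(\cdot)$ and $q(\cdot)$ to constants on each $Q^k_j$ while explicitly controlling the error terms of the form $\mu(Q^k_j)^{p_+(Q^k_j)-p_-(Q^k_j)}$; a reverse H\"older--type self-improvement of the $A_{p(\cdot),q(\cdot)}$ condition is typically needed at this point. A further difficulty, absent from the Euclidean setting, is that $(X,d,\mu)$ carries no translation or dilation group, so all geometric manipulations have to proceed through the doubling property of $\mu$ and the adjacent dyadic structure rather than a single ambient lattice.
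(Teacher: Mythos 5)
Your necessity argument is correct for the strong-type statement and is genuinely more direct than the paper's. Testing on $f=g\omega^{-1}\chi_B$ and invoking the norm-conjugate formula $\|\omega^{-1}\chi_B\|_{p'(\cdot)}\approx\sup_g\int_B g\omega^{-1}\,d\mu$ does yield the $A_{p(\cdot),q(\cdot)}$ condition in a few lines. The paper instead derives necessity from the \emph{weak}-type bound (so that one proof serves both Theorem 3.1 and Theorem 3.2); under that weaker hypothesis your duality step is unavailable, which is why the paper goes through the case analysis on $\|\omega^{-1}\chi_B\|_{p'(\cdot)}$, the splitting of $B$ into $\{p'<\infty\}$ and $\{p'=\infty\}$, the explicit test functions $\omega^{-p'(\cdot)}\lambda^{1-p'(\cdot)}\chi_{B_R}$, and the perturbation $\omega_\epsilon=\omega+\epsilon$. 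What you lose relative to the paper is only that your argument does not establish the necessity half of Theorem 3.2; for Theorem 3.1 itself it is a clean shortcut.

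The sufficiency direction, however, has a real gap: everything after your ``third'' step is the actual content of the theorem, and your plan for it is not the mechanism that works. Two specific problems. First, ``bounded overlap of the $Q^k_j$ across levels'' is false in the relevant sense --- the Calder\'on--Zygmund cubes at different heights $a^k$ are nested, not finitely overlapping. The correct device is to pass to the pairwise disjoint sets $E^k_j=Q^k_j\setminus\{M^{\mathcal D}_\eta f>a^{k+1}\}$ and to know that $\sigma(E^k_j)\approx\sigma(Q^k_j)$ for $\sigma=\omega^{-p'(\cdot)}$; this in turn requires proving that $\sigma$ and $W=\omega^{q(\cdot)}$ lie in $A_\infty$ (the paper's Lemma \ref{Apq_Ainfty}), which is itself a nontrivial consequence of $A_{p(\cdot),q(\cdot)}$ plus $LH$ and is nowhere in your outline. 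Second, ``freezing'' the exponents on each $Q^k_j$ cannot be done uniformly: the paper must split $f=f_1+f_2$ with $f_1=f\chi_{\{f\sigma^{-1}>1\}}$, handle $f_1$ by raising to the power $p(\cdot)/p_-(Q^k_j)$ and dominating by the weighted dyadic maximal operator $M^{\mathcal D}_\sigma$ on $L^{p_-}(\sigma)$ (Lemma \ref{M.eta.bound}), and handle $f_2$ by a far-field decomposition of the index set into $\mathscr F,\mathscr G,\mathscr H$ relative to a base cube $Q_0$, using $LH_\infty$ and the decay integral $\int_X (e+d(x_0,x))^{-tq_-}\,d\mu(x)$ to control the errors. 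Your appeal to ``a reverse H\"older--type self-improvement of $A_{p(\cdot),q(\cdot)}$'' does not substitute for these steps and is not what the cited template actually uses. The finite-measure case $\mu(X)<\infty$, where the Calder\'on--Zygmund decomposition only exists above the height $\lambda_0$, is also unaddressed. As written, the sufficiency half is a correct identification of the architecture but not a proof.
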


\begin{theorem}\label{mainthm_1}
Let $p(\cdot), q(\cdot) \in \mathscr{P} \cap L H$, $ \frac{1}{p(\cdot)}-\frac{1}{q(\cdot)}=\eta \in[0,1)$, and $\omega$ is a weight. Then
$M_\eta$ is bounded from $L^{p(\cdot)}(X, \omega)$ to $WL^{q(\cdot)}(X, \omega)$
if and only if $\omega \in A_{p(\cdot), q(\cdot)}(X)$.
\end{theorem}

\begin{remark}
It is obvious that Theorems \ref{mainthm_0} and \ref{mainthm_1} generalizes Theorems {\bf  A-G}. 
\end{remark}
We still need introduce some notations which will be used in this paper.

For some positive constant $C$ independent of appropriate parameters, $A\lesssim B$ means that $A\leq CB$ and $ A \approx B$ means that $A\lesssim B$ and $B\lesssim A$. What's more $A \lesssim_{\alpha ,\beta } B$ means that $A\leq C_{\alpha,\beta}B$, where $C_{\alpha,\beta}$ is dependent on $\alpha,\beta$.
Given an open set $E \subseteq {\rn}$ and a measurable function $p( \cdot ): {\rm{E}} \to \left[ {1,\infty } \right)$, $p'( \cdot )$ is the conjugate exponent defined by $p'( \cdot ) = \frac{{p( \cdot )}}{{p( \cdot ) - 1}}$. 
A weight is defined as a locally integrable function $\omega: X \rightarrow [0, \infty]$ satisfying $0 < \omega(x) < \infty$ for almost every $x \in X$. For a given weight $\omega$, its associated measure is established as $d\omega(x) = \omega(x) d\mu(x)$.For a subset $E\subseteq X$, the weighted average integral of a function $f$ is represented by
$$
\avgint_E f(x) d \omega=\frac{1}{\omega(E)} \int_E f(x) \omega(x) d \mu .
$$

The structure of this paper is as follows.
In Section \ref{pre}, we give some lemmas for variable Lebesgue spaces, weights, and dyadic analysis respectively, which play a important roles for the proof of our main theorems. In Section \ref{proof}, we prove Theorems \ref{mainthm_0} and \ref{mainthm_1}.

\section{\bf Preliminaries}\label{pre}

\subsection{Variable Lebesgue spaces}\label{2.1}
~

This subsection includes some foundational lemmas of variable Lebesgue spaces over the spaces of the homogeneous type. The first lemma is called "Lower Mass Bound".

\begin{lemma}[\cite{Cruz2022}, Lemma 2.1]\label{LMB.}
	For all $0 < r < R$ and any $y \in B(x, R)$, there exists a positive constant $C=C_{X}$, such that
	$$
	\frac{\mu(B(y, r))}{\mu(B(x, R))} \geq C\left(\frac{r}{R}\right)^{\log _2 C_\mu}.
	$$
\end{lemma}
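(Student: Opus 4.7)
The plan is to exploit the doubling condition iteratively, starting from the ball $B(y,r)$ and expanding it (roughly) geometrically until it engulfs $B(x,R)$, then tracking how many doublings this requires.

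First I would observe that since $y \in B(x,R)$, the quasi-triangle inequality gives, for any $z \in B(x,R)$, the estimate $d(y,z) \le A_0(d(y,x) + d(x,z)) < 2A_0 R$, so that $B(x,R) \subseteq B(y, 2A_0 R)$. This reduces everything to comparing $\mu(B(y,r))$ to $\mu(B(y, 2A_0 R))$, i.e.\ two concentric balls centered at the same point $y$.

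Next I would pick the smallest non-negative integer $k$ satisfying $2^k r \ge 2A_0 R$; that is, $k = \lceil \log_2(2A_0 R/r) \rceil$, which in particular obeys $k \le \log_2(2A_0 R/r) + 1$. Iterating the doubling condition $k$ times yields
\begin{equation*}
\mu(B(y, 2A_0 R)) \le \mu(B(y, 2^k r)) \le C_\mu^{\,k}\, \mu(B(y, r)).
\end{equation*}
Combining with the inclusion from the first step produces $\mu(B(x,R)) \le C_\mu^{\,k} \mu(B(y,r))$, so
\begin{equation*}
\frac{\mu(B(y,r))}{\mu(B(x,R))} \ge C_\mu^{-k} \ge C_\mu^{-1} \left(\frac{2A_0 R}{r}\right)^{-\log_2 C_\mu} = \frac{1}{C_\mu\, (2A_0)^{\log_2 C_\mu}} \left(\frac{r}{R}\right)^{\log_2 C_\mu},
\end{equation*}
which is the claimed bound with $C = C_X := C_\mu^{-1} (2A_0)^{-\log_2 C_\mu}$ depending only on the quasi-metric constant $A_0$ and the doubling constant $C_\mu$.

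There is essentially no obstacle here; the only care required is in the bookkeeping of constants, specifically converting the additive estimate $k \le \log_2(2A_0 R/r) + 1$ into the multiplicative bound $C_\mu^{-k} \gtrsim (r/R)^{\log_2 C_\mu}$ via the identity $C_\mu^{-\log_2 t} = t^{-\log_2 C_\mu}$, and absorbing the extra factor of $A_0$ (from the quasi-triangle inequality) and the rounding-up constant into $C_X$.
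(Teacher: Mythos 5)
Your argument is correct and is the standard iterated-doubling proof of the lower mass bound; the paper itself only cites this as Lemma 2.1 of Cruz-Uribe--Cummings without reproducing a proof, and the cited proof proceeds exactly as you do (enlarge $B(x,R)$ to the concentric ball $B(y,2A_0R)$ via the quasi-triangle inequality, then double $B(y,r)$ roughly $\log_2(2A_0R/r)$ times and use $C_\mu^{\log_2 t}=t^{\log_2 C_\mu}$). The constant bookkeeping, including the factor $(2A_0)^{-\log_2 C_\mu}C_\mu^{-1}$, is handled correctly.
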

\begin{lemma}[\cite{Bra1996}, Lemma 1.9]\label{finite-bounded}
	$\mu(X) < \infty$ if and only if $X$ is bounded, which means there exist a ball $B \subseteq X$, such that $X=B$.
\end{lemma}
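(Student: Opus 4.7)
The plan is to prove both implications separately.

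For the backward direction, if $X = B(x_0, r_0)$ for some ball, then $\mu(X) = \mu(B(x_0, r_0)) < \infty$ directly from the doubling condition, which by the definition in the paper includes $\mu(B(x,r)) < \infty$ and $\mu(B(x,r)) > 0$ for every ball of positive radius. For the forward direction I would argue by contradiction: suppose $\mu(X) < \infty$ but $X$ is unbounded, meaning $B(x_0, r) \subsetneq X$ for every $x_0 \in X$ and every $r > 0$. Fix $x_0 \in X$; then $\mu(B(x_0, 1)) > 0$. The strategy is to exhibit infinitely many pairwise disjoint balls with measures uniformly bounded below by a positive constant, forcing $\mu(X) = \infty$.

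Using unboundedness, I would inductively select points $y_1, y_2, \ldots \in X$ satisfying $d(x_0, y_1) \geq 1$ and $d(x_0, y_{k+1}) \geq 2A_0\, d(x_0, y_k)$. Set $r_k = d(x_0, y_k)/(4A_0^2)$ and $B_k := B(y_k, r_k)$. A direct application of the quasi-triangle inequality (first upper-bounding $d(y_j, y_k) \leq A_0(r_j + r_k)$ under the assumption $B_j \cap B_k \neq \emptyset$, then lower-bounding $d(y_j, y_k) \geq d(x_0, y_k)/A_0 - d(x_0, y_j)$) yields $d(x_0, y_k) \leq \tfrac{4A_0+1}{3}\, d(x_0, y_j)$ whenever $j < k$, contradicting the growth condition. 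Hence the $B_k$ are pairwise disjoint.

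To conclude, I would apply the Lower Mass Bound (Lemma~\ref{LMB.}) with $x = x_0$, $y = y_k$, $R = 2A_0\, d(x_0, y_k)$, and $r = r_k$. Since $y_k \in B(x_0, R)$ and the ratio $r_k/R = 1/(8A_0^3)$ is independent of $k$, and since $R \geq 2A_0 \geq 1$ gives $\mu(B(x_0, R)) \geq \mu(B(x_0, 1)) > 0$, we obtain $\mu(B_k) \geq c$ for a positive constant $c$ independent of $k$. Summing the disjoint contributions produces $\mu(X) \geq \sum_{k \geq 1} \mu(B_k) = \infty$, the required contradiction. The main obstacle is the disjointness verification: the constant $A_0 \geq 1$ in the quasi-triangle inequality forces a more aggressive geometric growth rate for the $y_k$ than one would need in a genuine metric space, and the radii $r_k$ must be chosen carefully so that the resulting $B_k$ remain both disjoint and large enough for the Lower Mass Bound to give a uniform-in-$k$ measure estimate.
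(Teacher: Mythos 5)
Your proof is correct. Note that the paper does not prove this lemma at all: it is quoted verbatim as Lemma~1.9 of Bramanti--Cerutti \cite{Bra1996}, so there is no internal argument to compare against. Your self-contained argument is sound and standard: the backward direction is immediate from the doubling hypothesis as stated in the paper (which builds in $0<\mu(B)<\infty$ for every ball), and in the forward direction the numbers check out. With $a=d(x_0,y_j)$, $b=d(x_0,y_k)$, $j<k$, a nonempty intersection $B_j\cap B_k\neq\emptyset$ gives $b/A_0-a< A_0(r_j+r_k)=(a+b)/(4A_0)$, hence $3b<(4A_0+1)a$, while the selection rule forces $b\ge 2A_0a$ and $2A_0\ge(4A_0+1)/3$ exactly when $A_0\ge 1/2$, so the balls are indeed pairwise disjoint. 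The Lower Mass Bound then applies with the $k$-independent ratio $r_k/R=1/(8A_0^3)$, and the monotonicity observation $\mu(B(x_0,R))\ge\mu(B(x_0,1))>0$ correctly handles the fact that $R$ grows with $k$; summing the disjoint balls contradicts $\mu(X)<\infty$. The only point worth making explicit is that the negation of boundedness (no ball equals $X$) guarantees, for every $x_0$ and $R$, a point $y$ with $d(x_0,y)\ge R$, which is what licenses the inductive selection of the $y_k$; you use this implicitly and it is fine.
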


\begin{lemma}[\cite{Cruz2022}, Lemma 2.11]\label{2Log_3}
	Let $p(\cdot) \in L H$, then 
	$
	\mathop {\sup }\limits_{B \subseteq X} \mu {(B)^{{p_ - }(B) - {p_ + }(B)}} \lesssim 1.
	$
\end{lemma}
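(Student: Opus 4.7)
The plan is to reduce the supremum to small balls and to control two logarithmic scales against each other. Since $p_-(B) - p_+(B) \le 0$, the expression $\mu(B)^{p_-(B)-p_+(B)}$ is trivially bounded by $1$ whenever $\mu(B) \ge 1$. Hence the matter reduces to proving
\[
\bigl(p_+(B) - p_-(B)\bigr)\,\log(1/\mu(B)) \;\lesssim\; 1
\]
uniformly over balls $B = B(x_B, r_B)$ with $\mu(B) < 1$.

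For the exponent gap, the quasi-triangle inequality gives $d(y,z) \le 2A_0 r_B$ for all $y, z \in B$, so as soon as $2A_0 r_B < 1/2$ the hypothesis \eqref{LH0} delivers $p_+(B) - p_-(B) \lesssim 1/\log(e + 1/r_B)$. For the measure, iterating the doubling inequality $k \approx \log_2(1/r_B)$ times yields $\mu(B(x_B, 1)) \lesssim C_\mu^{k}\, \mu(B)$, hence
\[
\log(1/\mu(B)) \;\lesssim\; (\log_2 C_\mu)\,\log(1/r_B) + \log\bigl(1/\mu(B(x_B, 1))\bigr).
\]
If $\mu(B(x_B, 1))$ were uniformly bounded below, combining these two estimates would immediately give the claim, since then the product would be controlled by $\log(1/r_B)/\log(e + 1/r_B) \lesssim 1$.

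The main obstacle is exactly this last absorption: $\mu(B(x_B, 1))$ need not be uniformly bounded below as $x_B$ varies over $X$. I would resolve this by splitting on the position of $x_B$ relative to the base point $x_0$. When $d(x_B, x_0)$ stays bounded, doubling propagates the positive number $\mu(B(x_0, 1)) > 0$ to a uniform lower bound on $\mu(B(x_B, 1))$ via Lemma~\ref{LMB.}. When $d(x_B, x_0)$ is large, the $LH_\infty$ hypothesis \eqref{LHinfty} forces $p(y) \approx p_\infty$ throughout $B \subseteq B(x_B, 1)$, giving the sharper bound $p_+(B) - p_-(B) \lesssim 1/\log(e + d(x_B, x_0))$, which is strong enough to absorb any growth of $\log(1/\mu(B(x_B, 1)))$ incurred by propagating doubling out to $x_B$. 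Balls with $r_B$ bounded away from $0$ are handled in the same spirit, combining the two cases with Lemma~\ref{LMB.}, and the situation $\mu(X) < \infty$ collapses by Lemma~\ref{finite-bounded} to a bounded space where only finitely many scales contribute and the bound is automatic.
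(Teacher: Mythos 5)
The paper does not actually prove this lemma --- it is imported verbatim from \cite{Cruz2022} --- and your argument follows essentially the same route as that source: reduce to $\mu(B)<1$, trade $\log(1/\mu(B))$ for $\log(1/r_B)$ plus $\log\bigl(1/\mu(B(x_B,1))\bigr)$ via the lower mass bound (Lemma~\ref{LMB.}), absorb the first term with $LH_0$ and the second with $LH_\infty$ after splitting on $d(x_0,x_B)$. Two spots need tightening when you write it out: for balls with $r_B\gtrsim 1$ and $d(x_0,x_B)\lesssim r_B$ the exponent gap is not small and the saving must instead come from $\mu(B)\gtrsim 1$, which holds because a bounded dilate of $B$ contains $B(x_0,1)$ and doubling transfers the lower bound back; and when $\mu(X)<\infty$ the conclusion is not ``automatic'' --- small balls still require the $LH_0$ estimate --- rather, boundedness of $X$ gives the uniform lower bound $\mu(B(x,1))\gtrsim 1$, so the $LH_0$ case alone suffices, consistent with the remarks after Theorems D and E.
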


The following lemmas are initial parts supporting our main results. Actually, some lemmas' proofs are identical to their Euclidean case, and readers can refer to \cite{red,Cruz2022,Di2011} for more details. Hence, we will omit some of them for the brevity of this paper.

\begin{lemma}[\cite{red}, Proposition 2.21]\label{unity}
	Let $p(\cdot) \in \P_1$, then
	$$
	\int_X\left(\frac{|f(x)|}{\|f\|_{p(\cdot)}}\right)^{p(x)} d \mu=1 .
	$$
\end{lemma}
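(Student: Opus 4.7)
The plan is to establish $\rho_{p(\cdot)}(f/\|f\|_{p(\cdot)}) = 1$, where $\rho_{p(\cdot)}$ denotes the modular, by separate upper and lower bound arguments. Since $p(\cdot) \in \P_1$ forces $p_+ < \infty$, the set $X_\infty := \{x : p(x) = \infty\}$ has measure zero, so the $L^\infty$ contribution in $\rho_{p(\cdot)}$ vanishes and the claim reduces to showing $\int_X |f(x)|^{p(x)}/\|f\|_{p(\cdot)}^{p(x)}\, d\mu = 1$. I tacitly assume $0 < \|f\|_{p(\cdot)} < \infty$, since the statement is vacuous otherwise. Writing $\lambda_0 := \|f\|_{p(\cdot)}$, a key preliminary observation is that $\lambda \mapsto \rho_{p(\cdot)}(f/\lambda)$ is decreasing on $(0,\infty)$, because for each fixed $x$ the map $t \mapsto t^{p(x)}$ is increasing on $[0,\infty)$.

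For the upper bound, I would use that from the defining infimum together with monotonicity, $\rho_{p(\cdot)}(f/\lambda) \leq 1$ for every $\lambda > \lambda_0$. Choosing $\lambda_n \downarrow \lambda_0$, the integrands $|f(x)|^{p(x)}/\lambda_n^{p(x)}$ increase pointwise to $|f(x)|^{p(x)}/\lambda_0^{p(x)}$, and the Monotone Convergence Theorem yields $\rho_{p(\cdot)}(f/\lambda_0) = \lim_n \rho_{p(\cdot)}(f/\lambda_n) \leq 1$.

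For the lower bound, I would approach $\lambda_0$ from below. Fix any $\lambda \in (0, \lambda_0)$; by definition of the infimum, $\rho_{p(\cdot)}(f/\lambda) > 1$. The finiteness of $p_+$ produces the pointwise estimate $|f(x)|^{p(x)}/\lambda^{p(x)} = (\lambda_0/\lambda)^{p(x)} |f(x)|^{p(x)}/\lambda_0^{p(x)} \leq (\lambda_0/\lambda)^{p_+}\, |f(x)|^{p(x)}/\lambda_0^{p(x)}$, whence integrating gives $1 < \rho_{p(\cdot)}(f/\lambda) \leq (\lambda_0/\lambda)^{p_+}\, \rho_{p(\cdot)}(f/\lambda_0)$. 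Letting $\lambda \uparrow \lambda_0$ forces $\rho_{p(\cdot)}(f/\lambda_0) \geq 1$, and combined with the upper bound this delivers the desired equality. The main obstacle is this lower bound: a decreasing-sequence monotone-convergence argument is not directly available from below, since one does not know a priori that any majorant $|f|^{p(x)}/\lambda^{p(x)}$ with $\lambda < \lambda_0$ is integrable. Inserting the uniform multiplier $(\lambda_0/\lambda)^{p_+}$ bypasses this issue, and it is precisely here that the hypothesis $p(\cdot) \in \P_1$ (rather than merely $\P_0$) is essential, for otherwise a jump of $\rho_{p(\cdot)}(f/\cdot)$ at $\lambda_0$ could occur and the equality would fail.
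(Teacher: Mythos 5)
Your proof is correct, and it is the standard argument for this fact. The paper itself omits the proof, deferring to \cite{Cruz2013}, Proposition 2.21; your two-sided scheme --- monotone convergence along $\lambda_n\downarrow\|f\|_{p(\cdot)}$ to get $\rho_{p(\cdot)}(f/\|f\|_{p(\cdot)})\le 1$, and the uniform multiplier $(\lambda_0/\lambda)^{p_+}$ to pass to the limit from below, which is precisely where the hypothesis $p_+<\infty$ from $\mathscr{P}_1$ enters --- matches the argument in that reference, including the (necessary) tacit restriction to nontrivial $f\in L^{p(\cdot)}(X)$.
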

\begin{lemma}[\cite{red}, Corollary 2.23] \label{p.omega}
	Let $\Omega \subseteq X$ and $p(\cdot) \in \P_1 (\Omega)$. 
 
    If $\|f\|_{L^{p(\cdot)}(\Omega)} \leq 1$, then
	$$
	\|f\|_{p(\cdot)}^{p_{+}(\Omega)} \leq \int_\Omega|f(x)|^{p(x)} d \mu \leq\|f\|_{p(\cdot)}^{p_{-}(\Omega)} .
	$$
 
	If $\|f\|_{L^{p(\cdot)}(\Omega)} \geq 1$, then
	$$
	\|f\|_{p(\cdot)}^{p_{-}(\Omega)} \leq \int_\Omega|f(x)|^{p(x)} d \mu \leq\|f\|_{p(\cdot)}^{p_{+}(\Omega)} .
	$$
	Moverover, we have $\|f\|_{p(\cdot)} \leq C_1$ if and only if
	$\int_\Omega|f(x)|^{p(x)} d \mu \leq C_2$.
	When either $C_1=1$ or $C_2=1$, the other constant is also to be 1.
\end{lemma}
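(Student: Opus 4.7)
The plan is to pivot on Lemma \ref{unity} and then run the standard rescaling argument that underpins the modular--norm equivalence in variable Lebesgue spaces; nothing about the spaces of homogeneous type is used beyond the definition of $\rho_{p(\cdot)}$ and the Luxemburg norm, so the proof is essentially identical to its Euclidean counterpart and amounts to exploiting the fact that $\|f\|_{p(\cdot)}^{p(x)}$ can be sandwiched between two constant powers of the Luxemburg norm.

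First I would dispose of the degenerate case $f\equiv 0$ and otherwise assume $0<\|f\|_{p(\cdot)}<\infty$ (which is where the assumption $p(\cdot)\in\P_1$, guaranteeing $p_+(\Omega)<\infty$, gets used). Applying Lemma \ref{unity} to $f$ extended by zero outside $\Omega$ gives
$$\int_\Omega \left(\frac{|f(x)|}{\|f\|_{p(\cdot)}}\right)^{p(x)} d\mu = 1.$$
Rewriting $|f(x)|^{p(x)}=\|f\|_{p(\cdot)}^{p(x)}\bigl(|f(x)|/\|f\|_{p(\cdot)}\bigr)^{p(x)}$, the only remaining task is to bound $\|f\|_{p(\cdot)}^{p(x)}$ uniformly in $x\in\Omega$. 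When $\|f\|_{p(\cdot)}\leq 1$ the map $t\mapsto \|f\|_{p(\cdot)}^{t}$ is non-increasing, so
$$\|f\|_{p(\cdot)}^{p_+(\Omega)} \leq \|f\|_{p(\cdot)}^{p(x)} \leq \|f\|_{p(\cdot)}^{p_-(\Omega)};$$
integrating this pointwise estimate against the modular density above yields the first pair of inequalities. When $\|f\|_{p(\cdot)}\geq 1$ the same map is non-decreasing and the two bounds flip, producing the second pair with $p_-(\Omega)$ and $p_+(\Omega)$ exchanged.

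The ``moreover'' clause then falls out by packaging these two cases. Given $\|f\|_{p(\cdot)}\leq C_1$, one sets $C_2=\max\{C_1^{p_-(\Omega)},C_1^{p_+(\Omega)}\}$ and applies whichever of the two estimates is appropriate; conversely, from $\rho_{p(\cdot)}(f)\leq C_2$ one extracts $\|f\|_{p(\cdot)}\leq\max\{1,C_2^{1/p_-(\Omega)},C_2^{1/p_+(\Omega)}\}$ by separating into $\|f\|_{p(\cdot)}\leq 1$ and $\|f\|_{p(\cdot)}>1$. The sharp case $C_1=C_2=1$ drops out directly from the infimum defining the Luxemburg norm, combined with the continuity of $\lambda\mapsto \rho_{p(\cdot)}(f/\lambda)$.

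I do not expect any genuine obstacle. The only subtle point is that Lemma \ref{unity} requires $\|f\|_{p(\cdot)}$ to be finite and strictly positive, but these degenerate cases are handled by inspection. The condition $p(\cdot)\in\P_1(\Omega)$ is essential precisely because it guarantees $p_+(\Omega)<\infty$ and hence that the pointwise bounds on $\|f\|_{p(\cdot)}^{p(x)}$ involve finite powers on both sides; without it the entire dichotomy, and with it the modular--norm comparison, would break down.
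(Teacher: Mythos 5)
Your argument is correct and is exactly the standard proof of this modular--norm comparison (the one given for the cited Corollary 2.23 of Cruz-Uribe--Fiorenza): apply the unit-ball property of Lemma \ref{unity}, factor $|f(x)|^{p(x)}=\|f\|_{p(\cdot)}^{p(x)}\bigl(|f(x)|/\|f\|_{p(\cdot)}\bigr)^{p(x)}$, and bound the constant-base power by $p_\pm(\Omega)$ according to whether the norm is above or below $1$. The paper itself omits the proof and defers to the reference, so there is nothing further to reconcile; your handling of the degenerate cases and of the ``moreover'' clause is also the standard one.
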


\begin{lemma}[\cite{Cruz2022}, Lemma 2.6]\label{density}
	Let $p(\cdot) \in \P_1$, then the bounded functions with bounded support are dense in $L^{p(\cdot)}(X)$. Furthermore, any nonnegative function $f$ in $L^{p(\cdot)}(X)$ can be approximated as the limit of an increasing sequence.
\end{lemma}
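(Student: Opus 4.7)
The plan is to approximate any $f \in \Lpp(X)$ by truncating both its values and its support. First I would split $f = f_{+} - f_{-}$ and work one sign at a time, reducing matters to the case $f \ge 0$; carrying out this reduction simultaneously produces the construction needed for the second (monotone) assertion. Fixing a basepoint $x_0 \in X$, I would define, for each $k \in \mathbb{N}$,
\[ f_k(x) := \min\{f(x), k\}\, \chi_{B(x_0,\, k)}(x). \]
Each $f_k$ is bounded by $k$ with support in the ball $B(x_0,k)$, the sequence $\{f_k\}$ is increasing in $k$, and $f_k(x) \to f(x)$ pointwise for every $x \in X$. What remains is to upgrade this pointwise convergence to norm convergence $\|f - f_k\|_{\pp} \to 0$.

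The workhorse here is Lemma~\ref{p.omega} together with the dominated convergence theorem at the modular level. Since $p(\cdot) \in \P_1$ forces $p_+ < \infty$, applying Lemma~\ref{p.omega} to $f$ (splitting into the cases $\|f\|_{\pp} \le 1$ and $\|f\|_{\pp} \ge 1$) yields
\[ \rho_{\pp}(f) = \int_X |f(x)|^{p(x)}\, d\mu \;\le\; \max\bigl(\|f\|_{\pp}^{p_{-}},\, \|f\|_{\pp}^{p_{+}}\bigr) \;<\; \infty. \]
Because $0 \le f - f_k \le f$ pointwise, $|f - f_k|^{p(x)} \le |f(x)|^{p(x)}$ provides an integrable majorant, and dominated convergence gives $\rho_{\pp}(f - f_k) \to 0$. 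Applying Lemma~\ref{p.omega} in the opposite direction then converts this modular convergence into norm convergence, so $\|f - f_k\|_{\pp} \to 0$. For nonnegative $f$, the same sequence $\{f_k\}$ is the promised increasing sequence of bounded, boundedly supported approximants.

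The main obstacle is really only bookkeeping: carefully justifying the finiteness of $\rho_{\pp}(f)$ from the assumptions $f \in \Lpp(X)$ and $p_{+} < \infty$, and then using the modular/norm correspondence of Lemma~\ref{p.omega} in both directions without circularity. The $\|\cdot\|_{L^{\infty}(X_\infty)}$ contribution to the modular vanishes automatically under $p(\cdot) \in \P_1$, since the definition of $\P_1$ requires $p$ to be everywhere finite, so $X_\infty$ is empty. Beyond these points, the argument mirrors the standard Euclidean proof in \cite{Cruz2013}, with the ball $B(x_0, k)$ playing the role of the Euclidean ball of radius $k$ about the origin; the properties of $(X,d,\mu)$ used are only that balls are measurable sets of finite measure exhausting $X$ as $k \to \infty$, both of which follow from the space of homogeneous type structure.
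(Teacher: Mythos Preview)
Your proposal is correct and follows the standard truncation-plus-dominated-convergence argument that the paper itself defers to: the paper does not supply its own proof here but cites \cite{Cruz2022} and remarks that the argument is identical to the Euclidean case in \cite{Cruz2013}, which is exactly the route you have taken.
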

\begin{lemma}[\cite{red}, Theorem 2.59]\label{Levi}
	Let $p(\cdot) \in \P_1$. For a sequence of non-negative measureable functions, denoted as $\left\{f_k\right\}_{k=1}^{\infty}$ and increasing pointwise almost everywhere to a function $f \in L^{p(\cdot)}$, we can deduce that $\left\|f_k\right\|_{p(\cdot)} \rightarrow\|f\|_{p(\cdot)}$.
\end{lemma}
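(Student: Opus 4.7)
The plan is to exploit monotonicity of the Luxemburg norm to obtain one inequality for free, and then use the classical Monotone Convergence Theorem applied to the modular to obtain the reverse inequality. Since $p(\cdot) \in \P_1$ we have $p_+ < \infty$, so the $L^\infty(X_\infty)$ contribution appearing in the definition of $\rho_{p(\cdot)}$ vanishes, and I may work throughout with $\rho_{p(\cdot)}(g) = \int_X |g(x)|^{p(x)} d\mu$.

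First, because $0 \le f_k \le f_{k+1} \le f$ almost everywhere, the definition of the Luxemburg norm immediately yields $\|f_k\|_{p(\cdot)} \le \|f_{k+1}\|_{p(\cdot)} \le \|f\|_{p(\cdot)}$. Hence $\{\|f_k\|_{p(\cdot)}\}$ is an increasing bounded real sequence, and it has a limit $L \le \|f\|_{p(\cdot)}$.

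To get $L \ge \|f\|_{p(\cdot)}$, fix any $\lambda$ with $0 < \lambda < \|f\|_{p(\cdot)}$. Since $f_k/\lambda$ increases pointwise a.e.\ to $f/\lambda$ and $t \mapsto t^{p(x)}$ is increasing on $[0,\infty)$, the functions $(f_k/\lambda)^{p(x)}$ increase pointwise a.e.\ to $(f/\lambda)^{p(x)}$. The classical Monotone Convergence Theorem therefore gives
\[
\rho_{p(\cdot)}\!\left(\tfrac{f_k}{\lambda}\right) \;\uparrow\; \rho_{p(\cdot)}\!\left(\tfrac{f}{\lambda}\right).
\]
Since $\lambda < \|f\|_{p(\cdot)}$, applying Lemma \ref{p.omega} (with $\Omega = X$) to $f/\lambda$ gives $\rho_{p(\cdot)}(f/\lambda) > 1$; consequently $\rho_{p(\cdot)}(f_k/\lambda) > 1$ for all sufficiently large $k$, which by the very definition of the Luxemburg norm forces $\|f_k\|_{p(\cdot)} \ge \lambda$. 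Letting $k \to \infty$ and then $\lambda \nearrow \|f\|_{p(\cdot)}$ yields $L \ge \|f\|_{p(\cdot)}$, and combined with the earlier bound this proves $\|f_k\|_{p(\cdot)} \to \|f\|_{p(\cdot)}$.

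The only delicate point is ensuring the strict inequality $\rho_{p(\cdot)}(f/\lambda) > 1$ when $\lambda < \|f\|_{p(\cdot)}$; a priori the definition only says that $\rho_{p(\cdot)}(f/\|f\|_{p(\cdot)}) \le 1$ and that no smaller scaling satisfies this, so one must rule out the possibility that $\rho_{p(\cdot)}(f/\lambda) \le 1$ itself. This is handled cleanly by Lemma \ref{p.omega}, which characterises the norm through the modular: if $\|f/\lambda\|_{p(\cdot)} > 1$ then the modular must also exceed $1$. This is the only subtle step; the rest of the argument is simply Beppo Levi applied to the modular and is insensitive to the variable nature of $p(\cdot)$, so it transfers verbatim from the Euclidean version in \cite{Cruz2013}.
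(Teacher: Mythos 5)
Your proof is correct. The paper itself gives no proof of this lemma (it is quoted from \cite{Cruz2013}, Theorem 2.59, and the author explicitly omits such proofs as identical to the Euclidean case); your argument --- monotonicity of the Luxemburg norm for the easy inequality, and the Monotone Convergence Theorem applied to the modular together with the norm--modular comparison of Lemma \ref{p.omega} to force $\|f_k\|_{p(\cdot)} \ge \lambda$ for any $\lambda < \|f\|_{p(\cdot)}$ --- is essentially the standard proof in that reference, and you correctly handle the one delicate point (that $\lambda < \|f\|_{p(\cdot)}$ implies $\rho_{p(\cdot)}(f/\lambda) > 1$, not merely $\ge 1$).
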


\begin{lemma}[\cite{Cruz2022}, Lemma 2.10]\label{2Log_2}
	For any point $y\in G$, $G$ is a subset of $X$, and two exponents $p_1(\cdot)$ and $p_2(\cdot)$, there exists a constant $C_0>0$ such that
	$$
	|p_1(y)-p_2(y)| \leq \frac{C_0}{\log \left(e+d\left(x_0, y\right)\right)}.
	$$
	Then there exists a constant $C=C_{t,C_0}$ such that 
	\begin{equation}\label{2Log_1}
	\int_G|f(y)|^{p_1(y)} u(y) d \mu \leq C \int_G|f(y)|^{p_2(y)} u(y) d \mu+\int_G \frac{1}{\left(e+d\left(x_0, y\right)\right)^{t s_{-}(G)}} u(y) d \mu
	\end{equation}
	for all functions $f$ with $|f(y)| \leq 1$ and every $t\geq 1$.
\end{lemma}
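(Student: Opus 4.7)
The plan is to split the domain $G$ into two regions according to the size of $|f(y)|$ relative to the decay weight $(e+d(x_0,y))^{-t}$, handling each region by a different elementary estimate. The log-H\"{o}lder bound on $|p_1-p_2|$ enters only through the identity $(e+d(x_0,y))^{C_0/\log(e+d(x_0,y))}=e^{C_0}$.

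First I would write $|f(y)|^{p_1(y)} = |f(y)|^{p_2(y)} \cdot |f(y)|^{p_1(y)-p_2(y)}$. Since $|f(y)|\le 1$, whenever $p_1(y)\ge p_2(y)$ the factor $|f(y)|^{p_1(y)-p_2(y)}$ is at most $1$ and one has the pointwise domination $|f(y)|^{p_1(y)}\le |f(y)|^{p_2(y)}$ for free. So only the case $p_1(y)<p_2(y)$ requires an argument. I would then decompose
\[
G_1=\{y\in G:\,|f(y)|\ge (e+d(x_0,y))^{-t}\},\qquad G_2=G\setminus G_1,
\]
and treat each piece separately.

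On $G_1$, using $|f(y)|\ge (e+d(x_0,y))^{-t}$ together with the hypothesis $|p_1(y)-p_2(y)|\le C_0/\log(e+d(x_0,y))$, the estimate
\[
|f(y)|^{p_1(y)-p_2(y)} \le (e+d(x_0,y))^{t|p_1(y)-p_2(y)|} \le (e+d(x_0,y))^{tC_0/\log(e+d(x_0,y))} = e^{tC_0}
\]
yields $|f(y)|^{p_1(y)}\le e^{tC_0}|f(y)|^{p_2(y)}$ on $G_1$. Integrating against $u(y)\,d\mu$ produces the first term on the right-hand side with constant $C=e^{tC_0}$. On $G_2$, the function $|f|$ is itself small: since $|f(y)|<(e+d(x_0,y))^{-t}\le 1$ and $p_1(y)\ge s_-(G)>0$, the monotonicity of $r\mapsto r^{p_1(y)}$ for $r\in[0,1]$ gives $|f(y)|^{p_1(y)}\le |f(y)|^{s_-(G)}\le (e+d(x_0,y))^{-t\,s_-(G)}$, and integrating against $u(y)\,d\mu$ produces the error term \eqref{2Log_1}. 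Adding the two contributions closes the proof.

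I do not anticipate a serious obstacle beyond careful bookkeeping. The dichotomy $|f|\lessgtr (e+d(x_0,y))^{-t}$ is tailored precisely so that one side absorbs the sign of $p_1-p_2$ via the log-H\"{o}lder identity while the other side is dominated by the explicit decay weight; both $G_1$ and $G_2$ are measurable by the measurability of $|f|$, $p_1$, $p_2$ and $d(x_0,\cdot)$. The constant depends only on $t$ and $C_0$, as asserted.
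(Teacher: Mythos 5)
Your proof is correct and is exactly the standard dichotomy argument (splitting $G$ according to whether $|f(y)|\ge (e+d(x_0,y))^{-t}$ and using $(e+d(x_0,y))^{C_0/\log(e+d(x_0,y))}=e^{C_0}$), which is the proof of the cited source; the paper itself states this lemma by reference to \cite{Cruz2022} and gives no proof. Your reading of the otherwise undefined $s_-(G)$ as a lower bound for $p_1(\cdot)$ on $G$, i.e.\ $(p_1)_-(G)$, is the right one and is what makes the error-term estimate on $G_2$ go through.
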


\subsection{Properties of weights}
~

This subsection is aimed to exploring the properties of the $A_{p(\cdot), q(\cdot)}$ condition within spaces of homogeneous type. The following lemma reflects the properties of $A_{\infty}$, defined by $\bigcup_{p \geq 1} A_p$, whose proof are similar to that of \cite[Theorem 7.3.3]{249}.
\begin{lemma}\label{Ainfty}
If $\omega$ is a weight function, then the following conditions are equivalent:
    \begin{enumerate}
        \item $\omega \in A_{\infty}$.
        \item There exist constants $\epsilon>0$ and $C_2>1$ such that 
        $$
        \frac{\mu(E)}{\mu(B)} \leq C_2\left(\frac{\omega(E)}{\omega(B)}\right)^\epsilon,
        $$
        for any ball $B$ and 
        its measurable subset $E$.
        \item The measure $d \omega(x)=\omega(x) d \mu(x)$ satisfies doubling condition and there exist constants $\delta>0$ and $C_1>1$ such that 
        $$
        \frac{\omega(E)}{\omega(B)} \leq C_1\left(\frac{\mu(E)}{\mu(B)}\right)^\delta,
        $$
        for any ball $B$ and its  measurable subset $E$.
    \end{enumerate}
    \end{lemma}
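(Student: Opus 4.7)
The plan is to adapt the classical Muckenhoupt--Wheeden proof of the $A_\infty$ characterizations (compare, for instance, \cite[Theorem 7.3.3]{249}) to the spaces of homogeneous type setting, establishing the four implications $(1)\Rightarrow(3)$, $(1)\Rightarrow(2)$, $(3)\Rightarrow(1)$, and $(2)\Rightarrow(1)$. The central tool throughout will be the reverse H\"older inequality for $A_p$ weights on $X$ (proved via a Calder\'on--Zygmund decomposition in the SHT framework, cf.\ \cite{Cruz2022}), together with the standard duality $\omega \in A_p \Leftrightarrow \sigma := \omega^{1-p'} \in A_{p'}$ and the automatic doubling of $A_p$ weights.

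For $(1)\Rightarrow(3)$ I would use that $\omega \in A_\infty$ gives $\omega \in A_p$ for some $p \geq 1$, hence $\nu = \omega\,d\mu$ is doubling and $\omega$ satisfies $\bigl(\tfrac{1}{\mu(B)}\int_B \omega^r\,d\mu\bigr)^{1/r} \leq C\,\tfrac{1}{\mu(B)}\int_B \omega\,d\mu$ for some $r>1$. For measurable $E \subseteq B$, H\"older's inequality with exponents $r, r'$ followed by the reverse H\"older estimate gives
$$\omega(E) \leq \mu(E)^{1/r'} \Bigl(\int_B \omega^r\,d\mu\Bigr)^{1/r} \lesssim \Bigl(\frac{\mu(E)}{\mu(B)}\Bigr)^{1/r'}\omega(B),$$
which is $(3)$ with $\delta = 1/r'$. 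For $(1)\Rightarrow(2)$ I apply the same scheme to the dual weight $\sigma$, obtaining $\sigma(E)/\sigma(B) \lesssim (\mu(E)/\mu(B))^{\delta'}$; combining this with the elementary H\"older bound $\mu(E) \leq \omega(E)^{1/p}\sigma(E)^{1/p'}$ and the matching lower bound $\mu(B) \gtrsim \omega(B)^{1/p}\sigma(B)^{1/p'}$ provided by the $A_p$ condition, one finds
$$\frac{\mu(E)}{\mu(B)} \lesssim \Bigl(\frac{\omega(E)}{\omega(B)}\Bigr)^{1/p}\Bigl(\frac{\mu(E)}{\mu(B)}\Bigr)^{\delta'/p'},$$
and, since $\delta'/p' < 1$, rearranging yields $(2)$ with some exponent $\epsilon > 0$.

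For the converse directions, the power estimate in $(3)$ immediately produces $\alpha \in (0,1)$ with $\mu(E)\leq \alpha\mu(B) \Rightarrow \omega(E)\leq \tfrac12\omega(B)$. For $(2)\Rightarrow(1)$ I would first pass to complements $F = B\setminus E$, recasting $(2)$ as $1 - \mu(E)/\mu(B) \leq C_2(1 - \omega(E)/\omega(B))^\epsilon$; taking $\mu(E)\leq \tfrac12\mu(B)$ then yields $\omega(E) \leq c\,\omega(B)$ for some constant $c < 1$. A standard Calder\'on--Zygmund stopping-time iteration on a system of Christ-type dyadic cubes on $X$ then upgrades either smallness condition to a reverse H\"older inequality for $\omega$, and hence to $\omega \in A_p$ for some $p \geq 1$.

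The main obstacle, as I see it, lies precisely in the upgrade step of $(3)\Rightarrow(1)$ and $(2)\Rightarrow(1)$: one must carry the Calder\'on--Zygmund stopping-time argument through faithfully in the SHT framework (via Christ-type dyadic cubes) and verify that the ``smallness'' condition really does imply a reverse H\"older inequality and thereby the $A_p$ condition. Once this classical argument is transported to the SHT setting, the remaining ingredients (reverse H\"older, the $A_p \leftrightarrow A_{p'}$ duality, and doubling of $A_p$ weights) are standard and directly available from \cite{Cruz2022} and the references therein.
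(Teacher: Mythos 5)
Your proposal is correct and follows essentially the same route as the paper, which gives no proof of this lemma at all but simply remarks that the argument is "similar to that of \cite[Theorem 7.3.3]{249}" — precisely the classical Muckenhoupt--Coifman--Fefferman chain of implications (reverse H\"older, duality $\omega\in A_p \Leftrightarrow \omega^{1-p'}\in A_{p'}$, and the Calder\'on--Zygmund stopping-time upgrade on Christ/Hyt\"onen--Kairema dyadic cubes) that you sketch. The obstacle you flag, transporting the stopping-time argument to the SHT setting, is exactly the content the paper implicitly delegates to the literature.
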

The following Hölder's inequality is very useful.
\begin{lemma}[\cite{red}, Theorem 2.26]\label{Holder}
Let $p(\cdot) \in \P_1$, then
$$
\int_X|f(x) g(x)| d \mu \leq 4\|f\|_{p(\cdot)}\|g\|_{p^{\prime}(\cdot)}.
$$
\end{lemma}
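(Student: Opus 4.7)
The plan is to follow the classical ``unit ball'' technique, adapted to the variable-exponent setting. First, I dispose of trivial cases: if either $\|f\|_{p(\cdot)}$ or $\|g\|_{p'(\cdot)}$ is $0$ or $\infty$ the inequality is vacuous, so I may assume both norms are finite and positive. Exploiting the homogeneity of the Luxemburg norm, I normalize by setting
\[
F = \frac{|f|}{\|f\|_{p(\cdot)}}, \qquad G = \frac{|g|}{\|g\|_{p'(\cdot)}},
\]
so that $\|F\|_{p(\cdot)} = \|G\|_{p'(\cdot)} = 1$, and hence (directly from the definition of these norms as infima) $\rho_{p(\cdot)}(F) \leq 1$ and $\rho_{p'(\cdot)}(G) \leq 1$.

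Next I split $X$ as the disjoint union $E_+ \cup E_0$, where $E_+ = \{x \in X : p(x) > 1\}$ and $E_0 = \{x \in X : p(x) = 1\}$; on $E_0$ the conjugate exponent satisfies $p'(x) = \infty$. On $E_+$, the scalar Young inequality applied pointwise with $s = p(x)$ and $s' = p'(x)$ gives
\[
F(x)G(x) \leq \frac{F(x)^{p(x)}}{p(x)} + \frac{G(x)^{p'(x)}}{p'(x)} \leq F(x)^{p(x)} + G(x)^{p'(x)}.
\]
On $E_0$, the modular $\rho_{p'(\cdot)}$ contains the $L^\infty(E_0)$-contribution of $G$ by the definition of $\rho_{p(\cdot)}$ recorded in Section~1, so $\rho_{p'(\cdot)}(G) \leq 1$ forces $\|G\|_{L^\infty(E_0)} \leq 1$; combined with $p(x) = 1$ on $E_0$ this yields $F(x) G(x) \leq F(x) = F(x)^{p(x)}$ almost everywhere on $E_0$. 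Integrating the two pointwise estimates and using Lemma~\ref{p.omega} (the unit-ball property) where needed,
\[
\int_X F(x) G(x)\, d\mu \leq \int_X F(x)^{p(x)}\, d\mu + \int_{E_+} G(x)^{p'(x)}\, d\mu \leq \rho_{p(\cdot)}(F) + \rho_{p'(\cdot)}(G) \leq 2.
\]
Undoing the normalization yields $\int_X |fg|\, d\mu \leq 2\|f\|_{p(\cdot)}\|g\|_{p'(\cdot)} \leq 4\|f\|_{p(\cdot)}\|g\|_{p'(\cdot)}$, which is the claim (with the stated slack constant $4$).

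The only delicate point I foresee is the handling of $E_0$: on this set $p'(x) = \infty$ and the pointwise Young inequality is meaningless, so one must instead invoke the $L^\infty$ component of $\rho_{p'(\cdot)}$ to control $|G|$ by $1$ almost everywhere on $E_0$. Once this subtlety is taken care of, the remainder of the argument is a direct application of Young's inequality together with an integration against the modular, and no further obstacles are anticipated.
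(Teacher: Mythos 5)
Your proof is correct. The paper itself does not prove this lemma --- it is imported verbatim from the cited reference (Cruz-Uribe--Fiorenza, Theorem 2.26) --- and your argument is essentially the standard proof given there: normalize to unit norm, split $X$ according to the value of the exponent, apply Young's inequality pointwise where $1<p(x)<\infty$, and use the $L^\infty$ component of the modular on $\{p'(x)=\infty\}$. Since $p(\cdot)\in\P_1$ forces $p_+<\infty$, the set $\{p(x)=\infty\}$ is empty, so your two-set decomposition suffices and you in fact obtain the sharper constant $2$ (the constant $4$ in the general statement accounts for the additional set $\{p(x)=\infty\}$, absent here); your handling of $E_0$ via $\|G\|_{L^\infty(E_0)}\le 1$ is exactly the delicate point and is done correctly.
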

To apply the properties introduced in the above, this study employs the $A_{p(\cdot), q(\cdot)}$ condition for the construction of a weight $W$, see Lemma \ref{Apq_Ainfty}, within the $A_{\infty}$ class. 
And the following lemmas are necessary for this purpose.
    \begin{lemma}\label{normbound}
    Let $p(\cdot), q(\cdot) \in \mathscr{P}_1$ and $\frac{1}{p(\cdot)}-\frac{1}{q(\cdot)}=\eta \in[0,1)$. For any ball $B$ and its measurable subset $E$, if $\omega \in A_{p(\cdot), q(\cdot)}$, then 
    $$
\left(\frac{\mu(E)}{\mu(B)}\right)^{1-\eta} \le16[\omega]_{A_{p(\cdot), q(\cdot)}} \frac{\left\|\omega \chi_E\right\|_{q(\cdot)}}{\left\|\omega \chi_B\right\|_{q(\cdot)}}.
    $$
    \end{lemma}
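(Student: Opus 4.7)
The strategy is to start from the integral representation $\mu(E)=\int_X \chi_E \, d\mu$, insert the factors $\omega\cdot\omega^{-1}$, and then apply Hölder's inequality with three exponents chosen so that the $A_{p(\cdot),q(\cdot)}$ quantity appears naturally. Concretely, the hypothesis $\tfrac{1}{p(\cdot)}-\tfrac{1}{q(\cdot)}=\eta$ rewrites as
\[
\eta+\frac{1}{q(\cdot)}+\frac{1}{p'(\cdot)}=\eta+\frac{1}{q(\cdot)}+1-\frac{1}{p(\cdot)}=1,
\]
so the constant exponent $1/\eta$ together with the variable exponents $q(\cdot),p'(\cdot)$ form an admissible triple. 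Writing $\mu(E)=\int_X \chi_E\cdot(\omega\chi_E)\cdot(\omega^{-1}\chi_E)\,d\mu$ and applying Hölder's inequality (Lemma~\ref{Holder}) twice in succession — first splitting off the constant-exponent factor $\chi_E$, then splitting the remaining product — yields, with a constant of $4\cdot 4=16$,
\[
\mu(E)\le 16\,\|\chi_E\|_{1/\eta}\,\|\omega\chi_E\|_{q(\cdot)}\,\|\omega^{-1}\chi_E\|_{p'(\cdot)}=16\,\mu(E)^{\eta}\,\|\omega\chi_E\|_{q(\cdot)}\,\|\omega^{-1}\chi_E\|_{p'(\cdot)}.
\]
Dividing by $\mu(E)^{\eta}$ (using that $\mu(E)<\infty$, which may be assumed without loss of generality) produces the intermediate estimate
\[
\mu(E)^{1-\eta}\le 16\,\|\omega\chi_E\|_{q(\cdot)}\,\|\omega^{-1}\chi_E\|_{p'(\cdot)}.
\]

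Next I would exploit the containment $E\subseteq B$ through the monotonicity of the Luxemburg norm, replacing $\|\omega^{-1}\chi_E\|_{p'(\cdot)}$ by the larger quantity $\|\omega^{-1}\chi_B\|_{p'(\cdot)}$. Then apply the definition of $A_{p(\cdot),q(\cdot)}$ to the ball $B$, which gives
\[
\|\omega^{-1}\chi_B\|_{p'(\cdot)}\le \frac{[\omega]_{A_{p(\cdot),q(\cdot)}}\,\mu(B)^{1-\eta}}{\|\omega\chi_B\|_{q(\cdot)}}.
\]
Substituting this bound into the previous inequality and dividing both sides by $\mu(B)^{1-\eta}$ produces the desired conclusion
\[
\left(\frac{\mu(E)}{\mu(B)}\right)^{1-\eta}\le 16\,[\omega]_{A_{p(\cdot),q(\cdot)}}\,\frac{\|\omega\chi_E\|_{q(\cdot)}}{\|\omega\chi_B\|_{q(\cdot)}}.
\]

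The only subtle point is the edge case $\eta=0$, where the exponent $1/\eta$ must be read as $\infty$ and the factor $\|\chi_E\|_{1/\eta}$ as $\|\chi_E\|_{\infty}=1$; the three-factor Hölder then collapses to the standard two-factor Hölder with exponents $q(\cdot)$ and $p'(\cdot)=q'(\cdot)$, and the same argument goes through (in fact with a better constant). The main technical obstacle is keeping track of the constant in the three-factor Hölder inequality in the variable-exponent setting — each application of Lemma~\ref{Holder} costs a factor of $4$, so two applications give the overall factor $16$ that appears in the statement.
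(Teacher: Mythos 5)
Your proof is correct and follows essentially the same route as the paper: insert $\omega\cdot\omega^{-1}$ into $\mu(E)=\int\chi_E\,d\mu$, apply H\"older twice (splitting off a constant exponent $1/\eta$ to produce the factor $\mu(E)^{\eta}$), enlarge $\|\omega^{-1}\chi_E\|_{p'(\cdot)}$ to $\|\omega^{-1}\chi_B\|_{p'(\cdot)}$, and invoke the $A_{p(\cdot),q(\cdot)}$ condition on $B$. Your treatment is in fact slightly more explicit than the paper's about the exponent bookkeeping and the $\eta=0$ edge case.
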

    \begin{proof}
    By Hölder's inequality and the $A_{p(\cdot), q(\cdot)}$ condition (Definition \ref{def.Apq}),
    \begin{align*}
    \mu(E) & =\int_X \omega(x) \chi_E \omega(x)^{-1} \chi_B d \mu \\
    & \leq 4\left\|\omega \chi_E\right\|_{q(\cdot)}\left\|\omega^{-1} \chi_B\right\|_{q^{\prime}(\cdot)} \\
    & \leq 16 \mu(E)^\eta\left\|\omega \chi_E\right\|_{q(\cdot)}\left\|\omega^{-1} \chi_B\right\|_{p^{\prime}(\cdot)}. \\
    \end{align*}
    Thus,
    $$
\left(\frac{\mu(E)}{\mu(B)}\right)^{1-\eta} \le16[\omega]_{A_{p(\cdot), q(\cdot)}} \frac{\left\|\omega \chi_E\right\|_{q(\cdot)}}{\left\|\omega \chi_B\right\|_{q(\cdot)}}
    $$
    \end{proof}
The next lemma plays a important role in our proof, which is dedicated to the proof of \eqref{Suff.ineq_7}.    
    \begin{lemma}[\cite{Cruz2022}, Lemma 3.3]\label{fracexp}
Let $p(\cdot) \in \mathscr{P}_1\cap LH$ and $\omega \in A_{p(\cdot)}$. Then
    $$
\mathop {\sup }\limits_{B \subseteq X} \left\| {\omega {\chi _B}} \right\|_{p( \cdot )}^{{p_ - }(B) - {p_ + }(B)} \lesssim 1.
    $$
    \end{lemma}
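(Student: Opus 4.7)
My plan is to reduce the claim to its unweighted counterpart, Lemma~\ref{2Log_3}, by proving a power-law lower bound $\|\omega\chi_B\|_{p(\cdot)} \gtrsim \mu(B)^{\kappa}$ for some absolute $\kappa>0$. Write $N_B := \|\omega\chi_B\|_{p(\cdot)}$ and $\delta(B) := p_+(B) - p_-(B) \ge 0$. Since $N_B^{-\delta(B)} \le 1$ whenever $N_B \ge 1$, it suffices to treat the case $N_B < 1$; taking logarithms, the target becomes the uniform bound $\delta(B)\log(1/N_B) \lesssim 1$. By the LH$_0$ condition together with the doubling of $\mu$ and Lemma~\ref{LMB.}, one has $\delta(B) \lesssim 1/\log(e + 1/\mu(B))$, so it is enough to establish a polynomial lower bound $N_B \gtrsim \mu(B)^{\kappa}$.

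To obtain such a power bound, I would combine H\"older's inequality (Lemma~\ref{Holder}) with the $A_{p(\cdot)}$ condition, which yields
$$N_B \cdot \|\omega^{-1}\chi_B\|_{p'(\cdot)} \approx \mu(B).$$
The task then reduces to proving a sub-linear upper bound $\|\omega^{-1}\chi_B\|_{p'(\cdot)} \lesssim \mu(B)^{1-\kappa}$. To achieve this I would exploit the duality $\omega^{-1} \in A_{p'(\cdot)}$ noted in the excerpt which, via a constant-exponent comparison that uses LH-regularity to freeze $p'(\cdot)$ near a representative value on $B$, places $\omega^{-p_B'}$ in the classical $A_\infty$ class on $(X,d,\mu)$. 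Lemma~\ref{Ainfty} then provides a reverse self-improvement inequality for $\omega^{-1}$, from which the desired power-type upper bound on the variable-exponent norm follows by unwinding the Luxemburg norm via Lemma~\ref{p.omega}.

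Combining the bound $N_B \gtrsim \mu(B)^{\kappa}$ with Lemma~\ref{2Log_3} then yields
$$N_B^{\delta(B)} \ge c^{\delta(B)}\,\mu(B)^{\kappa\,\delta(B)} \gtrsim 1,$$
since $\delta(B)$ is bounded above (by $p_+ - p_- < \infty$) and $\mu(B)^{\delta(B)}$ is bounded below by Lemma~\ref{2Log_3}. Inverting gives $N_B^{p_-(B)-p_+(B)} \lesssim 1$, uniformly over $B$.

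The main obstacle will be the intermediate upper bound on $\|\omega^{-1}\chi_B\|_{p'(\cdot)}$: transferring classical $A_\infty$ self-improvement to the variable-exponent setting on a generic ball in a space of homogeneous type. This requires a delicate interplay between the LH regularity of $p(\cdot)$ (to approximate variable exponents by constants on each scale) and the doubling/reverse-H\"older properties associated to $\omega$; one also needs to argue that the approximation errors accumulated across scales do not spoil the exponent $\kappa$.
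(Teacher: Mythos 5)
First, note that the paper does not actually prove this lemma: it is quoted verbatim as Lemma~3.3 of \cite{Cruz2022}, so there is no in-paper proof to compare against. Judging your argument on its own merits, the reduction in your first and last paragraphs is sound: since $p_-(B)-p_+(B)\le 0$ one may assume $\|\omega\chi_B\|_{p(\cdot)}<1$, and a bound $\|\omega\chi_B\|_{p(\cdot)}\ge c\,\mu(B)^{\kappa}$ would indeed finish the proof via Lemma~\ref{2Log_3}. The fatal problem is that this intermediate power-law bound is \emph{false}, even with constants depending on $\omega$ and even restricted to the regime $\|\omega\chi_B\|_{p(\cdot)}<1$. Take $X=\mathbb{R}$, $p(\cdot)\equiv 2$ (or any nonconstant $LH$ exponent near $2$), and $\omega(x)^{2}=(1+|x|)^{-1/2}$, which is a legitimate $A_{p(\cdot)}$ weight. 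For the unit balls $B_j=B(x_j,1)$ with $x_j\to\infty$ one has $\mu(B_j)=2$ fixed while $\|\omega\chi_{B_j}\|_{2}\approx |x_j|^{-1/4}\to 0$, so no inequality of the form $\|\omega\chi_B\|_{p(\cdot)}\ge c\,\mu(B)^{\kappa}$ can hold. Equivalently, your target upper bound $\|\omega^{-1}\chi_B\|_{p'(\cdot)}\lesssim\mu(B)^{1-\kappa}$ fails for these balls. The $A_\infty$ self-improvement you invoke (Lemma~\ref{Ainfty}) only compares a ball to a \emph{containing} ball; it cannot produce an absolute power of $\mu(B)$ because the weight may decay (polynomially) as the ball recedes from the base point while $\mu(B)$ stays constant.

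What the counterexample reveals is the missing ingredient: the correct lower bound must record the location of $B$, something like $\|\omega\chi_B\|_{p(\cdot)}\gtrsim \mu(B)^{\kappa_1}\bigl(e+d(x_0,x_c(B))\bigr)^{-\kappa_2}$, obtained by comparing $W(B)=\int_B\omega^{p(x)}d\mu$ with $W$ of a large ball centered at $x_0$ containing $B$ via the $A_\infty$ comparison, together with the lower mass bound (Lemma~\ref{LMB.}). One then needs \emph{both} halves of the log-H\"older hypothesis to conclude: Lemma~\ref{2Log_3} (ultimately $LH_0$) absorbs the factor $\mu(B)^{-\kappa_1\delta(B)}$, while the $LH_\infty$ decay $\delta(B)\lesssim 1/\log(e+d(x_0,x_c(B)))$ is what absorbs the new factor $(e+d(x_0,x_c(B)))^{\kappa_2\delta(B)}$. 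Your sketch never uses $LH_\infty$ at this stage, which is precisely why it cannot close. This is also the route taken in the cited proof of \cite{Cruz2022}, and it is mirrored elsewhere in the present paper (compare the treatment of the sets $\mathscr{G}$ and $\mathscr{H}$ in Subsection~\ref{proof2}, where the distance to $x_0$ enters explicitly). A secondary, repairable issue: ``freezing'' $p'(\cdot)$ at a representative value $p'_B$ to place $\omega^{-p'_B}$ in classical $A_\infty$ is itself an application of the very norm-versus-modular comparison you are trying to prove, and it degenerates when $p_-(B)=1$ (so that $p'=\infty$ on part of $B$); the argument should work with the modulars $W(B)$ and $\sigma(B)$ directly rather than with frozen exponents.
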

   

\begin{lemma}\label{Apq_Ainfty}
Let $p(\cdot), q(\cdot) \in \mathscr{P}_1\cap LH$, $\frac{1}{p(\cdot)}-\frac{1}{q(\cdot)}=\eta \in [0,1)$, and $\omega \in A_{p(\cdot), q(\cdot)}$. Then $W(\cdot):=$ $\omega(\cdot)^{q(\cdot)} \in A_{\infty}$.
    \end{lemma}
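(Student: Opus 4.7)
I plan to establish $W \in A_\infty$ via condition (2) of Lemma \ref{Ainfty}: I will produce constants $\epsilon>0$ and $C>1$ such that
\[
\frac{\mu(E)}{\mu(B)} \leq C \left(\frac{W(E)}{W(B)}\right)^\epsilon
\]
for every ball $B \subseteq X$ and every measurable $E \subseteq B$. The natural starting point is Lemma \ref{normbound}, which yields
\[
\left(\frac{\mu(E)}{\mu(B)}\right)^{1-\eta} \leq 16\,[\omega]_{A_{p(\cdot),q(\cdot)}}\, \frac{\|\omega\chi_E\|_{q(\cdot)}}{\|\omega\chi_B\|_{q(\cdot)}},
\]
so the entire task reduces to bounding the ratio of Luxemburg norms by a positive power of $W(E)/W(B)$; equivalently, to converting cleanly between the Luxemburg norm $\|\omega\chi_{\cdot}\|_{q(\cdot)}$ and its underlying modular $W(\cdot)=\int_{\cdot} \omega^{q(x)}\,d\mu$.

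Writing $a=\|\omega\chi_E\|_{q(\cdot)}$ and $b=\|\omega\chi_B\|_{q(\cdot)}$, I apply Lemma \ref{p.omega} to $\omega\chi_E$ and $\omega\chi_B$ in the $q(\cdot)$-norm. This produces two-sided control between $a$ and $W(E)^{1/q_\pm(E)}$ and between $b$ and $W(B)^{1/q_\pm(B)}$, with the orientation (upper versus lower bound) determined by whether $a$, $b$ are at most or at least $1$. A case analysis on these sizes yields, modulo certain error factors,
\[
\frac{a}{b} \;\lesssim\; \left(\frac{W(E)}{W(B)}\right)^{1/q_+(E)} \cdot W(B)^{1/q_+(E)-1/q_-(B)},
\]
where the second factor encodes the discrepancy between the exponents $q_\pm$ on $E$ and on the ambient ball $B \supseteq E$.

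The crucial step is to bound that second factor by an absolute constant. Since $\omega \in A_{q(\cdot)}$ by \eqref{cha.1}, Lemma \ref{fracexp} applied with exponent $q(\cdot)$ gives $b^{q_-(B)-q_+(B)} \lesssim 1$; combined with the lower bound $W(B) \geq b^{q_+(B)}$ from Lemma \ref{p.omega} (valid when $b\leq 1$), a short computation shows that the error factor is $O(1)$. Lemma \ref{2Log_3} furnishes the parallel bound $\mu(B)^{q_-(B)-q_+(B)} \lesssim 1$, which enters symmetrically. Substituting back into the consequence of Lemma \ref{normbound} then produces the $A_\infty$ estimate with $\epsilon = 1/((1-\eta)q_+)$, where $q_+$ denotes the global supremum of $q(\cdot)$.

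The main obstacle is the subcase $b \geq 1$, in which Lemma \ref{fracexp} becomes vacuous and the error factor is not directly controlled. I expect to handle this either by a scaling normalization --- replacing $\omega$ by $\omega/b$, which preserves $[\omega]_{A_{p(\cdot),q(\cdot)}}$ and reduces the problem to the normalized situation $b=1$ while tracking how $W(E)/W(B)$ transforms via Lemma \ref{p.omega} --- or by invoking the dual relation $\omega^{-1} \in A_{p'(\cdot)}$ from \eqref{cha.2}, applying Lemma \ref{fracexp} to $\omega^{-1}$ in the $p'(\cdot)$-norm, and using the $A_{p(\cdot),q(\cdot)}$ condition itself to trade a large $b$ against a correspondingly small $\|\omega^{-1}\chi_B\|_{p'(\cdot)}$. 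Either route should close the remaining case and deliver the desired $A_\infty$ conclusion.
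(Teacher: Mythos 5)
Your overall strategy is the paper's: verify condition (2) of Lemma \ref{Ainfty} by combining Lemma \ref{normbound} with norm--modular conversions from Lemma \ref{p.omega}, handling the regime $\left\|\omega\chi_B\right\|_{q(\cdot)}\le 1$ (and the mixed case $\left\|\omega\chi_E\right\|_{q(\cdot)}\le 1\le\left\|\omega\chi_B\right\|_{q(\cdot)}$) via Lemma \ref{fracexp} and \eqref{cha.1}. Up to that point your sketch matches cases (i) and (ii) of the paper's proof.

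The gap is the case $1<\left\|\omega\chi_E\right\|_{q(\cdot)}\le\left\|\omega\chi_B\right\|_{q(\cdot)}$, which you correctly flag as the obstacle but do not actually close; neither of your two proposed escapes works. For the rescaling route: replacing $\omega$ by $\omega/b$ with $b=\left\|\omega\chi_B\right\|_{q(\cdot)}$ does preserve $[\omega]_{A_{p(\cdot),q(\cdot)}}$, but the new modular is $W_b(E)=\int_E\omega(x)^{q(x)}b^{-q(x)}\,d\mu$, which differs from $b^{-c}W(E)$ by a factor that ranges over $[b^{-q_+(B)},b^{-q_-(B)}]$. Undoing the normalization therefore costs a factor $b^{q_+(B)-q_-(B)}$, and for $b\ge 1$ there is no available bound for this: Lemma \ref{fracexp} controls $\left\|\omega\chi_B\right\|^{q_-(B)-q_+(B)}$, which is trivial (and useless) precisely when the norm exceeds $1$, and the reverse inequality is false in general (for large balls $q_+(B)-q_-(B)$ need not be small). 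For the duality route: the $A_{p(\cdot),q(\cdot)}$ condition and \eqref{cha.2} let you trade a large $b$ for a small $\left\|\omega^{-1}\chi_B\right\|_{p'(\cdot)}$, but nothing in that exchange relates $b$ back to the modular $W(B)=\int_B\omega^{q(x)}d\mu$, which is what condition (2) of Lemma \ref{Ainfty} requires. What is actually needed --- and what the paper proves in its case (iii) --- is the equivalence $\left\|\omega\chi_B\right\|_{q(\cdot)}\approx W(B)^{1/q_\infty}$ whenever the norm exceeds $1$ (Lemma \ref{chara.Apq}), together with $\left\|\omega\chi_E\right\|_{q(\cdot)}^{q_\infty}\lesssim W(E)$. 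This is obtained by applying Lemma \ref{2Log_2} with $\lambda=\left\|\omega\chi_B\right\|_{q(\cdot)}$, using Lemma \ref{unity} for the main term, and controlling the tail $\int_X W(x)\,(e+d(x_0,x))^{-tq_\infty}d\mu$ by decomposing $X$ into annuli $B_k\setminus B_{k-1}$ and invoking Lemma \ref{normbound} to bound the growth of $\left\|\omega\chi_{B_k}\right\|_{q(\cdot)}$. This is where the $LH_\infty$ hypothesis enters essentially; your sketch never uses it, which is a reliable sign that the hard case has not been addressed.
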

    \begin{proof}
    Fix a ball $B$ and a measurable set $E \subseteq B$. According to Lemma \ref{Ainfty}, in order to proof this lemma, it is sufficient to prove
    \begin{equation}\label{Apq.Ainfty_1}
        \left(\frac{\mu(E)}{\mu(B)}\right)^{1-\eta} \lesssim \left(\frac{W(E)}{W(B)}\right)^{1 / q_{+}}.
    \end{equation}
    We will prove this in three cases. 
    \begin{enumerate}[(i)]
       
        \item When $\left\|\omega \chi_B\right\|_{q(\cdot)} \leq 1$. By Lemma \ref{normbound},
    $$
    \left(\frac{\mu(E)}{\mu(B)}\right)^{1-\eta} \lesssim \frac{\left\|\omega \chi_E\right\|_{q(\cdot)}}{\left\|\omega \chi_B\right\|_{q(\cdot)}} =  \frac{\left\|\omega \chi_E\right\|_{q(\cdot)}}{\left\|\omega \chi_B\right\|_{q(\cdot)}^{q_{-}(B) / q_{+}(B)}\left\|\omega \chi_B\right\|_{p(\cdot)}^{1-q_{-}(B) / q_{+}(B)}} .
    $$
    From Lemma \ref{p.omega}, we have that $\left\|\omega \chi_E\right\|_{q(\cdot)} \leq W(E)^{1 / q_{+}(B)}$ and $\left\|\omega \chi_B\right\|_{q(\cdot)}^{q_-(B)} \geq W(B)$. It follows from Lemma \ref{fracexp} and \eqref{cha.1} that
    $$
    \left(\frac{\mu(E)}{\mu(B)}\right)^{1-\eta} \lesssim \left(\frac{W(E)}{W(B)}\right)^{1 / q_{+}(B)}\left\|\omega \chi_B\right\|_{q(\cdot)}^{q_{-}(B) / q_{+}(B)-1} \lesssim \left(\frac{W(E)}{W(B)}\right)^{1 / q_{+}}
    .$$
    \item When $\left\|\omega \chi_E\right\|_{q(\cdot)} \leq 1 \leq\left\|\omega \chi_B\right\|_{q(\cdot)}$, by Lemmas \ref{normbound} and \ref{p.omega} again, we have
    \begin{align*}
    \left(\frac{\mu(E)}{\mu(B)}\right)^{1-\eta} \lesssim \frac{\left\|\omega \chi_E\right\|_{q(\cdot)}}{\left\|\omega \chi_B\right\|_{q(\cdot)}} 
    \lesssim \frac{{W{{(E)}^{1/{q_ + }}}}}{{W{{(B)}^{1/{q_ + }(B)}}}}
    \le {\left( {\frac{{W(E)}}{{W(B)}}} \right)^{\frac{1}{{{q_ + }}}}}.
    \end{align*}

\item When $1<\|\omega \chi_E\|_{q(\cdot)} \leq \|\omega \chi_B\|_{q(\cdot)}$, define $\lambda=\|\omega \chi_B\|_{q(\cdot)} \geq \|\omega \chi_E\|_{q(\cdot)}$ and substitute the measure $d\mu$ with $W(x)d\mu$. Through Lemma \ref{2Log_2}, there is a constant $C_t$ satisfies
\begin{equation}\label{Apq.Ainfty_2}
    \int_B \frac{W(x)}{\lambda^{q_{\infty}}} d \mu \leq C_t \int_B \frac{W(x)}{\lambda^{q(x)}} d \mu + \int_B \frac{W(x)}{(e+d(x_0, x))^{t q_{\infty}}} d \mu.
\end{equation}
By Lemma \ref{unity}, we can know that the first term on the right side is less than 1. Therefore we now need to prove the second term also satisfies this bound, when we take large enough $t$, independent of $B$. For a finite $W(X)$,
\begin{equation*}
\int_X \frac{W(x)}{(e+d(x_0, x))^{t q_{\infty}}} d \mu \leq Ce^{-t q_{\infty}} W(X).
\end{equation*}
If $W(X)=\infty$, we define $B_k=B(x_0, 2^k)$ and it follows from Lemmas \ref{p.omega} and \ref{Levi} that $\mathop {\lim }\limits_{k \to \infty } {\left\| {\omega {\chi _{{B_k}}}} \right\|_{p( \cdot )}} = \infty$. Lemma \ref{p.omega} provides
\begin{align*}
\int_X \frac{W(x)}{(e+d(x_0, x))^{t q_{\infty}}} d \mu &\lesssim_t e^{-t q_{\infty}} W(B_0) + \sum_{k=1}^{\infty} \int_{B_k \setminus B_{k-1}} \frac{W(x)}{(e+d(x_0, x))^{t q_{\infty}}} d \mu \\
&\le e^{-t q_{\infty}} W(B_0) + \sum_{k=1}^{\infty} 2^{-ktq_{\infty}} W(B_k) \\
&\le e^{-t q_{\infty}} W(B_0) + \sum_{k=1}^{\infty} 2^{-ktq_{\infty}} \max \{\|\omega \chi_{B_k}\|_{q(\cdot)}^{q_{+}}, \|\omega \chi_{B_k}\|_{q(\cdot)}^{q_{-}}\}\\
&\lesssim e^{-t q_{\infty}} W(B_0) + \sum_{k=1}^{\infty} 2^{-ktq_{\infty}} \|\omega \chi_{B_k}\|_{q(\cdot)}^{q_{+}},
\end{align*}
where the last inequality is derived from this fact that since $\mathop {\lim }\limits_{k \to \infty } {\left\| {\omega {\chi _{{B_k}}}} \right\|_{p( \cdot )}} = \infty$, then there exists $N>0$, for any $k>N$, we have ${\left\| {\omega {\chi _{{B_k}}}} \right\|_{p( \cdot )}} > 1$.
By Lemma \ref{normbound},
\begin{equation*}
\|\omega \chi_{B_k}\|_{q(\cdot)} \leq C\left(\frac{\mu(B_k)}{\mu(B_0)}\right)^{1-\eta}\|\omega \chi_{B_0}\|_{q(\cdot)} \leq C 2^{k {(1-\eta)}\log_2 C_\mu}.
\end{equation*}
Hence, we have
\begin{equation}\label{Apq.Ainfty_3}
    \int_X \frac{W(x)}{(e+d(x_0, x))^{t q_{\infty}}} d \mu \lesssim e^{-t q_{\infty}} W(B_0) +  \sum_{k=1}^{\infty} 2^{kq_{+} {(1-\eta)}\log_2 C_\mu - ktq_{\infty}}.
\end{equation}
When $t > \frac{{{q_ + }(1 - \eta ){{\log }_2}{C_\mu }}}{{{q_\infty }}}$, the sum is convergent. The right-hand side of \eqref{Apq.Ainfty_2} becomes bounded, which means that
\begin{equation}\label{Apq.Ainfty_4}
    W(B)^{1/q_{\infty}} \lesssim \|\omega \chi_B\|_{q(\cdot)}.
\end{equation}
Replacing $B$ by $E$ and $q(\cdot)$ by $q_{\infty}$, we get
\begin{equation*}
1 \leq \int_E \frac{W(x)}{\lambda^{q(x)}} d \mu \leq C_t \int_E \frac{W(x)}{\lambda^{q_{\infty}}} d \mu + \int_E \frac{W(x)}{(e+d(x_0, x))^{t q_{\infty}}} d \mu.
\end{equation*}
It follows from the above that   \begin{equation}\label{Apq.Ainfty_5} \lambda^{q_{\infty}}=\left\|\omega \chi_E\right\|_{q(\cdot)}^{q_{\infty}} \lesssim W(E) .
    \end{equation}
Then by Lemma \ref{normbound},
    $$
    \left(\frac{\mu(E)}{\mu(B)}\right)^{1-\eta} \lesssim \frac{\left\|\omega \chi_E\right\|_{q(\cdot)}}{\left\|\omega \chi_B \right\|_{q(\cdot)}} \lesssim \left(\frac{W(E)}{W(B)}\right)^{1 / q_{\infty}} \le \left(\frac{W(E)}{W(B)}\right)^{1 / q_{+}} .
    $$
    \end{enumerate}
    \end{proof}

It follows instantly from the proof of Lemma \ref{Apq_Ainfty} that 
   \begin{lemma}\label{chara.Apq}
Let $p(\cdot),q(\cdot) \in \P_1 \cap L H$ and $ \frac{1}{p(\cdot)}-\frac{1}{q(\cdot)}=\eta \in[0,1)$. If $\omega \in A_{p(\cdot),q(\cdot)}$ satisfying $\left\|\omega \chi_B\right\|_{q(\cdot)} \geq 1$ for some ball $B$, then $\left\|\omega \chi_B\right\|_{q(\cdot)} \approx W(B)^{1 / q_{\infty}}$.
   \end{lemma}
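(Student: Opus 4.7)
The plan is to observe that both directions of the equivalence $\|\omega\chi_B\|_{q(\cdot)} \approx W(B)^{1/q_\infty}$ are already established inside case (iii) of the proof of Lemma \ref{Apq_Ainfty}, under exactly the single hypothesis $\|\omega\chi_B\|_{q(\cdot)} \geq 1$ that this corollary lemma assumes. So the proof is essentially an act of extraction: isolate the two inequalities that were used there, and verify that neither secretly uses a proper subset $E \subsetneq B$.

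For the lower bound $W(B)^{1/q_\infty} \lesssim \|\omega\chi_B\|_{q(\cdot)}$, I would quote \eqref{Apq.Ainfty_4} and point out that its derivation from \eqref{Apq.Ainfty_2} depended only on the property $\lambda = \|\omega\chi_B\|_{q(\cdot)} \geq 1$, which forces $\int_B W(x)/\lambda^{q(x)}\,d\mu \leq 1$ by Lemma \ref{unity}, together with the $LH_\infty$ tail estimate \eqref{Apq.Ainfty_3}; the latter is uniform in $B$ as soon as $t > q_+(1-\eta)\log_2 C_\mu / q_\infty$ is fixed.

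For the reverse bound $\|\omega\chi_B\|_{q(\cdot)} \lesssim W(B)^{1/q_\infty}$, I would apply the step that produced \eqref{Apq.Ainfty_5} with $E$ taken to be $B$ itself. That step needed only $\|\omega\chi_E\|_{q(\cdot)} \geq 1$, which under the present hypothesis becomes exactly $\|\omega\chi_B\|_{q(\cdot)} \geq 1$; it then returns $\|\omega\chi_B\|_{q(\cdot)}^{q_\infty} \lesssim W(B)$. Combining these two one-sided estimates gives the claimed equivalence.

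The main (and only) delicate point is to confirm that the constants produced in case (iii) of the previous proof are indeed uniform in $B$ and independent of any auxiliary subset $E$; once this is checked, the statement follows immediately without further computation.
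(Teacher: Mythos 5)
Your proposal is correct and matches the paper exactly: the paper itself states that Lemma \ref{chara.Apq} ``follows instantly from the proof of Lemma \ref{Apq_Ainfty},'' namely by reading off \eqref{Apq.Ainfty_4} for the lower bound and \eqref{Apq.Ainfty_5} (applied with $E=B$) for the upper bound. Your check that both estimates use only $\left\|\omega\chi_B\right\|_{q(\cdot)}\geq 1$ and that the tail constants are uniform in $B$ is precisely the verification the paper implicitly relies on.
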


\begin{lemma}\label{1}
	Let $p(\cdot),q(\cdot) \in \P_1 \cap L H$ and $ \frac{1}{p(\cdot)}-\frac{1}{q(\cdot)}=\eta \in[0,1)$, then $1 \in A_{p(\cdot),q(\cdot)}$.
\end{lemma}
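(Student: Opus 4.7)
The plan is to show that
\[
[1]_{A_{p(\cdot),q(\cdot)}} = \sup_{B\subseteq X}\mu(B)^{\eta-1}\|\chi_B\|_{q(\cdot)}\|\chi_B\|_{p'(\cdot)}
\]
is finite by reducing the two Luxemburg norms to explicit powers of $\mu(B)$ and then invoking the pointwise identity $\frac{1}{p(\cdot)}-\frac{1}{q(\cdot)}=\eta$.

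First I would compute $\|\chi_B\|_{q(\cdot)}$. Since $\int_X \chi_B(x)^{q(x)}\,d\mu = \mu(B)$, Lemma \ref{p.omega} applied to $\chi_B$ (splitting according to whether $\|\chi_B\|_{q(\cdot)}$ is at least or at most $1$, equivalently whether $\mu(B)\ge 1$ or $\mu(B)\le 1$) gives the two-sided bound
\[
\min\bigl\{\mu(B)^{1/q_-(B)},\,\mu(B)^{1/q_+(B)}\bigr\}\le \|\chi_B\|_{q(\cdot)}\le \max\bigl\{\mu(B)^{1/q_-(B)},\,\mu(B)^{1/q_+(B)}\bigr\}.
\]
Because $q(\cdot)\in LH$, Lemma \ref{2Log_3} provides $\mu(B)^{q_-(B)-q_+(B)}\lesssim 1$, so the two extremes differ by a uniform multiplicative constant; hence $\|\chi_B\|_{q(\cdot)}\approx \mu(B)^{1/q_+(B)}$ with an implicit constant independent of $B$. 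The same reasoning applied to $p'(\cdot)$ in place of $q(\cdot)$ yields $\|\chi_B\|_{p'(\cdot)}\approx \mu(B)^{1/p'_+(B)}=\mu(B)^{1-1/p_-(B)}$.

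Next, since $1/p(x)-1/q(x)\equiv\eta$ on $X$, the essential suprema over $B$ satisfy $1/p_+(B)=1/q_+(B)+\eta$, because the set where $1/p$ attains its essential supremum over $B$ coincides, up to null sets, with the set where $1/q$ does. Plugging in,
\[
\mu(B)^{\eta-1}\|\chi_B\|_{q(\cdot)}\|\chi_B\|_{p'(\cdot)}\approx \mu(B)^{\eta-1+1/q_+(B)+1-1/p_+(B)} = \mu(B)^{\eta+1/q_+(B)-1/p_+(B)} = \mu(B)^0 = 1,
\]
uniformly in $B$. Taking the supremum gives $[1]_{A_{p(\cdot),q(\cdot)}}<\infty$, i.e., $1\in A_{p(\cdot),q(\cdot)}$.

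The only delicate point is the degenerate regime $p_-(B)=1$ (so that $p'(\cdot)$ takes the value $+\infty$ on a subset of $B$ and Lemma \ref{p.omega} does not apply to $p'(\cdot)$ directly). The remedy is to split $B=(\{p=1\}\cap B)\cup(\{p>1\}\cap B)$, treat the $L^\infty$ contribution of the Luxemburg modular on $\{p=1\}\cap B$ separately (it contributes a uniformly bounded factor, trivially consistent with $\mu(B)^{1-1/p_-(B)}=\mu(B)^0=1$ when $p_-(B)=1$), and apply Lemma \ref{p.omega} on the complementary set; the final exponent arithmetic is unaffected. This is the main, though ultimately cosmetic, technical obstacle.
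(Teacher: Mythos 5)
Your argument is fine in the regime $\mu(B)\le 1$, where it coincides with the paper's: there $\left\|\chi_B\right\|_{q(\cdot)}\le\mu(B)^{1/q_+(B)}$, $\left\|\chi_B\right\|_{p'(\cdot)}\le\mu(B)^{1-1/p_-(B)}$, the pointwise identity gives the exponent $1/p_+(B)-1/p_-(B)$, and Lemma \ref{2Log_3} controls $\mu(B)^{p_-(B)-p_+(B)}$. (There is a small slip where $\mu(B)^{1/p'_+(B)}=\mu(B)^{1-1/p_-(B)}$ silently becomes $\mu(B)^{1-1/p_+(B)}$ in the product, but for small balls this discrepancy is exactly the quantity Lemma \ref{2Log_3} absorbs, so it is cosmetic.)

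The genuine gap is the case $\mu(B)>1$. Your key step is that $\mu(B)^{1/q_-(B)}$ and $\mu(B)^{1/q_+(B)}$ "differ by a uniform multiplicative constant" by Lemma \ref{2Log_3}. That lemma bounds $\mu(B)^{q_-(B)-q_+(B)}$, whose exponent is nonpositive; for $\mu(B)\ge 1$ the bound is trivially true and gives no information about $\mu(B)^{q_+(B)-q_-(B)}$, which is what your equivalence requires and which is in general unbounded: take $X=\rn$, $q(\cdot)$ nonconstant, and $B=B(0,R)$ with $R\to\infty$, so that $q_+(B)-q_-(B)$ stays bounded below by a positive constant while $\mu(B)\to\infty$. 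Consequently $\left\|\chi_B\right\|_{q(\cdot)}\approx\mu(B)^{1/q_+(B)}$ is false for large balls, and even the one-sided bounds you actually need produce the exponent $1/p_-(B)-1/p_+(B)\ge 0$, which is useless when $\mu(B)$ is large. This is precisely where the $LH_\infty$ hypothesis must enter: the paper disposes of $\mu(B)>1$ by the argument of Lemma \ref{chara.Apq} (i.e.\ case (iii) of the proof of Lemma \ref{Apq_Ainfty} specialized to $\omega=1$), which yields $\left\|\chi_B\right\|_{q(\cdot)}\approx\mu(B)^{1/q_{\infty}}$ and $\left\|\chi_B\right\|_{p'(\cdot)}\approx\mu(B)^{1-1/p_{\infty}}$, whence the exponent $\eta+1/q_{\infty}-1/p_{\infty}$ vanishes exactly. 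Your proof needs this separate large-ball argument; the $p_-(B)=1$ issue you flag at the end is, by comparison, minor.
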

\begin{proof}
If $\mu (B) \le 1$, it follows from Lemma \ref{p.omega} that 
$\left\| {{\chi _B}} \right\|_{q( \cdot )}^{{q_ + }(B)} \le \mu (B)$ and $\left\| {{\chi _B}} \right\|_{p'( \cdot )}^{{{\left( {p'} \right)}_ + }(B)} \le \mu (B)$.
By Lemma \ref{2Log_3}, 
\[\mu {(B)^{\eta  - 1}}{\left\| {{\chi _B}} \right\|_{q( \cdot )}}{\left\| {{\chi _B}} \right\|_{p'( \cdot )}} \le \mu {(B)^{\frac{1}{{{q_ + }(B)}} + \eta  - \frac{1}{{{p_ + }(B)}}}} = \mu {(B)^{\frac{{{p_{+}}(B){\rm{ - }}{p_ - }(B)}}{{{p_ + }(B){p_{\rm{ - }}}(B)}}}} \le C.\]

If $\mu (B) > 1$, it follows from the proof of Lemma \ref{chara.Apq} that
\[\mu {(B)^{\eta  - 1}}{\left\| {{\chi _B}} \right\|_{q( \cdot )}}{\left\| {{\chi _B}} \right\|_{p'( \cdot )}} \le C.\]

\end{proof}

\begin{remark}
In the proof of the main theorems, we will always combine the above lemmas with \eqref{cha.1} and \eqref{cha.2} to apply it.
\end{remark}

\subsection{Dyadic Analysis}\label{2.2}
~

This classical dyadic cubes defined as
$$
Q = {2^k}([0,1)^n + m), \quad k\in \mathbb{Z}, m\in \Z^n.
$$
These constructs play an essential role in constructing our main theorem. The following discussion adopts the framework of dyadic cubes as formulated by Hytönen and Kairema \cite{Hy2012}, as explicated in \cite{And2015}.
\begin{lemma}[\cite{And2015}, Theorem 2.1]\label{cubes}
    There exist a family $\mathcal{D}=\bigcup_{k \in \mathbb{Z}} \mathcal{D}_k$, composed of subsets of $X$,  such that:
    \begin{enumerate}
        \item For cubes $Q_1, Q_2 \in \mathcal{D}$, either $Q_1 \cap Q_2=\varnothing$, $Q_1 \subseteq Q_2$, or $Q_2 \subseteq Q_1$.
        \item The cubes $Q \in \mathcal{D}_k$ are pairwise disjoint. And for any $k\in \mathbb{Z}$, $X=\bigcup_{Q \in \mathcal{D}_k} Q$. We call $\mathcal{D}_k$ as the $k$th generation.
        
        \item For any $Q_1\in \mathcal{D}_k$, there always exists at least one child of $Q_1$ in $\mathcal{D}_{k+1}$, such that $Q_2 \subseteq Q_1$, and there always exists exactly one parent of $Q_1$ in $\mathcal{D}_{k-1}$, such that $Q_1 \subseteq Q_3$.
        \item If $Q_2$ is a child of $Q_1$, then for a constant $0<\epsilon<1$, depended on the set $X$, $\mu\left(Q_2\right) \geq \epsilon \mu\left(Q_1\right)$. 
        \item For every $k\in \mathbb{Z}$ and $Q \in \mathcal{D}_k$, there exists constants $C_d$ and $d_0>1$, such that
        $$
        B\left(x_c(Q), d_0^k\right) \subseteq Q \subseteq B\left(x_c(Q), C_d d_0^k\right),
        $$
        where $x_c$ denotes the centre of cube $Q\in \mathcal{D}$.
    \end{enumerate}
    
    We call the family $\mathcal{D}$ as dyadic grid and the cubes $Q\in \mathcal{D}$ as dyadic cubes.
\end{lemma}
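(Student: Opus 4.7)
The plan is to follow the Hytönen--Kairema construction \cite{Hy2012}, which itself refines Christ's original dyadic cubes on spaces of homogeneous type. I would not try to reprove the result from scratch (we are quoting it), but to sketch the idea: the dyadic system is built from a nested family of maximal nets, with cubes defined as hierarchical Voronoi regions and the required properties verified one by one.

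First I would fix a parameter $d_0>1$ (large, depending on the quasi-metric constant $A_0$) and, for each $k\in\Z$, select a countable set of ``centres'' $\{x_\alpha^k\}_\alpha\subseteq X$ that is maximal with respect to the separation $d(x_\alpha^k,x_\beta^k)\ge d_0^k$ for $\alpha\ne\beta$; maximality gives the covering property $X=\bigcup_\alpha B(x_\alpha^k,d_0^k)$. The nets are chosen consistently across scales so that every centre at level $k+1$ may be canonically assigned to a unique ``parent'' centre at level $k$ within distance $\lesssim d_0^k$, producing a tree structure on $\bigcup_k\{x_\alpha^k\}$.

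Next I would define the dyadic cubes via a hierarchical nearest-centre rule: each $x\in X$ at scale $k$ is assigned to the nearest $x_\alpha^k$, with ties broken in a way consistent with the parent map so that ancestors agree. The cube $Q_\alpha^k$ is then the set of all points assigned to $x_\alpha^k$ at scale $k$. Properties (1)--(3) (nestedness, partitioning of each generation, existence and uniqueness of parents/children) fall out of this construction, while the ball sandwich (5) follows from the separation/covering pair and the quasi-triangle inequality, with constant $C_d$ absorbing $A_0$ and the choice of $d_0$.

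The main obstacle is property (4), the lower bound $\mu(Q_2)\ge\epsilon\,\mu(Q_1)$ for a child $Q_2$ of a parent $Q_1$, since a priori a child could have negligible mass. The resolution uses the ball sandwich together with the doubling condition on $\mu$: both $Q_1$ and $Q_2$ are comparable to balls centred at nearby points whose radii differ by a factor of $d_0$, so $\mu(Q_1)\lesssim\mu(B(x_c(Q_1),C_dd_0^k))\lesssim C_\mu^{N}\mu(B(x_c(Q_2),d_0^{k+1}))\lesssim C_\mu^{N}\mu(Q_2)$ for some $N=N(d_0,C_d,A_0)$, giving the uniform lower bound $\epsilon=C_\mu^{-N}$. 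Once (4) is established, the family $\mathcal{D}=\bigcup_k\mathcal{D}_k$ is the desired dyadic grid, and one invokes \cite{And2015,Hy2012} for the detailed bookkeeping.
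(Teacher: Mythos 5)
The paper does not actually prove this lemma---it is quoted verbatim from \cite{And2015} (Theorem 2.1), which in turn rests on the Hyt\"onen--Kairema construction \cite{Hy2012}---and your sketch correctly outlines exactly that construction: maximal separated nets at each scale, a consistent parent map, hierarchical Voronoi-type cubes giving properties (1)--(3) and (5), and the ball sandwich combined with doubling to obtain the child mass bound (4). The one point worth flagging is the scaling convention: with $d_0>1$ taken literally, a generation-$(k+1)$ cube would contain a \emph{larger} ball than its generation-$k$ parent, so both your sketch and the paper's statement implicitly require radii $d_0^{-k}$ (equivalently $0<\delta<1$, as in the cited sources) for the parent/child hierarchy in (3)--(4) to be consistent.
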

Frequently, the sets of cubes and balls are interchangeable, as demonstrated by the equivalent formulation of the $A_{p(\cdot),q(\cdot)}$ condition.
\begin{lemma}
 \label{Apqcube}
Let $p(\cdot), q(\cdot) \in \mathscr{P}_0 \cap LH$, $\frac{1}{p(\cdot)}-\frac{1}{q(\cdot)}=\eta \in[0,1)$, and $\mathcal{D}$ is a dyadic grid. If $\omega \in A_{p(\cdot), q(\cdot)}$, then $\omega \in {A_{p( \cdot ),q( \cdot )}^{\mathcal D}}$, that is,
$$
{\left[ \omega  \right]_{A_{p( \cdot ),q( \cdot )}^{\mathcal D}}}: = \mathop {\sup }\limits_{Q \in {\mathcal D}} \mu {(Q)^{\eta  - 1}}{\left\| {\omega {\chi _Q}} \right\|_{q( \cdot )}}{\left\| {{\omega ^{ - 1}}{\chi _Q}} \right\|_{p'( \cdot )}} < \infty.
$$
\end{lemma}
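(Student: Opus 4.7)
The plan is to dominate each dyadic cube $Q\in\mathcal{D}$ by a single ball of comparable measure and then invoke the continuous $A_{p(\cdot),q(\cdot)}$ condition. For a cube $Q\in\mathcal{D}_k$, property (5) of Lemma \ref{cubes} furnishes both an inner ball $B_Q^{\mathrm{in}}:=B(x_c(Q),d_0^k)\subseteq Q$ and an outer ball $B_Q:=B(x_c(Q),C_d d_0^k)\supseteq Q$. Since $Q\subseteq B_Q$, the monotonicity of the Luxemburg norm gives
\[
\|\omega\chi_Q\|_{q(\cdot)}\le \|\omega\chi_{B_Q}\|_{q(\cdot)},\qquad
\|\omega^{-1}\chi_Q\|_{p'(\cdot)}\le \|\omega^{-1}\chi_{B_Q}\|_{p'(\cdot)}.
\]

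Next I would establish that $\mu(Q)\approx\mu(B_Q)$. The inclusion $Q\subseteq B_Q$ yields $\mu(Q)\le\mu(B_Q)$, while the doubling property applied to the concentric pair $B_Q^{\mathrm{in}}\subseteq B_Q$ a bounded number of times (determined by $C_d$ and $d_0$) gives $\mu(B_Q)\le C\mu(B_Q^{\mathrm{in}})\le C\mu(Q)$, with $C$ depending only on $C_\mu$, $C_d$, and $d_0$. Since $\eta-1\le 0$, this comparability also yields $\mu(Q)^{\eta-1}\lesssim\mu(B_Q)^{\eta-1}$.

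Combining the three estimates,
\[
\mu(Q)^{\eta-1}\|\omega\chi_Q\|_{q(\cdot)}\|\omega^{-1}\chi_Q\|_{p'(\cdot)}
\lesssim \mu(B_Q)^{\eta-1}\|\omega\chi_{B_Q}\|_{q(\cdot)}\|\omega^{-1}\chi_{B_Q}\|_{p'(\cdot)}
\le [\omega]_{A_{p(\cdot),q(\cdot)}},
\]
with an implicit constant independent of $Q$. Taking the supremum over $Q\in\mathcal{D}$ produces $[\omega]_{A_{p(\cdot),q(\cdot)}^{\mathcal{D}}}\lesssim[\omega]_{A_{p(\cdot),q(\cdot)}}<\infty$, which is exactly what the lemma asserts.

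There is no genuine obstacle in this argument: the proof is a routine ball-cube replacement that only exploits the geometric sandwich in Lemma \ref{cubes}(5) together with the doubling property of $\mu$, and in particular neither the log-Hölder hypotheses on $p(\cdot),q(\cdot)$ nor the relation $\tfrac{1}{p(\cdot)}-\tfrac{1}{q(\cdot)}=\eta$ play any role beyond making the defining quantity of $A_{p(\cdot),q(\cdot)}$ meaningful.
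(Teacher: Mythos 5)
Your argument is correct and follows essentially the same route as the paper: enlarge the cube $Q$ to the outer ball $B(x_c(Q),C_d d_0^k)$, apply the continuous $A_{p(\cdot),q(\cdot)}$ condition there, and then return to $\mu(Q)$ using the comparability $\mu(B(x_c(Q),C_d d_0^k))\approx\mu(B(x_c(Q),d_0^k))\le\mu(Q)$ (the paper phrases this via the lower mass bound, Lemma \ref{LMB.}, which is exactly the iterated doubling you invoke). Your closing observation that the log-H\"older hypotheses are not actually used here is also consistent with the paper's proof.
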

\begin{proof}
Using Theorem \ref{cubes} with fixing $Q \in \mathcal{D}_k$ and Lemma \ref{LMB.},
\begin{align*}
& \left\|\omega \chi_Q\right\|_{q(\cdot)}\left\|\omega^{-1} \chi_Q\right\|_{p^{\prime}(\cdot)} \leq\left\|\omega \chi_{B\left(x_c(Q), C_d d_0^k\right)}\right\|_{q(\cdot)}\left\|\omega^{-1} \chi_{B\left(x_c(Q), C_d r d_0^k\right)}\right\|_{p^{\prime}(\cdot)} \\
& \lesssim \mu\left(B\left(x_c(Q), C d_0^k\right)\right)^{1-\eta} \lesssim \mu\left(B\left(x_c(Q), d_0^k\right)\right)^{1-\eta} \lesssim \mu(Q)^{1-\eta} .
\end{align*}
\end{proof}
In the proof of Lemma \ref{Apqcube}, we initially expand cubes to encompass balls, subsequently applying the lower mass bound (see Lemma \ref{LMB.}) to switch back to cube dimensions. Additionally, this approach allows for the maximal operator to be efficiently reformulated in dyadic terms.
\begin{definition}
Let $\eta \in[0,1)$, $\sigma$ is a weight, and $\mathcal{D}$ is a dyadic grid. Define the weighted dyadic fractional maximal operator $M_{\eta, \sigma}^{\mathcal{D}}$ by
$$
M_{\eta, \sigma}^{\mathcal{D}} f(x)=\sup _{\substack{x \in Q \in \mathcal{D}}}{\sigma(Q)}^{\eta-1} \int_Q|f(y)| \sigma d\mu.
$$
When $\eta=0, M_{0, \sigma}^{\mathcal{D}}=M_\sigma^{\mathcal{D}}$, which is a weighted dyadic maximal operator.
When $\sigma=1, M_{\eta, \sigma}^{\mathcal{D}}=M_\eta^{\mathcal{D}}$, which is a dyadic fractional maximal operator.
\end{definition}

The following lemma can guarantee that we always transform a proof involving $M_{\eta}$ into that for $M_{\eta}^{\mathcal D_i}$.
\begin{lemma}[\cite{Kai2013}, Lemma 7.8]\label{Suff_1}
Let $\eta \in[0,1)$, there exists a finite family $\left\{\mathcal{D}_i\right\}_{i=1}^N$ of dyadic grids such that
	$$
	M_\eta f(x) \approx \sum_{i=1}^N M_\eta^{\mathcal{D}_i} f(x),
	$$
	where the implicit constants depend only $X$, $\mu$, and $\eta$.
\end{lemma}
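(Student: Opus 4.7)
The plan is to establish the equivalence via the two one-sided inequalities $\sum_{i=1}^N M_\eta^{\mathcal{D}_i} f(x) \lesssim M_\eta f(x)$ and $M_\eta f(x) \lesssim \sum_{i=1}^N M_\eta^{\mathcal{D}_i} f(x)$ separately. The first is a routine consequence of part (5) of Lemma \ref{cubes}, while the second is the main obstacle and rests on the Hyt\"onen--Kairema construction of a finite family of \emph{adjacent} dyadic grids -- precisely the deep ingredient that the cited result packages.

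For the dyadic-to-continuous direction, fix any $i$ and any cube $Q \in \mathcal{D}_i$ with $x \in Q$; say $Q$ lies in the $k$-th generation of $\mathcal{D}_i$. Property (5) of Lemma \ref{cubes}, applied inside the grid $\mathcal{D}_i$, produces the ball $B_Q := B(x_c(Q), C_d d_0^k)$ with $Q \subseteq B_Q$, and since $B(x_c(Q), d_0^k) \subseteq Q$ the doubling condition on $\mu$ yields $\mu(B_Q) \lesssim \mu(Q)$. Because $\eta - 1 < 0$,
\begin{equation*}
\mu(Q)^{\eta-1} \int_Q |f(y)| d\mu \lesssim \mu(B_Q)^{\eta-1} \int_{B_Q} |f(y)| d\mu \le M_\eta f(x).
\end{equation*}
Taking the supremum over $Q \in \mathcal{D}_i$ with $Q \ni x$ and then summing over $i = 1, \ldots, N$ gives the claim with a constant depending only on $N$ and the geometric data of $(X,d,\mu)$.

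For the reverse direction, I would invoke the adjacent-dyadic-systems theorem: there exist finitely many dyadic grids $\mathcal{D}_1, \ldots, \mathcal{D}_N$ on $(X,d,\mu)$ with the property that every ball $B = B(y,r) \subseteq X$ is contained in some $Q \in \mathcal{D}_i$ (for some $i$) satisfying $\diam(Q) \lesssim r$, whence $\mu(Q) \lesssim \mu(B)$ by doubling. Given $x \in X$ and any ball $B \ni x$ contributing to $M_\eta f(x)$, select such a cube $Q \in \mathcal{D}_i$. Since $x \in B \subseteq Q$, $\eta - 1 < 0$, and $\mu(Q) \lesssim \mu(B)$,
\begin{equation*}
\mu(B)^{\eta-1} \int_B |f(y)| d\mu \le \mu(B)^{\eta-1} \int_Q |f(y)| d\mu \lesssim \mu(Q)^{\eta-1} \int_Q |f(y)| d\mu \le M_\eta^{\mathcal{D}_i} f(x) \le \sum_{i=1}^N M_\eta^{\mathcal{D}_i} f(x).
\end{equation*}
Passing to the supremum over balls $B \ni x$ closes the argument.

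The entire technical weight of the lemma sits in the existence of the family $\{\mathcal{D}_i\}_{i=1}^N$ enjoying the ``every ball fits inside a cube of comparable diameter'' property. On $\mathbb{R}^n$ this is the classical ``$3^n$ shifted grids'' trick, but on a general space of homogeneous type it requires the intricate iterative covering procedure of Hyt\"onen--Kairema, and that is the step I would defer to the cited reference as a black box. Once it is in hand, the two displayed estimates above -- each a direct consequence of doubling together with the nested containment between cubes and balls from Lemma \ref{cubes}(5) -- carry out all the remaining work.
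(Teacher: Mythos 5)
Your proposal is correct and matches the intended argument: the paper itself offers no proof of this lemma, quoting it directly from Kairema's work, and your two-sided derivation (the easy direction from the ball--cube sandwich of Lemma \ref{cubes}(5) plus doubling, the hard direction from the Hyt\"onen--Kairema adjacent-dyadic-systems theorem taken as a black box) is precisely the standard route the cited reference takes. The handling of the negative exponent $\eta-1$ when comparing $\mu(Q)$ and $\mu(B)$ is done in the right direction in both inequalities, so there is nothing to correct.
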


The following lemma is in \cite{Cruz2022}, which is a key tool used in after proof.
\begin{lemma}[\cite{Cruz2022}, Lemma 4.4]\label{M.eta.bound}
Let $\mathcal{D}$ is a dyadic grid, $\sigma$ is a weight, and $1<p<\infty$. Then the dyadic maximal operator $M_{\sigma}^{\mathcal{D}}$ is bounded on $L^p(X,\sigma)$, which is also bounded from $L^{1}(X,\sigma)$ to $WL^{1}(X,\sigma)$.
\end{lemma}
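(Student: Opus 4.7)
\textbf{Proof plan for Lemma \ref{M.eta.bound}.}

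The plan is to recognize $M_{\sigma}^{\mathcal{D}}$ as a dyadic martingale maximal operator adapted to the measure $d\sigma = \sigma\, d\mu$ and then derive both inequalities from a standard stopping-time argument together with Marcinkiewicz interpolation. Indeed, let $\mathcal{F}_k$ be the $\sigma$-algebra generated by the partition $\mathcal{D}_k$ (items (1)--(2) of Lemma~\ref{cubes} guarantee that $(\mathcal{F}_k)_{k\in\mathbb Z}$ is a filtration). For $x\in Q\in\mathcal{D}_k$ the conditional expectation with respect to $d\sigma$ is exactly $\sigma(Q)^{-1}\int_{Q}|f|\,\sigma d\mu$, so
\[
M_{\sigma}^{\mathcal{D}} f(x)=\sup_{k}\,\mathbb{E}_{\sigma}[|f|\mid \mathcal{F}_k](x).
\]

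First I would prove the weak-type $(1,1)$ inequality. Fix $f\in L^{1}(X,\sigma)$ and $\lambda>0$, and consider the collection
\[
\mathcal{S}_\lambda=\Bigl\{Q\in\mathcal{D}:\ \sigma(Q)^{-1}\int_{Q}|f|\,\sigma d\mu>\lambda\Bigr\}.
\]
Every $Q\in\mathcal{S}_\lambda$ satisfies $\sigma(Q)\le \lambda^{-1}\|f\|_{L^1(X,\sigma)}$, so the $\sigma$-measures of cubes in $\mathcal{S}_\lambda$ are uniformly bounded. Combined with the fact that along any ancestor chain the averages tend to zero (because the numerator stays bounded by $\|f\|_{L^1(X,\sigma)}$ while $\sigma(Q')\to\infty$), this allows us to extract from $\mathcal{S}_\lambda$ the (inclusion-)maximal cubes $\{Q_j\}$. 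The dyadic nesting property (item (1) of Lemma~\ref{cubes}) makes these pairwise disjoint, and by construction $\{M_{\sigma}^{\mathcal{D}} f>\lambda\}=\bigcup_j Q_j$. Summing gives
\[
\sigma\bigl(\{M_{\sigma}^{\mathcal{D}} f>\lambda\}\bigr)=\sum_{j}\sigma(Q_j)\le \frac{1}{\lambda}\sum_j\int_{Q_j}|f|\,\sigma d\mu\le \frac{1}{\lambda}\,\|f\|_{L^1(X,\sigma)}.
\]

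For the strong-type bound on $L^p(X,\sigma)$, $1<p<\infty$, I would simply interpolate. The trivial estimate $\|M_{\sigma}^{\mathcal{D}} f\|_{L^\infty}\le \|f\|_{L^\infty}$ (each average is dominated by $\|f\|_{L^\infty}$) together with the weak-$(1,1)$ bound above gives the $L^p$ inequality via the Marcinkiewicz interpolation theorem.

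The only real obstacle is the \emph{existence of maximal cubes} in the stopping-time step, which can fail if $\sigma(X)<\infty$ and the ancestor chains in $\mathcal{D}$ never terminate at a top cube. I would address this by a standard truncation: replace $\mathcal{D}$ by the finite subfamily $\mathcal{D}^{N}=\bigcup_{|k|\le N}\mathcal{D}_k$ and prove the inequality for $M_{\sigma}^{\mathcal{D}^{N}}$ where maximal cubes trivially exist; then let $N\to\infty$ and apply monotone convergence. (Equivalently, one can cite Doob's martingale maximal inequality applied to the filtration $(\mathcal{F}_k)$, which handles the general measure-theoretic situation in one stroke.)
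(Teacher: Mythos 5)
The paper does not prove this lemma at all — it is imported by citation from \cite{Cruz2022}, Lemma 4.4 — so there is no in-paper argument to compare against; your proposal is the standard (and, in the cited reference, the actual) proof: stopping-time selection of maximal dyadic cubes for the weak $(1,1)$ bound with constant $1$, the trivial $L^\infty$ bound, and Marcinkiewicz interpolation, none of which requires any doubling of $\sigma\,d\mu$. Your flagging of the only delicate point — existence of maximal cubes when $\sigma(X)<\infty$, handled by truncating the grid to $\bigcup_{|k|\le N}\mathcal{D}_k$ and passing to the limit, or by invoking Doob's inequality for the filtration generated by the $\mathcal{D}_k$ — is exactly right, so the proposal is correct.
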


We now present the fractional-type Calderón-Zygmund decomposition on the spaces of homogeneous type as follows.
\begin{lemma}\label{CZD}
Let $\eta \in[0,1)$, $\mathcal{D}$ is a dyadic grid on $X$, and $\sigma \in A_{\infty}$.
Set $\mu(X) = \infty$. If $f \in L_{\text{loc}}^1(\sigma)$ satisfying $\mathop {\lim }\limits_{j \to \infty } \sigma {\left( {Q_j} \right)^{\eta  - 1}}\int_{{Q_j}} {\left| f \right|\sigma d\mu =0}$ for any nested sequence $\left\{Q_j \in \mathcal{D}\right\}_{j=1}^{\infty}$, where $Q_{j}$ is a child of $Q_{j+1}$, then for any $\lambda > 0$, there exists a (possibly empty) collection of mutually disjoint dyadic cubes $\left\{ {Q_j^{}} \right\}$, called Calderón-Zygmund cubes for $f$ at the height $\lambda$, and a constant $C_{CZ} > 1$, which is independent of $\lambda$ and dependent of $\mathcal{D}, X, \sigma$, such that
$$
X_{\eta, \lambda}^{\mathcal{D}}:=\left\{x \in X: M_{\eta,\sigma}^{\mathcal{D}} f(x)>\lambda\right\} =\bigcup_j Q_j .
$$
Moreover, for each $j$,
\begin{align}\label{CZ_1}
\lambda<\sigma {\left( {{Q_j}} \right)^{\eta  - 1}}\int_{{Q_j}} {\left| f \right|\sigma d\mu } \le C_{C Z} \lambda .
\end{align}
Now, suppose that $\left\{ {Q_j^k} \right\}$ is the Calderón-Zygmund cubes at height $a^k$ for each $k \in \mathbb{Z}$ and $a > C_{CZ}$. These sets, $E_j^k:=Q_j^k \setminus X_{\eta,a^{k+1}}^{\mathcal{D}}$, are mutually disjoint for all indices $j$ and $k$, such that 
\begin{align}\label{sigema}
\left( {1 - {{\left( {\frac{{{C_{cz}}}}{a}} \right)}^{\frac{1}{{1 - \eta }}}}} \right)\sigma (Q_j^k) \le \sigma (E_j^k) \le \sigma (Q_j^k).
\end{align}

If set $\mu(X) < \infty$, then Calderón-Zygmund cubes can be established for every function $f \in L_{loc}^1(\sigma)$ at any height $\lambda >\lambda_0:=\int_X {\left| f \right|\sigma d\mu }$, meanwhile, \eqref{CZ_1} also holds. Under these conditions, the sets $E_j^k$ are pairwise disjoint with \eqref{sigema} holds, for $k > \log_a \lambda_0$.
\end{lemma}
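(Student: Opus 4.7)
The plan is to execute a dyadic Calderón–Zygmund stopping-time argument on the grid from Lemma \ref{cubes}, with the usual Lebesgue average replaced by the fractional weighted average $\sigma(Q)^{\eta-1}\int_Q|f|\sigma\,d\mu$. The key auxiliary fact is that $\sigma\in A_\infty$ makes $d\sigma = \sigma\,d\mu$ a doubling measure via Lemma \ref{Ainfty}, and this doubling is what replaces the standard Lebesgue doubling throughout. For each $x\in X_{\eta,\lambda}^{\mathcal{D}}$ the family of dyadic cubes $Q\ni x$ satisfying $\sigma(Q)^{\eta-1}\int_Q|f|\sigma\,d\mu>\lambda$ is nonempty. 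When $\mu(X)=\infty$, the hypothesised vanishing of fractional averages along nested chains rules out unbounded ascents of selected cubes and hence produces a maximal such cube $Q_x$; when $\mu(X)<\infty$, Lemma \ref{finite-bounded} allows cubes to eventually fill $X$, and the assumption $\lambda>\lambda_0$ prevents $X$ itself from being selected, again yielding a maximal $Q_x$. Gathering the distinct $Q_x$ into $\{Q_j\}$ and invoking the nesting dichotomy of Lemma \ref{cubes} produces a pairwise disjoint family whose union is exactly $X_{\eta,\lambda}^{\mathcal{D}}$, and the lower inequality of \eqref{CZ_1} holds by the selection rule.

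For the upper bound in \eqref{CZ_1}, I pass to the dyadic parent $\tilde Q_j$ of $Q_j$. Maximality forces $\sigma(\tilde Q_j)^{\eta-1}\int_{\tilde Q_j}|f|\sigma\,d\mu\leq\lambda$, while the doubling of $\sigma$ gives $\sigma(\tilde Q_j)\leq C\sigma(Q_j)$, whence $\sigma(Q_j)^{\eta-1}\leq C^{1-\eta}\sigma(\tilde Q_j)^{\eta-1}$ since $\eta-1<0$. Combined with the monotonicity $\int_{Q_j}|f|\sigma\,d\mu\leq\int_{\tilde Q_j}|f|\sigma\,d\mu$, one reads off the upper bound with $C_{CZ}=C^{1-\eta}$.

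The estimate \eqref{sigema} is where the proof genuinely departs from the $\eta=0$ case and is the main obstacle. Apply the construction at heights $a^k$ to obtain $\{Q_j^k\}$. Disjointness of $\{E_j^k\}$ is routine: for fixed $k$ the $Q_j^k$ are already disjoint, while for $k_1<k_2$ one has $E_{j_2}^{k_2}\subseteq Q_{j_2}^{k_2}\subseteq X_{\eta,a^{k_2}}^{\mathcal{D}}\subseteq X_{\eta,a^{k_1+1}}^{\mathcal{D}}$, which is precisely what is excised from $Q_{j_1}^{k_1}$ to form $E_{j_1}^{k_1}$. For the measure bound, decompose
$$
Q_j^k\cap X_{\eta,a^{k+1}}^{\mathcal{D}}=\bigcup_{i:\,Q_i^{k+1}\subseteq Q_j^k}Q_i^{k+1}
$$
into a disjoint union, where each summand satisfies $a^{k+1}\sigma(Q_i^{k+1})^{1-\eta}<\int_{Q_i^{k+1}}|f|\sigma\,d\mu$ by the lower bound in \eqref{CZ_1}. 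Now invoke the subadditivity of $t\mapsto t^{1-\eta}$ on $[0,\infty)$, valid because $1-\eta\in(0,1]$ makes this map concave and vanishing at $0$:
$$
\Big(\sum_i\sigma(Q_i^{k+1})\Big)^{1-\eta}\leq\sum_i\sigma(Q_i^{k+1})^{1-\eta}<a^{-(k+1)}\int_{Q_j^k}|f|\sigma\,d\mu\leq\frac{C_{CZ}}{a}\,\sigma(Q_j^k)^{1-\eta},
$$
where the final step uses the already proven upper bound of \eqref{CZ_1}. Raising to the power $1/(1-\eta)$ and subtracting from $\sigma(Q_j^k)$ delivers \eqref{sigema}, and also explains both the exponent $1/(1-\eta)$ and the requirement $a>C_{CZ}$ (needed to make $1-(C_{CZ}/a)^{1/(1-\eta)}$ positive). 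The sole step with no counterpart in the classical ($\eta=0$) case is this subadditivity manoeuvre, which converts a sum of $(1-\eta)$-powers of $\sigma$-measures into a single power of the sum; handling this cleanly is the main technical point.
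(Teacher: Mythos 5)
Your proposal is correct and follows essentially the same route as the paper: a stopping-time selection of maximal dyadic cubes, the upper bound in \eqref{CZ_1} obtained by passing to the parent and using the doubling of $d\sigma$ coming from $\sigma\in A_\infty$ (Lemma \ref{Ainfty}) together with Lemma \ref{cubes}, and \eqref{sigema} derived from the subadditivity of $t\mapsto t^{1-\eta}$ applied to the disjoint cubes $Q_i^{k+1}\subseteq Q_j^k$ followed by the already-established upper bound of \eqref{CZ_1}. The "subadditivity manoeuvre" you single out as the novel point is precisely the computation in the paper's proof, so there is nothing further to add.
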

\begin{proof}

The first case is that $\mu (X) = \infty $. We only need to consider  $X_{\eta,\lambda}^D \ne \emptyset.$ Otherwise, we can take $\left\{ {{Q_j}} \right\}$ to be the empty sets.
	
As the property of the dyadic cube in Theorem \ref{cubes}, for every $x \in X_{\eta,\lambda}^D$, there exists a dyadic cube ${{Q_k^x}}$ of each generation $k > 0$, such that $x \in {{Q_k^x}}$ and $M_{\eta,\sigma}^{\mathcal{D}} f(x) > \lambda$. So there exist $k$, such that 
\begin{align}\label{CZ_2}
\sigma(Q_k^x)^{\eta-1}\int_{Q_k^x}|f(y)| d \sigma>\lambda.
\end{align}
Since
$
\lim _{k \rightarrow \infty} \sigma(Q_k^x)^{\eta-1}\int_{Q_k^x}|f(y)| d \sigma = 0,
$
then there are only finite k such that \eqref{CZ_2} holds.
Select $k$ to be the smallest integer such that \eqref{CZ_2} holds, in this case, we denote the cube with generation $k$ by $Q_x$. What's  more, the set $\left\{ {{Q_x}:x \in X_{\eta, \lambda}^{\mathcal{D}}} \right\}$ can be enumerated as $\left\{ {{Q_j}} \right\}$ due to there are countable dyadic cubes. If ${Q_i} \cap {Q_j} \ne \emptyset $, without loss of generality, we define ${Q_i} \subseteq {Q_j}$. Moreover, by the maximality, ${Q_i} = {Q_j}$.
Thus, the set $\left\{ {{Q_x}:x \in X_{\eta, \lambda}^{\mathcal{D}}} \right\}:=\left\{ {{Q_j}} \right\}$ is countably non-overlapping maximal dyadic cubes.
Hence, $X_{\eta, \lambda}^{\mathcal{D}}\subseteq \bigcup_j Q_j $.

On the other hand, if $z \in Q_x$, for some $x \in X_{\eta, \lambda}^{\mathcal{D}}$, then 
$$
\lambda < \sigma(Q_x)^{\eta-1}\int_{Q_x}|f(y)| d \sigma\le M_{\eta,\sigma}^{\mathcal{D}} f(z).
$$
Thus, $X_{\eta, \lambda}^{\mathcal{D}}= \bigcup_j Q_j $.

Next, we will prove \eqref{CZ_1}. The left inequality of \eqref{CZ_1} holds since the choice of $Q_j$. For the second inequality, by the maximality of each $Q_j$, we can deduce that its parent ${\tilde Q}_j$ satisfies
$$
\sigma({\tilde Q}_j)^{\eta-1}\int_{Q_j}|f(y)| d \sigma \leq \lambda
$$
It follows from Lemmas \ref{cubes} and \ref{LMB.} that
$$
\sigma(Q_j)^{\eta-1}\int_{Q_j}|f(y)| d \sigma \leq {\left( {\frac{{\sigma \left( {{{\tilde Q}_j}} \right)}}{{\sigma \left( {{Q_j}} \right)}}} \right)^{1 - \eta }} \lambda 
\leq {\left( {\frac{{\sigma \left( {B\left( {{x_c}\left( {{{\tilde Q}_j}} \right),Cd_0^{k + 1}} \right)} \right)}}{{\sigma \left( {B\left( {{x_c}\left( {{Q_j}} \right),d_0^k} \right)} \right)}}} \right)^{1 - \eta }} \lambda \leq {\left( {Cd_0^{{{\log }_2}{C_\mu }}} \right)^{1 - \eta }} \lambda.
$$
Consequencely, $\eqref{CZ_1}$ holds.

Setting $a > C_{CZ}$, we define the Calderón-Zygmund cubes $\{Q_j^k\}$ at heights $a^k$ for $k \in \mathbb{Z}$. We abbreviate $ X_{\eta,a^k}^{\mathcal{D}}$ to $X_k $. Given $Q_i^{k+1}$ and for any $x \in Q_i^{k+1}$, we have $Q_i^{k+1} \in \{Q_k^x\}$ (defined as above). It follows that there must be an index $j$ for which $Q_i^{k+1} \subseteq Q_j^k$.

Next, we want to show that the $E_j^k$ are pairwise disjoint for all $j,k$. Setting $k_1 \le k_2$, it suffices to prove that $E_{{j_1}}^{{k_1}} \cap E_{{j_2}}^{{k_2}} = \emptyset$ for $E_{{j_1}}^{{k_1}} \ne E_{{j_2}}^{{k_2}}$. If $k_1=k_2$ and $j_1\ne j_2$, then $Q_{{j_1}}^{{k_1}} \cap Q_{{j_2}}^{{k_2}} = \emptyset$ can deduce the desired results. 
If $k_1 < k_2$, then $E_{{j_1}}^{{k_1}} \subseteq {\left( {{X_{{k_1} + 1}}} \right)^c} \subseteq {\left( {{X_{{k_2}}}} \right)^c}$ and $E_{{j_2}}^{{k_2}} \subseteq {X_{{k_2}}}$ can deduce the desired results.

Finally, we will prove that $\sigma (Q_j^k) \approx \sigma (E_j^k)$.
It follows obviously from that
\begin{align*}
\sigma {(Q_j^k \cap {X_{k + 1}})^{1 - \eta }} &= {\left( {\sum\limits_{i:Q_i^{k + 1} \subseteq Q_j^k} {\sigma (Q_i^{k + 1})} } \right)^{1 - \eta }} \le \sum\limits_{i:Q_i^{k + 1} \subseteq Q_j^k} {{{\left( {\sigma (Q_i^{k + 1})} \right)}^{1 - \eta }}} \\
 &\le \frac{1}{{{a^{k + 1}}}}\sum\limits_{i:Q_i^{k + 1} \subseteq Q_j^k} {\int_{Q_i^{k + 1}} {\left| f \right|d\sigma } } \le \frac{1}{{{a^{k + 1}}}}\int_{Q_j^k} {\left| f \right|d\sigma }  \le \frac{{{C_{cz}}}}{a}\sigma {(Q_j^k)^{1 - \eta }}.
\end{align*}
Note that $\sigma (Q_j^k) = \sigma (Q_j^k \cap {X_{k + 1}}) + \sigma (E_j^k)$, then we have
$$\left( {1 - {{\left( {\frac{{{C_{cz}}}}{a}} \right)}^{\frac{1}{{1 - \eta }}}}} \right)\sigma (Q_j^k) \le \sigma (E_j^k) \le \sigma (Q_j^k).$$
\end{proof}

\section{\bf The proof of Theorems \ref{mainthm_0} and \ref{mainthm_1}}\label{proof}
\subsection{Necessity}\label{proof1}
~
In this subsection, we want to prove the Necessity of Theorem \ref{mainthm_0}. But actually, we prove the stronger claim, which is the necessity of Theorem \ref{mainthm_1}. In this proof, somewhere, we will use the sufficiency of Theorem \ref{mainthm_1}, whose proof can be referred to the next subsection \ref{proof2}. Now, we supppose that 
$M_\eta$ is bounded from $L^{p(\cdot)}(X, \omega)$ to $WL^{q(\cdot)}(X, \omega)$, which means that 
\begin{align}\label{WB-M}
\mathop {\sup }\limits_{t > 0} {\left\| {t\omega {\chi _{\left\{ {x \in X:{M_\eta }f(x) > t} \right\}}}} \right\|_{q( \cdot )}} \lesssim {\left\| {\omega f} \right\|_{p( \cdot )}}.
\end{align}
For following, it suffices to prove that $\omega \in A_{p(\cdot), q(\cdot)}$.

Firstly, we claim that for every $B \subseteq X$, 
\begin{align}\label{Nes.ineq.1}
{\left\| {\omega {\chi _B}} \right\|_{q( \cdot )}} < \infty.
\end{align} 
If ${\left\| {\omega {\chi _B}} \right\|_{q( \cdot )}} = \infty$. For any $x \in B$, there exist $E\subseteq B$, such that $x \in E$.
For any $t<\mu {(B)^{\eta  - 1}}\mu (E)$, then ${M_\eta }\chi_E(x) \ge \mu {(B)^{\eta  - 1}}\mu (E){\chi _B}(x)>t$. Moreover, it follows from \eqref{WB-M} that
$$
\infty = t{\left\| {\omega {\chi _B}} \right\|_{q( \cdot )}} \le {\left\| {t\omega {\chi _{\left\{ {x \in X:{M_\eta }\chi_E(x) > t} \right\}}}} \right\|_{q( \cdot )}} \lesssim {\left\| {\omega {\chi _E}} \right\|_{p( \cdot )}} \lesssim \mu {(E)^\eta }{\left\| {\omega {\chi _E}} \right\|_{q( \cdot )}} \le \infty.
$$

By Lemma~\ref{p.omega}, we have 
$$\mu {(E)^{ - 1}}\int_E \omega  {(x)^{q(x)}}{\mkern 1mu} d\mu  = \infty.$$
When $ E \to \left\{ x \right\}$, by the Lebesgue Differentiation Theorem, see \cite[Theorem 1.4]{Al2015}, we can find that $\omega(x)^{q(x)} = \infty$ for almost every $x$. This result clearly contridicts with the definition of a weight and therefore \eqref{Nes.ineq.1} is valid.

Secondly, we will show that $\omega \in A_{p( \cdot ),q( \cdot )}$. 

{\bf{Case 1: ${\left\| {{\omega ^{ - 1}}{\chi _B}} \right\|_{p'( \cdot )}}< \infty$.}} 

In this case, since the homogeneity, we assume that ${\left\| {{\omega ^{ - 1}}{\chi _B}} \right\|_{p'( \cdot )}}=1$. It suffices to prove that 
\begin{align}\label{Nes.ineq.2}
\mathop {\sup }\limits_{B \subseteq X} \mu {(B)^{\eta  - 1}}{\left\| {\omega {\chi _B}} \right\|_{q( \cdot )}} \lesssim 1
\end{align}

We define the following sets
\[ B_0 \equiv \{x \in B\,:\, p'(x) < \infty\}, \qquad B_\infty \equiv \{x \in B\,:\,p'(x) = \infty\}. \]
By the definition of the norm, for any $\lambda \in (\frac{1}{2},1)$,
$$1 \le {\rho _{p'( \cdot )}}\left( {\frac{{{\omega ^{ - 1}}{\chi _B}}}{\lambda }} \right) = \int_{{B_0}} {{{\left( {\frac{{\omega {{(x)}^{ - 1}}}}{\lambda }} \right)}^{p'(x)}}} {\mkern 1mu} d\mu  + {\lambda ^{ - 1}}{\left\| {{\omega ^{ - 1}}{\chi _{{B_\infty }}}} \right\|_\infty }.$$
At least one of the two terms on the right-hand side is not less than $\frac{1}{2}$. Furthermore, one of the following two situations must be true: either ${\left\| {{\omega ^{ - 1}}{\chi _{{B_\infty }}}} \right\|_\infty } \geq \frac{1}{2}$, or given $\lambda_0 \in (\frac{1}{2},1)$, then $\int_{B_0}\left(\frac{\omega(x)^{-1}}{\lambda}\right)^{p'(x)}\,d\mu \geq \frac{1}{2}$ for any $\lambda \in [\lambda_0,1)$.

Suppose that the first situation holds.
Set {\color{orange} 
$s > {\left\| {{\omega ^{ - 1}}{\chi _{{B_\infty }}}} \right\|_\infty } = \operatorname{essinf}\limits_{x \in {B_\infty }} \omega (x)$
}, there exists a subset $E \subseteq B_\infty$ with $\mu(E)>0$, such that $\mu {(E)^{ - 1}}\omega (E) \le s$. Note that $\pp$ is equal to 1 on $B_\infty$,
then ${\left\| {\omega {\chi _E}} \right\|_{p( \cdot )}} = \omega (E)$.
Then, for all $t < \mu {(B)^{\eta  - 1}}\mu (E)$, we have
${M_\eta }{\chi _E}(x) \ge \mu {(B)^{\eta  - 1}}\mu (E){\chi _B}(x)>t{\chi _B}(x)$.
Thus, it follows from \eqref{WB-M} that  
$$t{\left\| {\omega {\chi _B}} \right\|_{q( \cdot )}} \le {\left\| {t\omega {\chi _{\left\{ {x \in X:{M_\eta }{\chi _E}(x) > t} \right\}}}} \right\|_{q( \cdot )}} \lesssim {\left\| {\omega {\chi _E}} \right\|_{p( \cdot )}} = \omega (E).$$
Letting $t\to \mu {(B)^{\eta  - 1}}\mu (E)$, we get that
$\mu {(B)^{\eta  - 1}}\mu (E)\parallel \omega {\chi _B}{\parallel _{q( \cdot )}} \lesssim \omega (E).$
Then,
\[\mu {(B)^{\eta  - 1}}\parallel \omega {\chi _B}{\parallel _{q( \cdot )}} \lesssim \mu {(E)^{- 1}}\omega (E) \le s\]
Letting $s \to \parallel {\omega ^{ - 1}}{\chi _{{B_\infty }}}\parallel _\infty ^{ - 1}$, we have
\[\mu {(B)^{\eta  - 1}}{\left\| {\omega {\chi _B}} \right\|_{q( \cdot )}} \lesssim \left\| {{\omega ^{ - 1}}{\chi _{{B_\infty }}}} \right\|_\infty ^{ - 1} \le 2,\]
then \eqref{Nes.ineq.2} is valid.

When the second situation holds, we define $B_R = \{x \in B_0\,:\,p'(x) < R\}$, for any $R>1$. By Lemma~\ref{Levi}, there exists $R$ that close to $\infty$ sufficiently, such that $\int_{B_R} \left(\frac{\omega(x)^{-1}}{\lambda_0}\right)^{p'(x)}\,d\mu > \frac{1}{3}.$
It follows from ${\left\| {{\omega ^{ - 1}}{\chi _B}} \right\|_{p'( \cdot )}} = 1$ and Lemma~\ref{p.omega} that
\begin{align*}
	\int_{B_R}\left(\frac{\omega(x)^{-1}}{\lambda_0}\right)^{p'(x)}\,d\mu
	\le \int_{B_R}\left(\frac{2}{\lambda_0}\right)^{p'(x)}\left(\frac{\omega(x)^{-1}}{2}\right)^{p'(x)}\,d\mu  \le \left(\frac{2}{\lambda_0}\right)^R < \infty.
\end{align*}
We need to use the following auxiliary function
\[ G(\lambda) = \int_{B_R} \left(\frac{\omega(x)^{-1}}{\lambda}\right)^{p'(x)}\,d\mu, \]
where $\frac{1}{3} < G(\lambda_0) < \infty$. The Lebesgue dominated convergence theorem can deduce that $G$ is continuous on $[\lambda_0,1]$.

For any $\lambda \in [\lambda_0,1)$, if $G(1) \geq \frac{1}{3}$, by Lemma~\ref{p.omega}, 
\[ \frac{1}{3\lambda} \leq \frac{1}{\lambda}\int_{B_R}\omega(x)^{-p'(x)}\,d\mu \leq G(\lambda) \leq \lambda^{-R} < \infty. \]
Let $\lambda$ sufficiently close to 1, then $\lambda^{-R} \leq 2$ and
 \begin{align} 
	\frac{1}{3} \leq \int_{B_R}\left(\frac{\omega(x)^{-1}}{\lambda}\right)^{p'(x)}\,d\mu \leq 2. \label{finally} 
\end{align}
If $G(1) < \frac{1}{3}$, by continuity of $G$, there exists $\lambda \in (\lambda_0,1)$ such that $G(\lambda)=\frac{1}{3}$. Then \eqref{finally} holds for this $\lambda$ as well.

Fixed $\lambda$ and let 
$$f(x) = \omega {(x)^{ - p'(x)}}{\lambda ^{1 - p'(x)}}{\chi _{{B_R}}}.$$
Then
\[ \rho_{\pp}(\omega f) = \int_{B_R}\left(\frac{\omega(x)^{-1}}{\lambda}\right)^{p'(x)}\,d\mu  \leq 2. \]
By Lemma~\ref{p.omega}, ${\left\| {\omega f} \right\|_{p( \cdot )}} \le {2^{\frac{1}{{{{(p')}_ - }}}}}$. For any $x \in B$,
\[ {M_\eta }f(x) \ge \mu {(B)^{\eta -1  }}\int_B {fd\mu}  =  
\lambda \mu {(B)^{\eta -1  }}\int_{{B_R}} {{{(\frac{{\omega {{(x)}^{ - 1}}}}{\lambda })}^{p'(x)}}d\mu} 
\geq \frac{\lambda }{3}\mu {(B)^{\eta  - 1}}. \]
For any $t < \frac{\lambda }{3}\mu {(B)^{\eta  - 1}}$, it follows from \eqref{WB-M}  that
\[t{\left\| {\omega {\chi _B}} \right\|_{q( \cdot )}} \le {\left\| {t\omega {\chi _{\left\{ {x \in X:{M_\eta }f(x) > t} \right\}}}} \right\|_{q( \cdot )}} \lesssim {\left\| {\omega f} \right\|_{p( \cdot )}} \le {2^{\frac{1}{{{{(p')}_ - }}}}}.\]
Letting $t \to \frac{\lambda }{3}\mu {(B)^{\eta  - 1}}$, \eqref{Nes.ineq.2} is valid.

{\bf{Case 2: ${\left\| {{\omega ^{ - 1}}{\chi _B}} \right\|_{p'( \cdot )}}= \infty$.}}

In this case, we will use the perturbation method to prove.

Given $\epsilon > 0$, denote the weight $\omega_\epsilon(x) = \omega(x)+\epsilon$. Then $\omega_\epsilon^{-1} \leq \epsilon^{-1} < \infty$ and so ${\left\| {{\omega_{\epsilon} ^{ - 1}}{\chi _B}} \right\|_{p'( \cdot )}} < \infty$. It follows immediately from the sufficiency of Theorem \ref{mainthm_0}, Lemma \ref{1}, and \eqref{WB-M} that
\begin{align*} 
t{\left\| {{\omega _\epsilon}{\chi _{\left\{ {x \in X:{M_\eta }f(x) > t} \right\}}}} \right\|_{q( \cdot )}} &\le t{\left\| {{\omega}{\chi _{\left\{ {x \in X:{M_\eta }f(x) > t} \right\}}}} \right\|_{q( \cdot )}} + {\epsilon}t{\left\| {{\chi _{\left\{ {x \in X:{M_\eta }f(x) > t} \right\}}}} \right\|_{q( \cdot )}}\\
&\lesssim {\left\| {\omega f} \right\|_{p( \cdot )}} + \epsilon{\left\| { f} \right\|_{p( \cdot )}} \le 2 {\left\| {\omega_\epsilon f} \right\|_{p( \cdot )}}
\end{align*}
This shows that $\omega_\epsilon$ satisfies \eqref{WB-M}. When ${\left\| {{\omega ^{ - 1}}{\chi _B}} \right\|_{p'( \cdot )}} < \infty$, it follows from \eqref{Nes.ineq.2} that $$\mathop {\sup }\limits_{B \subseteq X} \mu {(B)^{\eta  - 1}}{\left\| {\omega_\epsilon {\chi _B}} \right\|_{q( \cdot )}}{\left\| {{\omega_\epsilon ^{ - 1}}{\chi _B}} \right\|_{p'( \cdot )}} \le K,$$
where $K$ is actually independent of $\epsilon$. Thus,
\[\mu {(B)^{\eta  - 1}}{\left\| {\omega {\chi _B}} \right\|_{q( \cdot )}}{\left\| {{\omega_\epsilon ^{ - 1}}{\chi _B}} \right\|_{p'( \cdot )}} \le \mu {(B)^{\eta  - 1}}{\left\| {\omega_\epsilon {\chi _B}} \right\|_{q( \cdot )}}{\left\| {{\omega_\epsilon ^{ - 1}}{\chi _B}} \right\|_{p'( \cdot )}} \le K. \]
Letting $\epsilon \to 0$, by Lemma \ref{Levi} and $\omega_\epsilon^{-1}$ increases to $\omega^{-1}$, we have that ${\left[ \omega  \right]_{{A_{p( \cdot ),q( \cdot )}}}} \le K$.

This finishes the necessity of Theorems \ref{mainthm_0} and \ref{mainthm_1}.
\subsection{Sufficiency}\label{proof2}
~

The purpose of this section is to prove the sufficiency of Theorem \ref{mainthm_0}, which implicits the sufficiency of Theorem \ref{mainthm_1}. 
We will discuss the case for $\mu(X) < \infty$ at the end of this subsection. The initial focus will be on cases where $\mu(X) = \infty$.

{\bf{Case 1: $\mu(X) = \infty$.}} 
We first simplify some details with three steps.

{\bf{Step 1.}} 
Lemma \ref{Suff_1} implies that to establish the boundedness of $M_\eta$, it is sufficient to demonstrate the boundedness of $M_\eta^{\mathcal{D}}$. By the homogeneity, it suffices to consider that $f$ is a nonnegative function with $\|\omega f \|_{p(\cdot)}=1$. 

{\bf{Step 2.}} 
We introduce the weights $W(\cdot) = \omega(\cdot)^{q(\cdot)}$ and $\sigma(\cdot) = \omega(\cdot)^{-p'(\cdot)}$. According to Lemma \ref{Apq_Ainfty} and Lemma \ref{Ainfty},  $W(\cdot)$ and $\sigma(\cdot)$ are both in $A_{\infty}$ and satisfy the doubling property.

{\bf{Step 3.}} We also need to show that for any nested sequence $\left\{Q_k\in \mathcal{D}\right\}_{k=1}^{\infty}$ with $Q_{k}$ is a child of $Q_{k+1}$,
\begin{equation}\label{Suff.lim}
\mathop {\lim }\limits_{k \to \infty } \mu {({Q_k})^{\eta  - 1}}\int_{{Q_k}} f d\mu  = 0,
\end{equation}
which can guarantee us to use Lemma \ref{CZD}.

Indeed, since $W$ is doubling, if we fix a sequence with $k=1$, then 
\begin{equation*}
    W\left(Q_1\right) \leq W\left(B\left(x_c\left(Q_1\right), C_d d_0\right)\right) \leq C_W^{\log _2 C_d} W\left(B\left(x_c\left(Q_1\right), d_0\right)\right).
\end{equation*}
By Lemma \ref{cubes}, for any $k$, with the similar argument, we have
$$
\frac{1}{W\left(Q_k\right)} \lesssim \frac{1}{W\left(B\left(x_c\left(Q_k\right), C_d d_0^k\right)\right)}.
$$
Using lemma \ref{Ainfty} combining above two estimates, we get
$$
\frac{W\left(Q_1\right)}{W\left(Q_k\right)} \lesssim \frac{W\left(B\left(x_c\left(Q_1\right), d_0\right)\right)}{W\left(B\left(x_c\left(Q_k\right), C_d d_0^k\right)\right)} \lesssim \left(\frac{\mu\left(B\left(x_c\left(Q_1), d_0\right.\right)\right.}{\mu\left(B\left(x_c\left(Q_k\right), C_d d_0^k\right)\right)}\right)^\delta.
$$
If we rearrange and apply Lemma \ref{LMB.} (the lower mass bound), then
$$\mu\left(B\left(x_c\left(Q_1\right), C d_0^k\right)\right)^\delta \lesssim \mu\left(B\left(x_c\left(Q_k\right), C_d d_0^k\right)\right)^\delta \lesssim W\left(Q_k\right).$$
By continuity of $\mu$ and the fact that $X =\mathop {\lim }\limits_{k \to \infty } B\left( {{x_c}\left( {{Q_1}} \right),Cd_0^k} \right)$, we have $\mathop {\lim }\limits_{k \to \infty } W\left( {{Q_k}} \right)=\infty$. 

By the condition of $A_{p(\cdot),q(\cdot)}$, Lemma \ref{Holder}, and Lemma \ref{p.omega},
$$
\mu {\left( {{Q_k}} \right)^{\eta  - 1}}\int_{{Q_k}} {fd\mu }  \lesssim {\left[ \omega  \right]_{{A_{p( \cdot ),q( \cdot )}}}}{\left\| {\omega f} \right\|_{p( \cdot )}}\left\| {\omega {\chi _{{Q_k}}}} \right\|_{q( \cdot )}^{ - 1} \lesssim \left\| {\omega {\chi _{{Q_k}}}} \right\|_{q( \cdot )}^{ - 1}.
$$
Since Lemma \ref{p.omega} implies $\mathop {\lim }\limits_{k \to \infty } W\left( {{Q_k}} \right) = \mathop {\lim }\limits_{k \to \infty } \left\| {\omega {\chi _{{Q_k}}}} \right\|_{q( \cdot )}^{} = \infty$, $\eqref{Suff.lim}$ is valid.

Next, we decompose $f=f_1+f_2$, where $f_1=f \chi_{\left\{f \sigma^{-1}>1\right\}}$ and $f_2=$ $f \chi_{\left\{f \sigma^{-1} \leq 1\right\}}$. Lemma \ref{p.omega} can deduce that
\begin{equation}\label{Suff.ineq_0}
    \int_X\left|f_i(x)\right|^{p(x)} \omega(x)^{p(x)} d \mu \leq\left\|f_i \omega\right\|_{p(\cdot)} \leq \left\|f \omega\right\|_{p(\cdot)} = 1, \quad i=1,2.
\end{equation}

By Lemma \ref{p.omega} again and the sublinearity of $M_\eta^{\mathcal{D}}$, it suffices to show that 
\begin{equation}\label{Suff.ineq_1}
    \int_X\left(M_\eta^{\mathcal{D}} f_i(x)\right)^{q(x)} \omega(x)^{q(x)} d \mu \lesssim 1, \quad i=1,2,
\end{equation}
where the implicit constant is independent on $f$.

{\bf{Estimate for $f_1$:}}
Let $k\in \mathbb{Z}$ and $a>C_{C Z}>1$, and we define
$$
X_k=\left\{x \in X: M_\eta^{\mathcal{D}} f_1(x)>a^{k}\right\} .
$$
Since $f \in L_{ {loc }}^1$ and $\mathop {\lim }\limits_{k \to \infty } \mu {({Q_k})^{\eta  - 1}}\int_{{Q_k}} f d\mu  = 0$, then it follows that $M_\eta^{\mathcal{D}} f_1$ is finite almost everywhere (This result is similar to \cite[Remark 3.11]{red} and \cite[Remark 5.35]{blue}). 

It is apparent to see that
$$
\left\{x \in X: M_\eta^{\mathcal{D}} f_1(x)>0\right\}=\bigcup_{k\in \Z} {X_k}\backslash {X_{k + 1}}.
$$
Let $\{Q_j^k\}$ be the CZ cubes of $f_1$ at height $a^k$. Then by Lemma \ref{CZD}, for every $k$,
\begin{equation}\label{Suff.ineq_2}
{X_k} = \bigcup \limits_j Q_j^k.
\end{equation}
Set $E_j^k=Q_j^k \backslash X_{k+1}$, we find that
$$
X_k \backslash X_{k+1}=\bigcup_j E_j^k.
$$
It is obviously to get that
\begin{align}
&\int_X M_\eta^{\mathcal{D}} f_1(x)^{q(x)} \omega(x)^{q(x)} d \mu \notag\\
 =&\sum_k \int_{X_k \backslash X_{k+1}} M_\eta^{\mathcal{D}} f_1(x)^{q(x)} \omega(x)^{q(x)} d \mu \notag  \\
 \approx& \sum_k \int_{X_k \backslash X_{k+1}} a^{k q(x)} \omega(x)^{q(x)} d \mu \notag \\
 \approx& \sum_{k, j} \int_{E_j^k}\left(\int_{Q_j^k} f_1(y) \sigma(y)^{-1} \sigma(y) d \mu\right)^{q(x)} \mu\left(Q_j^k\right)^{(\eta-1)q(x)} {\omega}(x)^{q(x)} d \mu .\label{Suff.ineq_3}
\end{align}
Through the definition of $f_1,\sigma$ and (\ref{Suff.ineq_0}),
\begin{align*}
&\int_{Q_j^k} f_1(y) \sigma(y)^{-1} \sigma(y) d \mu&\leq \int_{Q_j^k}\left(f_1(y) \sigma(y)^{-1}\right)^{p(y)} \sigma(y) d \mu 
\leq \int_{Q_j^k} (f_1(y)\omega(y))^{p(y)}  d \mu \leq 1.
\end{align*}
Then,
\begin{align*}
& \sum_{k, j} \int_{E_j^k}\left(\int_{Q_j^k} f_1(y) \sigma(y)^{-1} \sigma(y) d \mu\right)^{q(x)} \mu\left(Q_j^k\right)^{(\eta-1)q(x)} {\omega}(x)^{q(x)} d \mu \\
 \leq& \sum_{k, j}\left(\int_{Q_j^k}\left(f_1(y) \sigma(y)^{-1}\right)^{p(y) / p_{-}\left(Q_j^k\right)} \sigma(y) d \mu\right)^{q_{-}\left(Q_j^k\right)} \int_{E_j^k} \mu\left(Q_j^k\right)^{(\eta-1)q(x)} {\omega}(x)^{q(x)} d \mu.
\end{align*}
Next, it follows from Hölder's inequality that the above
\begin{equation}\label{Suff.ineq_4}
\lesssim \sum_{k, j}\left(\avgint_{Q_j^k}\left(f_1(y) \sigma(y)^{-1}\right)^{p(y) / p_{-}} \sigma(y) d \mu\right)^{\frac{{{q_ - }({Q_j^k})}}{{{p_ - }({Q_j^k})}}{p_ - }} \int_{E_j^k} \sigma\left(Q_j^k\right)^{q_{-}\left(Q_j^k\right)} \mu\left(Q_j^k\right)^{(\eta-1)q(x)} {\omega}(x)^{q(x)} d \mu.
\end{equation}
We claim that
\begin{equation}\label{Suff.ineq_55}
\int_{E_j^k} \sigma\left(Q_j^k\right)^{q_{-}\left(Q_j^k\right)} \mu\left(Q_j^k\right)^{(\eta-1)q(x)} {\omega}(x)^{q(x)} d \mu \lesssim \sigma\left(Q_j^k\right)^{\frac{{{q_ - }({Q_j^k})}}{{{p_ - }({Q_j^k})}}}.
\end{equation}

Since $\mu\left(Q_j^k\right) \approx \mu\left(E_j^k\right)$ and $\sigma \in A_{\infty}$, by Lemma \ref{Apq_Ainfty} applied to $\omega^{-1}\in A_{q'(\cdot),p'(\cdot)}$, Lemma \ref{CZD}, and Lemma \ref{Ainfty}, we obtain $\sigma\left(Q_j^k\right) \approx \sigma\left(E_j^k\right)$. Thus, \eqref{Suff.ineq_55} can deduce that \eqref{Suff.ineq_4} is bounded by
\begin{align*} 
&\sum_{k, j}\left(\avgint_{Q_j^k}\left(f_1(y) \sigma(y)^{-1}\right)^{p(y) / p_{-}}  \sigma(y) d\mu\right)^{\frac{{{q_ - }(Q_j^k)}}{{{p_ - }(Q_j^k)}}{p_ - }} \sigma\left(E_j^k\right)^{\frac{{{q_ - }(Q_j^k)}}{{{p_ - }(Q_j^k)}}{p_ - }}\\
\lesssim& \sum_{k,j}\left(\int_{E^k_j} M^\mathcal{D}_\sigma((f_1\sigma^{-1})^{\pp/p_-})(x)^{p_-}\sigma(x)\,d\mu\right)^{\frac{{{q_ - }(Q_j^k)}}{{{p_ - }(Q_j^k)}}} \\
\lesssim&\sum\limits_{\theta  = 1,\frac{{{q_ + }}}{{{p_ - }}}} {\sum\limits_{k,j} {{{\left( {\int_{E_j^k} {M_\sigma ^{\mathcal D}} ({{({f_1}{\sigma ^{ - 1}})}^{p( \cdot )/{p_ - }}}){{(x)}^{{p_ - }}}\sigma (x){\mkern 1mu} d\mu } \right)}^\theta }} }\\
\le&\sum\limits_{\theta  = 1,\frac{{{q_ + }}}{{{p_ - }}}} {{{\left( {\sum\limits_{k,j} {\int_{E_j^k} {M_\sigma ^{\mathcal D}} ({{({f_1}{\sigma ^{ - 1}})}^{p( \cdot )/{p_ - }}}){{(x)}^{{p_ - }}}\sigma (x){\mkern 1mu} d\mu } } \right)}^\theta }}\\
\le&\sum\limits_{\theta  = 1,\frac{{{q_ + }}}{{{p_ - }}}} {{{\left( {\int_X {M_\sigma ^{\mathcal D}} ({{({f_1}{\sigma ^{ - 1}})}^{p( \cdot )/{p_ - }}}){{(x)}^{{p_ - }}}\sigma (x){\mkern 1mu} d\mu } \right)}^\theta }}.
\end{align*}
By Lemma \ref{M.eta.bound} and \eqref{Suff.ineq_1}, the above is bounded by
$\sum\limits_{\theta  = 1,\frac{{{q_ + }}}{{{p_ - }}}} {{{\left( {\int_X {{{(\omega (x){f_1}(x))}^{p(x)}}} d\mu } \right)}^\theta }} \lesssim 1.$

Next, we will verify \eqref{Suff.ineq_55} to finish the estimate for $f_1$.
In fact, the left-hand side of \eqref{Suff.ineq_55} can be rewrite by
\begin{equation}\label{Suff.ineq_6}
	\left(\frac{\sigma\left(Q_j^k\right)}{\left\|\omega^{-1} \chi_{Q_j^k}\right\|_{p^{\prime}(\cdot)}}\right)^{q_{-}\left(Q_j^k\right)} \int_{Q_j^k}\left\|\omega^{-1} \chi_{Q_j^k}\right\|_{p^{\prime}(\cdot)}^{q_{-}\left(Q_j^k\right)-q(x)}\left\|\omega^{-1} \chi_{Q_j^k}\right\|_{p^{\prime}(\cdot)}^{q(x)} \mu\left(Q_j^k\right)^{(\eta-1)q(x)} {\omega}(x)^{q(x)} d \mu.
\end{equation}
To prove \eqref{Suff.ineq_55}, it suffices to prove that
\begin{align}
&\int_{Q_j^k}\left\|\omega^{-1} \chi_{Q_j^k}\right\|_{p^{\prime}(\cdot)}^{q(x)} \mu\left(Q_j^k\right)^{(\eta-1)q(x)} {\omega}(x)^{q(x)} d \mu \lesssim 1,\label{6.10}\\
&\left\|\omega^{-1} \chi_{Q_j^k}\right\|_{p^{\prime}(\cdot)}^{q_{-}\left(Q_j^k\right)-q(x)} \lesssim 1 \label{Suff.ineq_7},\\
&\left(\frac{\sigma\left(Q_j^k\right)}{\left\|\omega^{-1} \chi_{Q_j^k}\right\|_{p^{\prime}(\cdot)}}\right)^{q_{-}\left(Q_j^k\right)} \lesssim \sigma\left(Q_j^k\right)^{\frac{{{q_ - }(Q_j^k)}}{{{p_ - }(Q_j^k)}}} \label{Suff.ineq_8}.
\end{align}

Firstly, \eqref{6.10} follows instantly from the condition of $A_{p(\cdot),q(\cdot)}$ and Lemma \ref{p.omega}.
Secondly, we will prove \eqref{Suff.ineq_7} as follows. 

Assume that $\left\|\omega^{-1} \chi_{Q_j^k}\right\|_{p^{\prime}(\cdot)} <1$, otherwise, there is nothing to prove.
Then,
\begin{align}\label{equiv.1}
p(x) - {p_ - }(Q_j^k) \approx \frac{1}{{{p_ - }(Q_j^k)}} - \frac{1}{{p(x)}} = \frac{1}{{{q_ - }(Q_j^k)}} - \frac{1}{{q(x)}} \approx q(x) - {q_ - }(Q_j^k),
\end{align}

which only depend on $p(\cdot)$ and $\eta$. 
Moreover, we have
\begin{align*}
	q(x)-q_{-}\left(Q_j^k\right) & =\frac{q^{\prime}(x)}{q^{\prime}(x)-1}-\frac{\left(q^{\prime}\right)_{+}\left(Q_j^k\right)}{\left(q^{\prime}\right)_{+}\left(Q_j^k\right)-1} \\
	& =\frac{\left(q^{\prime}\right)_{+}\left(Q_j^k\right)-q^{\prime}(x)}{\left[q^{\prime}(x)-1\right]\left[\left(q^{\prime}\right)_{+}\left(Q_j^k\right)-1\right]} \\
	& \lesssim {\left(q^{\prime}\right)_{+}\left(Q_j^k\right)-\left(q^{\prime}\right)_{-}\left(Q_j^k\right)} \\
	& \approx {\left(p^{\prime}\right)_{+}\left(Q_j^k\right)-\left(p^{\prime}\right)_{-}\left(Q_j^k\right)},
\end{align*}
where the last step holds since we used \eqref{equiv.1} and the implicit constants only depend on $p(\cdot)$ and $\eta$.
Thus, \eqref{Suff.ineq_7} follows immediately from Lemma \ref{fracexp} (applied to cubes) and \eqref{cha.2}.

Last, we prove \eqref{Suff.ineq_8} as follows.
If $\left\|\omega^{-1} \chi_{Q_j^k}\right\|_{p^{\prime}(\cdot)}>1$, then by Lemma \ref{p.omega},
$$
\left(\frac{\sigma\left(Q_j^k\right)}{\left\|\omega^{-1} \chi_{Q_j^k}\right\|_{p^{\prime}(\cdot)}}\right)^{q_{-}\left(Q_j^k\right)} \leq\left(\sigma\left(Q_j^k\right)^{1-1 /\left(p^{\prime}\right)_+\left(Q_j^k\right)}\right)^{q_{-}\left(Q_j^k\right)}=\sigma\left(Q_j^k\right)^{{\frac{{{q_ - }(Q_j^k)}}{{{p_ - }(Q_j^k)}}}} .
$$
If $\left\|\omega^{-1} \chi_{Q_j^k}\right\|_{p^{\prime}(\cdot)} \leq 1$, then applying Lemma \ref{p.omega}  and Lemma \ref{fracexp},
\begin{align*}
	\left(\frac{\sigma\left(Q_j^k\right)}{\left\|\omega^{-1} \chi_{Q_j^k}\right\|_{p^{\prime}(\cdot)}}\right)^{q_{-}\left(Q_j^k\right)} & \leq\left(\left\|\omega^{-1} \chi_{Q_j^k}\right\|_{p^{\prime}(\cdot)}^{\left(p^{\prime}\right)_-\left(Q_j^k\right)-1}\right)^{q_{-}\left(Q_j^k\right)} \\
	& \leq\left(\left\|\omega^{-1} \chi_{Q_j^k}\right\|_{p^{\prime}(\cdot)}^{\left(p^{\prime}\right)_-\left(Q_j^k\right)-1+\left(p^{\prime}\right)_+\left(Q_j^k\right)-\left(p^{\prime}\right)_+\left(Q_j^k\right)}\right)^{q_{-}\left(Q_j^k\right)} \\
	& \lesssim \left(\left\|\omega^{-1} \chi_{Q_j^k}\right\|_{p^{\prime}(\cdot)}^{\left(p^{\prime}\right)_{+}\left(Q_j^k\right)-1}\right)^{q_{-}\left(Q_j^k\right)} \\
	& \lesssim \left(\sigma\left(Q_j^k\right)^{\frac{\left(p^{\prime}\right)_{+}\left(Q_j^k\right)-1}{\left(p^{\prime}\right)_{+}\left(Q_j^k\right)}}\right)^{q_{-}\left(Q_j^k\right)} \\
	& \lesssim \sigma\left(Q_j^k\right)^{{\frac{{{q_ - }(Q_j^k)}}{{{p_ - }(Q_j^k)}}}} .
\end{align*}
Eventually, \eqref{Suff.ineq_55} is valid and then we finish the proof of \eqref{Suff.ineq_0} for $f_1$.

{\bf{Estimate for $f_2$:}}
Initially, we notice that $1, \sigma$, and $W$ are in $A_{\infty}$. Considering $\{Q_j^k\}$ as the Calderón-Zygmund dyadic cubes for $f_2$ relative to $\mu$, and selecting a nested tower of cubes $\{Q_{k, 0}\}$, it is observed that the measures $\mu(Q_{k, 0}), \sigma(Q_{k, 0})$, and $W(Q_{k, 0})$ all tend towards infinity. We will often use the doubling property for $A_\infty$ in following.

Finding a cube $Q_{k_0, 0} =: Q_0 \in \mathcal{D}_{k_0}$ s.t. $\mu\left(Q_0\right), W\left(Q_0\right) $ and $\sigma\left(Q_0\right)\geq 1$ and fixing a $LH_{\infty}$ base point $x_0=x_c\left(Q_0\right)$, by Lemma \ref{LHxy}. Define $N_0=2 A_0 C_d$ and the sets
\begin{align*}
\mathscr{F} & =\left\{(k, j)\in \mathbb{Z}\times \mathbb{Z}: Q_j^k \subseteq Q_0\right\}; \\
\mathscr{G} & =\left\{(k, j)\in \mathbb{Z}\times \mathbb{Z}: Q_j^k \nsubseteq Q_0 \text { and } d\left(x_0, x_c\left(Q_j^k\right)\right)<N_0 d_0^k\right\}; \\
\mathscr{H} & =\left\{(k, j)\in \mathbb{Z}\times \mathbb{Z}: Q_j^k \nsubseteq Q_0 \text { and } d\left(x_0, x_c\left(Q_j^k\right)\right) \geq N_0 d_0^k\right\}.
\end{align*}
By the same argument of getting \eqref{Suff.ineq_3}, and replacing $f_1$ with $f_2$, we have
\begin{align}
\int_X M_{\eta}^{\mathcal{D}} f_2(x)^{q(x)} \omega(x)^{q(x)} d \mu & \lesssim  \sum_{k, j} \int_{E_j^k}\left(\avgint_{Q_j^k} f_2(y) \sigma(y) \sigma(y)^{-1} d \mu\right)^{q(x)} {(\mu\left(Q_j^k\right)^\eta \omega(x))}^{q(x)} d \mu \notag.
\end{align}
We decompose $\sum_{k,j}$ into $\sum_{(k, j) \in \mathscr{F}}=I_1$, $\sum_{(k, j) \in \mathscr{G}}=I_2$ and $\sum_{(k, j) \in \mathscr{H}}=I_3$. 

{\bf{Estimate for $I_1$:}}
Noting that the fact that $f_2 \sigma^{-1} \leq 1$ allows us to remove $f_2$ from consideration. Subsequently, by applying \eqref{Suff.ineq_55}, we obtain
\begin{align*}
I_1 & \leq \sum_{(k, j) \in \mathscr{F}} \int_{E_j^k}\left(\avgint_{Q_j^k} \sigma(y) d \mu\right)^{q(x)} {(\mu\left(Q_j^k\right)^\eta \omega(x))}^{q(x)} d \mu \\
& \leq \sum_{(k, j) \in \mathscr{F}} \int_{E_j^k} \sigma\left(Q_j^k\right)^{q(x)-q_{-}\left(Q_j^k\right)} \sigma\left(Q_j^k\right)^{q_{-}\left(Q_j^k\right)} \mu {\left( {Q_j^k} \right)^{(\eta  - 1)q(x)}}\omega {(x)^{q(x)}} d \mu \\
& \leq \sum_{(k, j) \in \mathscr{F}}\left(1+\sigma\left(Q_j^k\right)\right)^{q_{+}\left(Q_j^k\right)-q_{-}\left(Q_j^k\right)} \int_{E_j^k} \sigma\left(Q_j^k\right)^{q_{-}\left(Q_j^k\right)} \mu {\left( {Q_j^k} \right)^{(\eta  - 1)q(x)}}\omega {(x)^{q(x)}} d \mu \\
& \lesssim \left(1+\sigma\left(Q_0\right)\right)^{q_{+}-q_{-}} \sum_{(k, j) \in \mathscr{F}} \sigma\left(Q_j^k\right)^{\frac{{{q_ - }(Q_j^k)}}{{{p_ - }(Q_j^k)}}} \\
& \lesssim \left(1+\sigma\left(Q_0\right)\right)^{q_{+}-q_{-}} \sum\limits_{\theta  = 1,\frac{{{q_ + }}}{{{p_ - }}}} {\sum\limits_{(k,j) \in {\mathscr{F}}} {\left( {\sigma {{\left( {Q_j^k} \right)}^\theta }} \right)} } \\
& \lesssim \left(1+\sigma\left(Q_0\right)\right)^{q_{+}-q_{-}} \sum\limits_{\theta  = 1,\frac{{{q_ + }}}{{{p_ - }}}} {{{\left( {\sum\limits_{(k,j) \in \mathscr{F}} {\sigma \left( {E_j^k} \right)} } \right)}^\theta }} \\
&\le
\left(1+\sigma\left(Q_0\right)\right)^{q_{+}-q_{-}}\sum\limits_{\theta  = 1,\frac{{{q_ + }}}{{{p_ - }}}} {\sigma {{({Q_0})}^\theta }}.
\end{align*}
where the implicit constants are independent on $Q_j^k$ and $f$.

{\bf{Estimate for $I_2$:}}
Set $B_j^k= B(x_c(Q_j^k), A_0(C_d+1) N_0 d_0^k)$. For $(k,j) \in \mathscr{G}$, as $Q^k_j \not\subseteq Q_0$, if $x_c(Q^k_j) \in Q_0$, then by Lemma \ref{cubes},
$Q_0 \subseteq Q^k_j \subseteq B^k_j.$
If $x_c(Q^k_j) \not\in Q_0$, noting that $Q_0 \supseteq B(x_0,d_0^{k_0})$, we have
\[ d_0^{k_0} \leq d(x_0,x_c(Q^k_j)) \leq N_0d_0^k. \]
By Lemma \ref{cubes} again, since $x_0 \in B(x_c(Q^k_j),N_0d_0^k)$ and $Q_0 \subseteq B(x_0,C_dd_0^{k_0})$, then for every $x \in Q_0,$
\[ d(x,x_c(Q^k_j)) \leq A_0(d(x,x_0)+d(x_0,x_c(Q^k_j))) \leq A_0(C_dd_0^{k_0}+N_0d_0^k) \leq A_0(C_d+1)N_0d_0^k. \]
Hence, for any $(k,j) \in \mathscr{G}$, $Q_0 \subseteq B^k_j$. Furthermore, $W(B^k_j), \sigma(B^k_j) \geq 1$. Note also that by doubling property and Lemma \ref{cubes}, $\mu(Q^k_j) \approx \mu(B^k_j)$. 

Lemma \ref{p.omega} can deduce that ${\left\| {{\omega ^{{\rm{ - }}1}}{\chi _{{Q_0}}}} \right\|_{p'( \cdot )}} \ge 1$, since $\sigma \left( {{Q_0}} \right) \ge 1$. By \eqref{Apq.Ainfty_1}, \eqref{cha.2}, and Lemma \ref{chara.Apq}, it follows that
\begin{align*}
\mu\left(Q_j^k\right)^{-1} \approx \mu\left(B_j^k\right)^{-1} & \lesssim  \mu {\left( {{Q_0}} \right)^{ - 1}}{\left( {\frac{{\sigma \left( {{Q_0}} \right)}}{{\sigma \left( {B_j^k} \right)}}} \right)^{\frac{1}{{(1 - \eta )p_\infty ^\prime }}}}
& \approx\left\|\omega^{-1} \chi_{B_j^k}\right\|_{p^{\prime}(\cdot)}^{{\frac{1}{{(\eta-1   )}}}} \lesssim \left\|\omega^{-1} \chi_{Q_j^k}\right\|_{p^{\prime}(\cdot)}^{{\frac{1}{{(\eta-1 )}}}} .
\end{align*}
Together with the above and Lemma \ref{Holder}, we have
$$
\mu\left(Q_j^k\right)^{\eta-1} \int_{Q_j^k} f_2(y) d \mu \lesssim \left\|\omega^{-1} \chi_{Q_j^k}\right\|_{p^{\prime}(\cdot)}^{-1}\left\|f_2 \omega\right\|_{p(\cdot)}\left\|\omega^{-1} \chi_{Q_j^k}\right\|_{p^{\prime}(\cdot)} \lesssim 1.
$$
It follows immediately from Lemma \ref{2Log_2} that
\begin{align}
I_2 & \lesssim \sum_{(k, j) \in \mathscr{G}} \int_{E_j^k}\left(C^{-1} \mu\left(Q_j^k\right)^{\eta-1} \int_{Q_j^k} f_2(y) d \mu\right)^{q(x)} {\omega}(x)^{q(x)} d \mu \notag \\ \label{Suff.ineq_9}
& \leq C_t \sum_{(k, j) \in \mathscr{G}} \int_{E_j^k}\left(\mu\left(Q_j^k\right)^{\eta-1} \int_{Q_j^k} f_2(y) d \mu\right)^{q_{\infty}} {\omega}(x)^{q(x)} d \mu+\sum_{(k, j) \in \mathscr{G}} \int_{E_j^k} \frac{W(x)}{\left(e+d\left(x_0, x\right)\right)^{t q_{-}}} d \mu .
\end{align}
Similar to getting \eqref{Apq.Ainfty_3}, we can choose t sufficiently large to obtain
\begin{equation}\label{Suff.ineq_10}
    \sum_{(k, j) \in \mathscr{G}} \int_{E_j^k} \frac{W(x)}{\left(e+d\left(x_0, x\right)\right)^{t q_{-}}} d \mu \leq \int_X \frac{W(x)}{\left(e+d\left(x_0, x\right)\right)^{t q_{-}}} d \mu \leq 1.
\end{equation}
To finish the estimation of $I_2$, it suffices to estimate the first term of \eqref{Suff.ineq_9}. Therefore, we have
\begin{align*}
& \sum_{(k, j) \in \mathscr{G}} \int_{E_j^k}\left(\mu\left(Q_j^k\right)^{\eta-1} \int_{Q_j^k} f_2(y) d \mu\right)^{q_{\infty}} {\omega}(x)^{q(x)} d \mu \\
= & \sum_{(k, j) \in \mathscr{G}}\left(\frac{1}{\sigma\left(Q_j^k\right)} \int_{Q_j^k} f_2(y) \sigma(y)^{-1} \sigma(y) d \mu\right)^{q_{\infty}}\left(\frac{\sigma\left(Q_j^k\right)}{\mu\left(Q_j^k\right)^{1-\eta}}\right)^{q_{\infty}} W\left(E_j^k\right) .
\end{align*}
Next, we claim that 
\begin{align}\label{Suff.ineq_11}
{\left( {\frac{{\sigma (Q_j^k)}}{{\mu {{(Q_j^k)}^{1 - \eta }}}}} \right)^{{q_\infty }}} \lesssim \frac{{\sigma (Q_j^k)}}{{W(Q_j^k)}}.
\end{align}

Indeed, applying \eqref{Apq.Ainfty_4} to ($\sigma$, ${p'( \cdot )}$) and ($W$, ${q( \cdot )}$) for cubes, and by $A_{p(\cdot), q(\cdot)}$ condition, it follows that
\begin{align*}
\sigma {(Q_j^k)^{{q_\infty } - 1}} \lesssim \left\| {{\omega ^{ - 1}}{\chi _{Q_j^k}}} \right\|_{p'( \cdot )}^{{q_\infty }} \lesssim {\left( {\frac{{\mu {{(Q_j^k)}^{1 - \eta }}}}{{{{\left\| {\omega {\chi _{Q_j^k}}} \right\|}_{q( \cdot )}}}}} \right)^{{q_\infty }}} \lesssim \frac{{\mu {{(Q_j^k)}^{(1 - \eta ){q_\infty }}}}}{{W(Q_j^k)}}.
\end{align*}
Thus, \eqref{Suff.ineq_11} follows obviously from the rearrangement.

In the following, we proceed to estimate the first term of \eqref{Suff.ineq_9}.

\begin{align} 
&\sum_{(k, j) \in \mathscr{G}}\left(\frac{1}{\sigma\left(Q_j^k\right)} \int_{Q_j^k} f_2(y) \sigma(y)^{-1} \sigma(y) d \mu\right)^{q_{\infty}}\left(\frac{\sigma\left(Q_j^k\right)}{\mu\left(Q_j^k\right)^{1-\eta}}\right)^{q_{\infty}} W\left(E_j^k\right) 
\notag\\
 \lesssim&\sum_{(k, j) \in \mathscr{G}}\left(\frac{1}{\sigma\left(Q_j^k\right)} \int_{Q_j^k} f_2(y) \sigma(y)^{-1} \sigma(y) d \mu\right)^{q_{\infty}} \sigma\left(Q_j^k\right) \notag\\ 
\lesssim&  \sum_{(k, j) \in \mathscr{G}} \int_{E_j^k} M_{\sigma}\left(f_2 \sigma^{-1}\right)(x)^{q_{\infty}} \sigma(x) d \mu \notag \\ 
 \le& \int_X M_{ \sigma}\left(f_2 \sigma^{-1}\right)(x)^{q_{\infty}} \sigma(x) d \mu \label{Suff.ineq_19}\\ 
\lesssim &  \int_X\left(f_2(x) \sigma(x)^{-1}\right)^{q_{\infty}} \sigma(x) d \mu \label{Suff.ineq_14}\\
\le &  \int_X\left(f_2(x) \sigma(x)^{-1}\right)^{p_{\infty}} \sigma(x) d \mu \label{Suff.ineq_13}\\
 \le & C_t \left(\int_X\left(f_2(x) \sigma(x)^{-1}\right)^{p(x)} \sigma(x) d \mu+\int_X \frac{\sigma(x)}{\left(e+d\left(x_0, x\right)\right)^{t p_{-}}} d \mu\right) \label{Suff.ineq_15}\\
=& C_t \left(\int_X f_2(x)^{p(x)} \omega(x)^{p(x)} d \mu+\int_X \frac{\sigma(x)}{\left(e+d\left(x_0, x\right)\right)^{t p_{-}}} d \mu\right).\label{Suff.ineq_17}
\end{align}
where \eqref{Suff.ineq_14} comes from Lemma \ref{M.eta.bound}, \eqref{Suff.ineq_13} holds due to the fact that $f_2\sigma^{-1} \le 1$, and \eqref{Suff.ineq_15} is valid due to Lemma \ref{2Log_2}. Then, the second term of \eqref{Suff.ineq_17} is similar to \eqref{Suff.ineq_10} and we just replace $W$ with $\sigma$. Thus, \( I_2 \) is bounded by a constant due to \eqref{Suff.ineq_0}.

{\bf{Estimate for $I_3$:}} Firstly, we claim that
\begin{equation}\label{Suff.ineq_16}
    \sup _{x \in Q_j^k} d\left(x_0, x\right) \approx \inf _{x \in Q_j^k} d\left(x_0, x\right),
\end{equation}
where the implicit constant is independent on $Q_j^k$. In our analysis, the validity of inequality \eqref{Suff.ineq_16} will be established through substitution of \( Q_j^k \) with the ball \( A_j^k = N_0^{-1}B_j^k \), which encompasses \( Q_j^k \). For this purpose, we fix a pair \( (k, j) \) within \( \mathscr{H} \) and choose an arbitrary \( x \) from \( A_j^k \). We get that
\begin{align*} d(x,x_0) &\leq A_0[d(x,x_c(Q^k_j))+d(x_0,x_c(Q^k_j))]\leq A_0[C_dd_0^k+d(x_0,x_c(Q^k_j))] \leq \left(A_0+\frac{1}{2}\right)d(x_0,x_c(Q^k_j)). 
\end{align*}
In the other hand,
\begin{align*} d(x_0,x_c(Q^k_j)) &\leq A_0[d(x_0,x)+d(x,x_c(Q^k_j))] = \frac{1}{2}N_0d_0^k + A_0d(x_0,x) \leq \frac{1}{2}d(x_0,x_c(Q^k_j)) + A_0d(x_0,x).
\end{align*}
Then, we obtain that
$$
d\left(x_0, x_c\left(Q_j^k\right)\right) \leq 2 A_0 d\left(x_0, x\right) .
$$
Consequently, \eqref{Suff.ineq_16} holds. 

To proceed with the estimation of \( I_3 \), it becomes necessary to partition \( \mathscr{H} \) into two distinct subsets,
$$
\mathscr{H}_1=\left\{(k, j)\in \mathscr{H}: \sigma\left(Q_j^k\right) \leq 1\right\}, \quad \mathscr{H}_2=\left\{(k, j)\in \mathscr{H}: \sigma\left(Q_j^k\right)>1\right\} .
$$
Initially, we aggregate over \( \mathscr{H}_1 \). Consider \( x_{+} \) within \( \overline{A_j^k} \), chosen such that \( q_{+}(A_j^k) = q(x_{+}) \), a selection made possible by the continuity of \( q(\cdot) \) in \( LH_0 \). Subsequently, in accordance with the \( LH_{\infty} \) criterion and inequality \eqref{Suff.ineq_16}, it holds for almost every \( x \) in \( Q_j^k \) that,
\begin{align*}
0\leq q_{+}\left(Q_j^k\right)-q(x) & \leq\left|q\left(x_{+}\right)-q_{\infty}\right|+\left|q(x)-q_{\infty}\right| \\
& \leq \frac{C_{\infty}}{\log \left(e+d\left(x_0, x_{+}\right)\right)}+\frac{C_{\infty}}{\log \left(e+d\left(x_0, x\right)\right)} \\
& \approx\frac{1}{\log \left(e+d\left(x_0, x\right)\right)}.
\end{align*}

By Lemma \ref{2Log_2} and \eqref{Suff.ineq_10}, we derive
\begin{align}
 &\sum_{(k, j) \in \mathscr{H}_1} \int_{E_j^k}\left(\mu\left(Q_j^k\right)^{\eta-1} \int_{Q_j^k} f_2(y) d \mu\right)^{q(x)} {\omega}(x)^{q(x)} d \mu \notag\\
 \lesssim& \left(\sum_{(k, j) \in \mathscr{H}_1} \int_{E_j^k}\left(\mu\left(Q_j^k\right)^{\eta-1} \int_{Q_j^k} f_2(y) d \mu\right)^{q_{+}\left(Q_j^k\right)} {\omega}(x)^{q(x)} d \mu\right) + 1.
 \label{Suff.ineq_18}
\end{align}
It follows from Lemma \ref{2Log_3} that
$$
\mu\left(Q_j^k\right)^{q(x)-q_{+}\left(Q_j^k\right)} \lesssim\left(\mu\left(Q_j^k\right)^{q_{+}\left(Q_j^k\right)}+\mu\left(Q_j^k\right)^{q_{-}\left(Q_j^k\right)}\right) \mu\left(Q_j^k\right)^{-q_{+}\left(Q_j^k\right)}\lesssim 1.
$$
The first term of \eqref{Suff.ineq_18} is bounded by
$$
\sum_{(k, j) \in \mathscr{H}_1} \int_{E_j^k}\left(\frac{1}{\sigma\left(Q_j^k\right)} \int_{Q_j^k} f_2(y) \sigma(y)^{-1} \sigma(y) d \mu\right)^{q_{+}\left(Q_j^k\right)} \sigma\left(Q_j^k\right)^{q_+\left(Q_j^k\right)} \mu\left(Q_j^k\right)^{(\eta-1) q(x)} {\omega}(x)^{q(x)} d \mu.
$$

Through Lemma \ref{2Log_2} and the fact that $f_2 \sigma^{-1} \leq 1$, the above
\begin{align*}
& \lesssim \sum_{(k, j) \in \mathscr{H}_1} \int_{E_j^k}\left(\frac{1}{\sigma\left(Q_j^k\right)} \int_{Q_j^k} f_2(y) \sigma(y)^{-1} \sigma(y) d \mu\right)^{q_{\infty}} \sigma\left(Q_j^k\right)^{q_{+}\left(Q_j^k\right)} \mu\left(Q_j^k\right)^{(\eta-1) q(x)} {\omega}(x)^{q(x)} d \mu \\
& + \sum_{(k, j) \in \mathscr{H}_1} \int_{E_j^k} \sigma\left(Q_j^k\right)^{q_{+}\left(Q_j^k\right)} \mu\left(Q_j^k\right)^{(\eta-1) q(x)} \frac{{\omega}(x)^{q(x)}}{\left(e+d\left(x_0, x\right)\right)^{t q_{-}}} d \mu\\ 
&=:J_1+J_2.
\end{align*}
To estimate \( J_2 \), we note that $\sigma (E_j^k) \approx \sigma(Q_j^k) \leq 1$. By \eqref{Suff.ineq_55} and \eqref{Suff.ineq_16}, we deduce that
\begin{align*}
J_2 & \leq \sum_{(k, j) \in \mathscr{H}_1} \sup _{x \in E_j^k}\left(e+d\left(x_0, x\right)\right)^{-t q_{-}} \int_{E_j^k} \sigma\left(Q_j^k\right)^{q_{-}\left(Q_j^k\right)} \mu\left(Q_j^k\right)^{(\eta-1) q(x)} {\omega}(x)^{q(x)} d \mu \\
& \lesssim  \sum_{(k, j) \in \mathscr{H}_1} \sup _{x \in E_j^k}\left(e+d\left(x_0, x\right)\right)^{-t q_{-}} \sigma\left(E_j^k\right) \\
& \lesssim  \sum_{(k, j) \in \mathscr{H}_1} \int_{E_j^k} \frac{\sigma(x)}{\left(e+d\left(x_0, x\right)\right)^{t q_{-}}} d \mu \\
& \le\int_X \frac{\sigma(x)}{\left(e+d\left(x_0, x\right)\right)^{t q_{-}}} d \mu\\
& \lesssim 1.
\end{align*}
where the last inequality is the same as the argument for estimating the second term in \eqref{Suff.ineq_17}.
Similarly, it follows obviously from \eqref{Suff.ineq_4} and \eqref{Suff.ineq_55} that
\begin{align*}
J_1 &\lesssim \sum_{(k, j) \in \mathscr{H}_1}\left(\sigma\left(Q_j^k\right)^{-1} \int_{Q_j^k} f_2(y) \sigma(y)^{-1} \sigma(y) d \mu\right)^{q_{\infty}} \sigma\left(Q_j^k\right)^{\frac{{{q_ - }(Q_j^k)}}{{{p_ - }(Q_j^k)}}}\\ 
& \lesssim \sum_{(k, j) \in \mathscr{H}_1}\left(\sigma\left(Q_j^k\right)^{-1} \int_{Q_j^k} f_2(y) \sigma(y)^{-1} \sigma(y) d \mu\right)^{q_{\infty}} \sigma\left(E_j^k\right) \\
& \lesssim \int_X M_{\sigma}\left(f_2 \sigma^{-1}\right)(x)^{q_{\infty}} \sigma(x) d \mu.
\end{align*}
where the last estimate similar to \eqref{Suff.ineq_19}, which is bounded by a constant.
We finish the estimate for \( \mathscr{H}_1 \). 

Finally, for the case of \( \mathscr{H}_2 \), by Lemma \ref{Holder}, we have
$$
\int_{Q_j^k} f_2(y) d \mu \lesssim \left\|f_2 \omega\right\|_{p(\cdot)}\left\|\omega^{-1} \chi_{Q_j^k}\right\|_{p^{\prime}(\cdot)} \le \left\|\omega^{-1} \chi_{Q_j^k}\right\|_{p^{\prime}(\cdot)} .
$$

Applying Lemma \ref{2Log_2}, 
\begin{align*}
& \sum_{(k, j) \in \mathscr{H}_2} \int_{E_j^k}\left(\avgint_{Q_j^k} f_2(y) d \mu\right)^{q(x)} (\mu {(Q_j^k)^\eta }{\omega}(x))^{q(x)} d \mu \\
 \lesssim& \sum_{(k, j) \in \mathscr{H}_2}\int_{E_j^k} {{{\left( {c\left\| {{\omega ^{ - 1}}{\chi _{Q_j^k}}} \right\|_{{p^\prime }( \cdot )}^{ - 1}\int_{Q_j^k} {{f_2}} (y)d\mu } \right)}^{q(x)}}} \left\| {{\omega ^{ - 1}}{\chi _{Q_j^k}}} \right\|_{{p^\prime }( \cdot )}^{q(x)}\mu {(Q_j^k)^{(\eta  - 1)q(x)}}\omega {(x)^{q(x)}}d\mu \\
 \lesssim& \sum_{(k, j) \in \mathscr{H}_2} \int_{E_j^k} {{{\left( {\left\| {\omega^{-1} {\chi _{Q_j^k}}} \right\|_{{p^\prime }( \cdot )}^{ - 1}\int_{Q_j^k} {{f_2}} (y)d\mu } \right)}^{{q_\infty }}}} \left\| {{\omega ^{ - 1}}{\chi _{Q_j^k}}} \right\|_{{p^\prime }( \cdot )}^{q(x)}\mu {(Q_j^k)^{(\eta  - 1)q(x)}}\omega {(x)^{q(x)}}d\mu \\
&+\sum_{(k,j) \in \mathscr{H}_2} \int_{E_j^k} {\frac{{\left\| {{\omega ^{ - 1}}{\chi _{Q_j^k}}} \right\|_{{p^\prime }( \cdot )}^{q(x)}\mu {{(Q_j^k)}^{(\eta  - 1)q(x)}}\omega {{(x)}^{q(x)}}}}{{{{(e + d({x_0},x))}^{t{q_ - }}}}}{\mkern 1mu} d\mu } \\ 
=:& K_1 + K_2.
\end{align*}
To estimate $K_2$, note that $1 \leq \sigma\left(Q_j^k\right) \approx \sigma\left(E_j^k\right)$. By \eqref{6.10} and \eqref{Suff.ineq_16}, it follows from that
\begin{align}\label{K2}
K_2 & \lesssim  \sum_{(k, j) \in \mathscr{H}_2} \sup _{x \in E_j^k}\left(e+d\left(x_0, x\right)\right)^{-t q_{-}} \int_{E_j^k} {\left\| {{\omega ^{ - 1}}{\chi _{Q_j^k}}} \right\|_{{p^\prime }( \cdot )}^{q(x)}\mu {{(Q_j^k)}^{(\eta  - 1)q(x)}}\omega {{(x)}^{q(x)}}d\mu } \notag\\
& \lesssim \sum_{(k, j) \in \mathscr{H}_2} \sup _{x \in E_j^k}\left(e+d\left(x_0, x\right)\right)^{-t q_{-}} \notag\\
& \lesssim \sum_{(k, j) \in \mathscr{H}_2} \sup _{x \in E_j^k}\left(e+d\left(x_0, x\right)\right)^{-t q_{-}} \sigma\left(E_j^k\right) \notag\\
& \lesssim \int_X \frac{\sigma(x)}{\left(e+d\left(x_0, x\right)\right)^{t q_{-}}} d \mu.
\end{align}
Actually, (\ref{K2}) has been argued in $J_2$ and $I_2$ which is bounded by a constant.

To estimate $K_1$, it follows from \eqref{Apq.Ainfty_4} to get
$$
\left\|w^{-1} \chi_{Q_j^k}\right\|_{p^{\prime}(\cdot)}^{-q_{\infty}} \sigma(Q_j^k) ^{q_{\infty}} \lesssim \sigma(Q_j^k)^{q_{\infty}-q_{\infty} / q_{\infty}^{\prime}}=\sigma(Q_j^k)^{\frac{{{q_\infty }}}{{{p_\infty }}}} .
$$
Since $1 \leq \sigma\left(Q_j^k\right) \approx \sigma\left(E_j^k\right)$, by \eqref{6.10}, we have
\begin{align*}
  K_1
  =&\sum_{(k, j) \in \mathscr{H}_2} \int_{E_j^k} {{{\left( {\sigma {{\left( {Q_j^k} \right)}^{ - 1}}\int_{Q_j^k} {{f_2}} (y)d\mu } \right)}^{{q_\infty }}}\left\| {{\omega ^{ - 1}}{\chi _{Q_j^k}}} \right\|_{{p^\prime }( \cdot )}^{q(x) - {q_\infty }}\sigma {{\left( {Q_j^k} \right)}^{{q_\infty }}}\mu {{(Q_j^k)}^{(\eta  - 1)q(x)}}\omega {{(x)}^{q(x)}}d\mu }\\
   \lesssim& \sum_{(k, j) \in \mathscr{H}_2}\left(\frac{1}{\sigma\left(Q_j^k\right)} \int_{Q_j^k} f_2(y) d \mu\right)^{q_{\infty}} \sigma\left(Q_j^k\right)^{\frac{{{q_\infty }}}{{{p_\infty }}}} \int_{Q_j^k} {\left\| {{\omega ^{ - 1}}{\chi _{Q_j^k}}} \right\|_{{p^\prime }( \cdot )}^{q(x)}} \mu {(Q_j^k)^{(\eta  - 1)q(x)}}\omega {(x)^{q(x)}}d\mu \\
\lesssim& {\left( {\sum\limits_{(k,j) \in {{\mathscr{H}}_2}} {{{\left( {\frac{1}{{\sigma \left( {Q_j^k} \right)}}\int_{Q_j^k} {{f_2}} (y)d\mu } \right)}^{{p_\infty }}}\sigma \left( {E_j^k} \right)} } \right)^{\frac{{{q_\infty }}}{{{p_\infty }}}}} \\
   \lesssim& {\left( {\int_X {{M_{\sigma }}} \left( {{f_2}{\sigma ^{ - 1}}} \right){{(x)}^{{p_\infty }}}\sigma (x)d\mu } \right)^{\frac{{{q_\infty }}}{{{p_\infty }}}}}.
\end{align*}
Next, we use the same method as for estimating \eqref{Suff.ineq_19} to make the above estimate bounded by a constant. Thus the estimates for $I_3$ are completed, which accomplishs the proof of sufficiency for $\mu (X) = \infty$.

{\bf{Case 2: $\mu(X) < \infty$.}} 

Last but not least, we turn to the case for $\mu(X) < \infty$. 
In the finite case, the proof is similar to before and we just need to make some changes for Calderón-Zygmund Decomposition. 
We also consider $f$ is a nonnegative funcion with \(\|\omega f \|_{p(\cdot)} = 1\) and decompose $f=f_1+f_2$ as before.
The construction of Calderón-Zygmund cubes at any height \(\lambda>\lambda_0 := \mu {\left( {Q_j^k} \right)^{\eta  - 1}}\int_{{Q^k}} {f_id\mu }\).  

By Lemma \ref{Holder}, Lemma \ref{finite-bounded}, and the condition of $A_{p(\cdot),q(\cdot)}$,
$$
{\lambda _0} \le 4\mu {(X)^{\eta  - 1}}{\left\| {{f_i}\omega } \right\|_{p( \cdot )}}{\left\| {{\omega ^{ - 1}}} \right\|_{{p^\prime }( \cdot )}} \le 4{\left[ \omega  \right]_{{A_{p( \cdot ),q( \cdot )}}}}\left\| \omega  \right\|_{q( \cdot )}^{ - 1}.
$$
In addition, by Lemma \ref{p.omega} and Lemma \ref{finite-bounded} again, we can conclude that $\lambda_0 \lesssim 1$.

From Lemma \ref{CZD}, set $a=2 C_{C Z}$ and $\left\{ {Q_j^k} \right\}$ is the Calderón-Zygmund cubes of $f_i$ at height $a^k$, for all integers $k \geq k_0=[\log _a \lambda_0]$. Then 
$$
X = X_{\eta ,{a^{{k_0}}}}^{\mathcal D}\bigcup {{{\left( {X_{\eta ,{a^{{k_0}}}}^{\mathcal D}} \right)}^c}}  = \left( {\bigcup\limits_{k = {k_0}}^\infty  {X_{\eta ,{a^k}}^{\mathcal D}\backslash X_{\eta ,{a^{k + 1}}}^{\mathcal D}} } \right)\bigcup {{{\left( {X_{\eta ,{a^{{k_0}}}}^{\mathcal D}} \right)}^c}},
$$
where $X_{\eta,a^{k_0}}^{\mathcal D}:=\left\{x \in X: M_\eta^{\mathcal{D}} f_i(x)>\lambda_0\right\} \subseteq \left\{ {Q_j^k} \right\}$.

It follows instantly from the getting of \eqref{Suff.ineq_3} that
\begin{align*}
&\int_X M_\eta^{\mathcal{D}} f_i(x)^{q(x)} \omega(x)^{q(x)} d \mu \\
 =&\int_{{{{\left( {X_{\eta ,{a^{{k_0}}}}^{\mathcal D}} \right)}^c}}} M_\eta^{\mathcal{D}} f_i(x)^{q(x)} \omega(x)^{q(x)} d \mu+\sum_{k=k_0}^{\infty} \int_{{X_{\eta ,{a^k}}^{\mathcal D}\backslash X_{\eta ,{a^{k + 1}}}^{\mathcal D}}} M_\eta^{\mathcal{D}} f_i(x)^{q(x)} \omega(x)^{q(x)} d \mu \\
 \lesssim & \lambda_0 W(X)+ \sum_{k \geq k_0, j} {\left( {\int_{Q_j^k} {{f_i}} (y)\sigma {{(y)}^{ - 1}}\sigma (y)d\mu } \right)^{q(x)}}\mu {\left( {Q_j^k} \right)^{(\eta  - 1)q(x)}}\omega {(x)^{q(x)}}d\mu.
\end{align*}
The first term is bounded by a constant, which depends only on $X$, $\mathcal D$, $\omega$, $\eta$, and $p(\cdot)$. When $i=1$, the second term is similar to the infinite case. We consider the following for $i=2$.

 After choosing $Q_0 = X$, then $I_2=I_3=0$. Further, since $f_2 \sigma^{-1} \leq 1$, $\sigma(Q_j^k) \approx \sigma(E_j^k)$, and \eqref{Suff.ineq_55}, the second term is bounded by
\begin{align*}
&\quad\sum_{k \geq k_0, j} \int_{E_j^k} \sigma(X)^{q(x)}\left(\frac{\sigma\left(Q_j^k\right)}{\sigma(X)}\right)^{q(x)} \mu {\left( {Q_j^k} \right)^{(\eta  - 1)q(x)}}\omega {(x)^{q(x)}}d \mu \\
& \leq\left(\sigma(X)^{q_{+}}+\sigma(X)^{q_{-}}\right) \sum_{k \geq k_0, j} \int_{E_j^k}\left(\frac{\sigma\left(Q_j^k\right)}{\sigma(X)}\right)^{q_{-}\left(Q_j^k\right)} \mu {\left( {Q_j^k} \right)^{(\eta  - 1)q(x)}}\omega {(x)^{q(x)}}d \mu\\
& \lesssim \left(\sigma(X)^{q_{+}}+\sigma(X)^{q_{-}}\right)\left(\frac{1}{\sigma(X)^{q_+}}+\frac{1}{\sigma(X)^{q_{-}}}\right) \sum_{k \geq k_0, j} \sigma\left(E_j^k\right)^{\frac{{{q_ - }(Q_j^k)}}{{{p_ - }(Q_j^k)}}} \\
& \le \frac{\left(\sigma(X)^{q_{+}}+\sigma(X)^{q_{-}}\right)^2}{\sigma(X)^{q_{+}+q_{-}}} \sum\limits_{\theta  = 1,\frac{{{q_ + }}}{{{p_ - }}}} {\sum\limits_{k \ge {k_0},j} { {\sigma {{\left( {E_j^k} \right)}^\theta }} } } \\
& \le\frac{\left(\sigma(X)^{q_{+}}+\sigma(X)^{q_{-}}\right)^2}{\sigma(X)^{q_{+}+q_{-}}}\sum\limits_{\theta  = 1,\frac{{{q_ + }}}{{{p_ - }}}} {{{\left( {\sum\limits_{k \ge {k_0},j} {\sigma \left( {E_j^k} \right)} } \right)}^\theta }} \\
& \le \frac{{{{\left( {\sigma {{(X)}^{{q_ + }}} + \sigma {{(X)}^{{q_ - }}}} \right)}^2}}}{{\sigma {{(X)}^{{q_ + } + {q_ - }}}}}\sum\limits_{\theta  = 1,\frac{{{q_ + }}}{{{p_ - }}}} {\sigma {{(X)}^\theta }}.
\end{align*}
We accomplish the estimate for $i=2$ and finish the proof of sufficiency for $\mu(X)<\infty$.

\vspace{1cm}
\noindent{\bf Acknowledgements } 
The author would like to thank the editors and reviewers for careful reading and valuable comments, which lead to the improvement of this paper. 

\medskip 

\noindent{\bf Data Availability} Our manuscript has no associated data.



\end{document}